\theoremstyle{plain}
\newtheorem{thm}{Theorem}[section]
\newtheorem{cor}[thm]{Corollary}
\newtheorem{lem}[thm]{Lemma}
\newtheorem{prop}[thm]{Proposition}
\theoremstyle{definition}
\theoremstyle{remark}
\newtheorem{rem}[thm]{Remark}
\newtheorem{ex}[thm]{Example}
\newtheorem{qu}[thm]{Question}
\newcommand{\A}{\mathcal{A}}
\newcommand{\E}{\mathcal{E}}
\newcommand{\U}{\mathcal{U}}
\newcommand{\B}{\mathcal{B}}
\newcommand{\F}{\mathcal{F}}
\newcommand{\mset}{\emptyset}
\newcommand{\pr}{\prime}
\begin{document}
\baselineskip=18pt
\title{A Selection Theorem for Banach Bundles and Applications}
\author{Aldo J. Lazar}
\address{School of Mathematical Sciences\\
        Tel Aviv University\\
        Tel Aviv 69978, Israel}
\email{aldo@post.tau.ac.il}

\date{April 17, 2016}

\subjclass{Primary 46B20; Secondary 55R65, 58B05}

\keywords{Banach bundles, Banach bundle maps, spaces of sections of Banach bundles, lower semi-continuous set-valued maps}

\begin{abstract}

   It is shown that certain lower semi-continuous maps from a paracompact space to the family of closed subsets of the bundle space of a Banach
   bundle admit continuous selections. This generalization of the the theorem of Douady, dal Soglio-Herault, and Hofmann on the fullness of
   Banach bundles has applications to establishing conditions under which the induced maps between the spaces of sections of Banach bundles
   are onto. Another application is to a generalization of the theorem of Bartle and Graves \cite{BG} for Banach bundle maps that are onto their
   images. Other applications of the selection theorem are to the study of the M-ideals of the space of bounded sections begun in \cite{Be} and continued
   in \cite{G}. A
   class of Banach bundles that generalizes the class of trivial Banach bundles is introduced and some properties of these Banach bundles are
   discussed.

\end{abstract}
\maketitle

\newpage
\section{Introduction and basic definitions and notations} \label{S:intro}

The theory of Banach bundles, sometimes presented as continuous fields of Banach spaces, helped to advance the study of unitary representations
of locally compact groups and of $C^*$-algebras. An early example of its usefulness and interest in these fields can be found in \cite{F} and
for later developments we mention \cite{DF}. Other beneficial connections of this theory are with the theory of Banach lattices. A short
exposition on this aspect is given in the comprehensive monograph of Gierz \cite{G} and more recent results appeared in \cite{Gut1}. The theory
of Banach bundles provides an association of topology, the geometry of Banach spaces and operator theory, as exemplified in \cite[Chapters 15
and 16]{G} and \cite{GK}.

We prove in this paper a generalization of Michael's well known selection theorem, \cite[Theorem 3.2'']{M}, in the context of Banach bundles.
This result, Theorem \ref{T:selection}, also generalizes the theorems of Douady, L. dal Soglio-Herault, and K. H. Hofmann, see \cite[Appendix
C]{DF}, on the existence of sufficiently many continuous cross-sections in a Banach bundle. A particular case of Theorem \ref{T:selection} is
\cite[Proposition 15.13]{G}.

In Section \ref{S:maps} we consider Banach bundle maps (definition will follow) and the induced maps between the spaces of sections. We apply
the selection theorem to establishing some conditions under which these later maps are onto their images. This investigation led to examining
when a map between two Banach bundles is open. The section ends with the generalization of a theorem of Bartle and Graves on the existence of
continuous right inverses for maps between Banach spaces. Here the context is of course maps between Banach bundles and we follow Michael
\cite{M} in using the selection theorem for deriving the existence of such an inverse.

Alfsen and Effros developed in \cite{AE} a structure theory for Banach spaces in which certain subspaces named M-ideals play a crucial role. The
M-ideals of the Banach space of sections of a Banach bundle whose base space is compact Hausdorff were investigated in depth by Behrends
\cite{Be} and Gierz \cite{G}. In Section \ref{S:M-ideals} we extend this investigation to the case when the base space is locally compact
Hausdorff and the sections vanish at infinity.

A certain class of continuous Banach bundles is introduced in Section \ref{S:uniform}; the bundles in this class are called locally uniform.
Every locally trivial Banach bundle is locally uniform but the converse is false. The locally uniform Banach bundles enjoy some nice properties.
For instance we show in Proposition \ref{P:qcont2} that the quotient of a locally uniform Banach bundle by a locally uniform Banach subbundle is a
continuous Banach bundle.

Some topological properties of the bundle space are treated in the Appendix. Conditions which insure that the bundle space is a Baire space are
given. Also its paracompactness and its metrizability are discussed there.

Throughout of this paper the field of scalars is denoted by $\mathbb{K}$; it can be the field of the real numbers or the field of the complex
numbers. A paracompact space is always considered to be Hausdorff. A topological space is locally paracompact if it has a cover with open
subsets whose closures are paracompact. A map $\Phi$ from a topological space $T_1$ to the family of subsets of a topological space $T_2$ is
called lower semi-continuous if for every open non-void subset $O$ of $T_2$ the set $\{t\in T_1\mid O\cap \Phi(t)\neq \mset\}$ is open in $T_1$.

For a Banach space $X$, $x\in X$, and $r$ a positive number we denote $B(x,r) := \{y\in X\mid \|y - x\| < r\}$ and by $X_r$ the closed ball of
$X$ whose center is at the origin with radius $r$. A closed subspace $Y$ of the Banach space $X$ is called an M-ideal of $X$ if its polar $Z_1$
in the dual of $X$ has a closed complement $Z_2$ such that $\|z_1 + z_2\| = \|z_1\| + \|z_2\|$ for every $z_i\in Z_i$, $i = 1,2$. A closed
subspace $Y$ of $X$ is an M-ideal if and only if ithas the 3-ball property: if $\cap_{i=1}^3 B(x_i,r_i)\neq \mset$ and $B(x_i,r_i)\cap Y\neq
\mset$, $1\leq i\leq 3$, then $Y\cap \cap_{i=1}^3 B(x_i,r_i)\neq \mset$.

By a Banach bundle $\xi := (\E,p,T)$ we shall mean what in \cite[pp. 8,9]{DG} is called an (H)Banach bundle. That is $x\to \|x\|$ is upper
semi-continuous on the bundle space $\E$, see also \cite[p.21]{G}. We shall always suppose that the base space $T$ is Hausdorff. The fiber
$p^{-1}(t)$ over $t\in T$ will be sometimes denoted $\E(t)$. The origin of the Banach space $\E(t)$ is denoted $0_t$ but if there can be no
confusion just $0$ will be used. The bundle $\xi$ is called a continuous Banach bundle if $x\to \|x\|$ is continuous on $\E$. The bundle space
of a continuous Banach bundle is always Hausdorff, see \cite[Proposition 16.4]{G}. If $\E^{\pr}$ is a subset of $\E$ such that
$p\mid_{\E^{\pr}}$ is onto $T$ and open and each $p^{-1}(t)\cap \E^{\pr}$ is a closed subspace of $\E(t)$ then $\xi^{\pr} :=
(\E^{\pr},p\mid_{\E^{\pr}},T)$ is a Banach bundle called a Banach subbundle of $\xi$. One can define an equivalence relation on $\E$ as follows:
$x_1,x_2$ are equivalent if $p(x_1) = p(x_2)$ and $x_1 - x_2$ belongs to $p^{-1}(p(x_1))$. The quotient space, denoted $\E/\E^{\pr}$, is the
union of all the quotient Banach spaces $\E(t)/\E^{\pr}(t)$, $t\in T$. If we denote by $\tilde{p}$ the obvious map of $\E/\E^{\pr}$ onto $T$
then $\eta := (\E/\E^{\pr},\tilde{p},T)$ is a Banach bundle by \cite[Chapter 9]{G}. The quotient map $q : \E\to \E/\E^{\pr}$ is open, see
\cite[9.4]{G}.

Let $\xi := (\E,p,T)$ be a Banach bundle. A continuous function $f : T\to \E$ is called a section of $\xi$ if $p(f(t)) = t$ for every $t\in T$.
The linear space of all the sections of $\xi$ is denoted $\Gamma(\xi)$; its subspace of all the bounded sections is denoted $\Gamma_b(\xi)$.
This is a Banach space, the norm of $f\in \Gamma_b(\xi)$ being $\|f\| := \sup_{t\in T} \|f(t)\|$. If $T$ is locally compact Hausdorff then the
subspace $\Gamma_0(\xi)$ of $\Gamma_b(\xi)$ consisting of all the sections that vanish at infinity is of interest.

A full Banach bundle is a bundle such that for each $x\in \E$ there exists a section $f\in \Gamma(\xi)$ satisfying $f(p(x)) = x$. In this
definition one can replace $\Gamma(\xi)$ by $\Gamma_b(\xi)$ as observed in \cite[p. 14]{DG} and even by $\Gamma_0(\xi)$ if $T$ is locally
compact. For $f\in \Gamma(\xi)$, $V$ an open subset of $T$, and $a > 0$ the open subset
\[
 U(f,V,a) := \{x\in \E\mid p(x)\in V, \ \|x - f(p(x))\| < a\}
\]
of $\E$ is called a tube. If $\xi$ is a full Banach bundle then the family of all the tubes forms a base for the topology of $\E$ by \cite[p.
10]{DG}.

A Banach bundle map from $\xi_1 := (\E_1,p_1,T)$ to $\xi_2 := (\E_2,p_2,T)$ is a continuous map $\varphi : \E_1\to \E_2$ such that
$\varphi(p_1^{-1}(t))\subset p_2^{-1}(t)$ for every $t\in T$ and the restriction of $\varphi$ to each fiber $p_1^{-1}(t)$ is linear. The map
$\varphi$ is called an (isometric) isomorphism if $\varphi(\E_1) = \E_2$ and $\varphi$ is an isometry on every fiber.

\section{The selection theorem} \label{S:selection}

In this section we shall state and prove the selection theorem for set valued maps into a Banach bundle and we shall discuss some of its
applications. The proof mimics closely the proofs given in \cite[Appendix C]{DF} and \cite{M}. A sequence of lemmas prepares the proof of the
theorem itself.

From now on in this section $\xi := (\E,p,T)$ is a fixed Banach bundle. Following \cite[Appendix C]{DF} we shall say that a subset $U$ of $\E$
is $\varepsilon$-thin, $\varepsilon > 0$, if $\|x - x^{\pr}\| < \varepsilon$ whenever $x, x^{\pr} \in \E$ and $p(x) = p(x^{\pr})$.

\begin{lem} \label{L:seg}

   Let $t\in T$, $x_1, x_2\in p^{-1}(t)$ with $\|x_1 - x_2\| < \varepsilon$. There is an open $\varepsilon$-thin set $U$ that contains the
   segment $[x_1,x_2]$.

\end{lem}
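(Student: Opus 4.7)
My plan is to apply Wallace's theorem (the tube lemma for two compact factors) to the product space $\E \times \E$, after observing that the $\varepsilon$-thin condition encodes an open neighborhood of $[x_1, x_2] \times [x_1, x_2]$ inside the fibered product $\E *_T \E := \{(z, z') \in \E \times \E : p(z) = p(z')\}$.

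First I check that the set
\[
O := \{(z, z') \in \E *_T \E : \|z - z'\| < \varepsilon\}
\]
is open in $\E *_T \E$, using that fiber subtraction $(z,z') \mapsto z - z'$ is continuous on $\E *_T \E$ (from the continuity of fiber addition and scalar multiplication in $\xi$) and that $\|\cdot\|$ is upper semi-continuous on $\E$. The segment $[x_1, x_2]$ is compact in $\E$, being the continuous image of $[0,1]$ under $s \mapsto (1-s)x_1 + sx_2$, and $[x_1, x_2] \times [x_1, x_2]$ lies inside $O$ since any two of its points lie in $\E(t)$ and differ by at most $\|x_1 - x_2\| < \varepsilon$.

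Next I extend $O$ to an open subset of the full product $\E \times \E$. Since $T$ is Hausdorff, its diagonal $\Delta_T$ is closed, so $\E *_T \E = (p \times p)^{-1}(\Delta_T)$ is closed in $\E \times \E$. Writing $O = O' \cap (\E *_T \E)$ for some $O'$ open in $\E \times \E$ (by the definition of the subspace topology), I set
\[
\widetilde{O} := O' \cup \bigl(\E \times \E \setminus \E *_T \E\bigr),
\]
which is open in $\E \times \E$, contains $[x_1, x_2] \times [x_1, x_2]$, and satisfies $\widetilde{O} \cap (\E *_T \E) = O$.

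Finally, I apply Wallace's theorem: the compact set $[x_1, x_2] \times [x_1, x_2]$ lies inside the open set $\widetilde{O} \subset \E \times \E$, so there exist open $V_1, V_2 \subset \E$ each containing $[x_1, x_2]$ with $V_1 \times V_2 \subset \widetilde{O}$. Setting $U := V_1 \cap V_2$ yields an open neighborhood of $[x_1, x_2]$ in $\E$ with $U \times U \subset \widetilde{O}$, whence $(U \times U) \cap (\E *_T \E) \subset O$, which is precisely the $\varepsilon$-thinness of $U$. The only delicate step is the extension of $O$ from the fibered product to the full product in the previous paragraph; this is where the Hausdorff assumption on $T$ enters, ensuring that $\E *_T \E$ is closed and that its complement provides the open padding needed to invoke Wallace's theorem in the unrestricted product.
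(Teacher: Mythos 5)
Your proposal is correct and follows essentially the same route as the paper: both realize the $\varepsilon$-thin condition as an open neighborhood of $[x_1,x_2]\times[x_1,x_2]$ in the fibered product $D=\{(z,z')\mid p(z)=p(z')\}$ (using upper semi-continuity of the norm), extend it to an open set in $\E\times\E$, and invoke the Wallace/tube lemma for the compact square. Your extra padding of $O'$ by the complement of the closed set $\E*_T\E$ is harmless but not needed, since $U\times U\subset O'$ already gives $(U\times U)\cap D\subset O'\cap D=O$; otherwise the argument matches the paper's.
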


\begin{proof}

   With $D := \{(x,y)\in \E\times \E \mid p(x) = p(y)\}$, the set $\{(x,y) \mid \|x - y\| < \varepsilon\}$ is open since $(x,y)\to \|x - y\|$ is
   upper semi-continuous. There is an open subset $O$ of $\E\times \E$ such that
   \[
    [x_1,x_2]\times [x_1,x_2]\subset \{(x,y) \mid \|x - y\|\} = O\cap D.
   \]
   By \cite[p. 171]{Mu} there is an open set $U\subset \E$ such that
   \[
    [x_1,x_2]\times [x_1,x_2]\subset U\times U\subset O.
   \]
   Obviously $U$ is $\varepsilon$-thin.

\end{proof}

We enlarge now our setting with a completely regular topological space $S$ and a continuous open map $\pi$ of $S$ onto $T$. A function $f : S\to
\E$ is called admissible if $p\circ f = \pi$. Let $\varepsilon > 0$. A function $f :S\to \E$ is called $\varepsilon$-continuous at $s\in S$ if
there are a neighbourhood $V$ of $s$ and an $\varepsilon$-thin neighbourhood $U$ of $f(s)$ such that $f(V)\subset U$. If $f$ is
$\varepsilon$-continuous at all the points of $S$ then it is called shortly $\varepsilon$-continuous.

\begin{lem} \label{L:all}

   If an admissible function $f$ is $\varepsilon$-continuous for all $\varepsilon > 0$ then it is continuous.

\end{lem}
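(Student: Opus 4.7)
Let $s_0\in S$ be arbitrary and set $x_0:=f(s_0)$ and $t_0:=\pi(s_0)=p(x_0)$. To prove continuity at $s_0$, I would fix an arbitrary open neighborhood $W$ of $x_0$ in $\E$ and produce an open neighborhood $V$ of $s_0$ with $f(V)\subset W$. For each positive integer $n$, the $(1/n)$-continuity hypothesis at $s_0$ supplies an open neighborhood $V_n$ of $s_0$ together with a $(1/n)$-thin open set $U_n\subset\E$ containing $x_0$ such that $f(V_n)\subset U_n$. The $(1/n)$-thinness of $U_n$ already forces the $t_0$-slice $U_n\cap p^{-1}(t_0)$ to lie inside the open ball $B(x_0,1/n)$ of the fiber $\E(t_0)$.

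The essential step is the following reduction: for some $n$ there exists an open neighborhood $V_0$ of $t_0$ in $T$ such that $U_n\cap p^{-1}(V_0)\subset W$. Granted this, set $V:=V_n\cap\pi^{-1}(V_0)$; by continuity of $\pi$ this is an open neighborhood of $s_0$, and admissibility gives, for every $s\in V$, both $f(s)\in U_n$ and $p(f(s))=\pi(s)\in V_0$, so $f(s)\in U_n\cap p^{-1}(V_0)\subset W$, as required.

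To establish the essential step I would argue by contradiction. Assuming it fails, one can pick for each pair $(n,V)$ with $V$ an open neighborhood of $t_0$ a witness $y_{n,V}\in (U_n\cap p^{-1}(V))\setminus W$. Ordering $(n,V)$ in the standard product way yields a net $(y_\gamma)$ outside $W$ with $p(y_\gamma)\to t_0$ and with $y_\gamma$ eventually in each $U_m$. The goal is then to show that nevertheless $y_\gamma\to x_0$ in $\E$, contradicting openness of $W$. The convergence will be extracted from the H-Banach bundle axioms: the upper semi-continuity of the norm, continuity of subtraction on the fiber product $\E\times_T\E$, and the fact that $\|w_\gamma\|\to 0$ with $p(w_\gamma)\to t_0$ forces $w_\gamma\to 0_{t_0}$.

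The main obstacle is exactly this convergence argument, because without a continuous local section through $x_0$ (we do not yet have the selection theorem) one cannot simply translate the fiber-norm control from $\E(t_0)$ to nearby fibers. My plan to overcome it is to select, for each $\gamma=(n,V)$, a companion $z_\gamma\in U_n\cap p^{-1}(p(y_\gamma))$, so that $\|y_\gamma-z_\gamma\|<1/n$ by $(1/n)$-thinness; the openness of $U_n$ together with its thinness around $x_0$ should force the $z_\gamma$ to converge to $x_0$, and then fiberwise addition delivers $y_\gamma=(y_\gamma-z_\gamma)+z_\gamma\to 0_{t_0}+x_0=x_0$. Making the selection of $z_\gamma$ precise so that it genuinely converges to $x_0$ is where the bulk of the work lies.
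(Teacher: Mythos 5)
Your reduction is exactly the one the paper makes: continuity at $s_0$ follows once one knows that for every neighbourhood $W$ of $x_0=f(s_0)$ there are an $n$ and a neighbourhood $V_0$ of $t_0$ with $U_n\cap p^{-1}(V_0)\subset W$, and your passage from this to $f(V_n\cap\pi^{-1}(V_0))\subset W$ via admissibility is correct. The paper simply quotes this neighbourhood-base property from \cite[p. 10]{DG}; you attempt to prove it, and that is where the gap lies. Your plan is circular at the decisive point: you choose $z_\gamma\in U_n\cap p^{-1}(p(y_\gamma))$ and then hope that ``openness and thinness of $U_n$'' force $z_\gamma\to x_0$. But $y_\gamma$ itself is an admissible choice of $z_\gamma$, so ``$z_\gamma\to x_0$'' is literally the statement you are trying to prove; moreover $\varepsilon$-thinness only controls $\|u-u'\|$ for $u,u'$ in the \emph{same} fiber, so it gives no information about the position of $z_\gamma$ (which lies over $p(y_\gamma)\neq t_0$) relative to $x_0$. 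No selection of $z_\gamma$ can be shown to converge to $x_0$ from thinness alone. (A smaller point: to say $y_\gamma$ is eventually in each $U_m$ you need the $U_n$ decreasing; the paper arranges this by replacing $U_n$ with $U_1\cap\dots\cap U_n$.)

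The repair does not require $z_\gamma\to x_0$ at all. Use joint continuity of fiberwise addition at $(x_0,0_{t_0})$ to get open sets $A\ni x_0$ and $B\ni 0_{t_0}$ with $\{a+b\mid a\in A,\ b\in B,\ p(a)=p(b)\}\subset W$, together with the bundle axiom providing a neighbourhood $V_1$ of $t_0$ and $\varepsilon>0$ with $\{b\in\E\mid p(b)\in V_1,\ \|b\|<\varepsilon\}\subset B$. Pick $n$ with $1/n<\varepsilon$ and set $V_0:=V_1\cap p(U_n\cap A)$, which is open because $p$ is open. For $y\in U_n\cap p^{-1}(V_0)$ choose $z\in U_n\cap A$ with $p(z)=p(y)$; then $\|y-z\|<1/n<\varepsilon$ and $p(y-z)\in V_1$, so $y-z\in B$, while $z\in A$, whence $y=z+(y-z)\in W$. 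The point is that the companion $z$ only has to lie in the fixed neighbourhood $A$ chosen in advance, which is arranged by intersecting $U_n$ with $A$ before projecting --- no convergence of the companions is needed. With this replacement your argument closes, and it amounts to reproving the fact the paper cites from \cite{DG} rather than invoking it.
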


\begin{proof}

   Let $s\in S$ and and suppose that $V_n$ is a neighbourhood of $s$ and $U_n$ is an $1/n$-thin neighbourhood of $f(s)$, with
   $f(V_n)\subset U_n$, $n\in \mathbb{N}$. We have $f(V_n\cap V_{n+1})\subset U_n\cap U_{n+1}$ so there is no loss of generality if we suppose
   that $\{U_n\}$ is decreasing. Let now $W$ be a neighbourhood of $f(s)$. By \cite[p. 10]{DG} there are a natural number $n$ and a
   neighbourhood $V$ of $\pi(s)$ such that $U_n\cap p^{-1}(V)\subset W$. The neighbourhood $V\cap \pi(V_n)$ of $p(f(s)) = \pi(s)$ satisfies
   $U_n\cap p^{-1}(V\cap \pi(V_n))\subset W$. The neighbourhood $V_n\cap \pi^{-1}(V)$ os $s$ satisfies $f(V_n\cap \pi^{-1}(V))\subset W$.
   Indeed, if $s^{\pr}\in V_n\cap \pi^{-1}(V)\subset V_n$ then $f(s^{\pr})\in U_n$. Moreover, $p(f(s^{-1})) = \pi(s^{\pr})\in \pi(V_n)\cap V$;
   hence $f(s^{\pr})\in p^{-1}(\pi(V_n)\cap V)\subset p^{-1}(V)$. Thus $f(s^{\pr})\in U_n\cap p^{-1}(V)\subset W$ and the continuity of $f$ at
   $s$ is established.

\end{proof}

\begin{lem} \label{L:open}

   Let $G$ be an open subset of $S$ and $\{U_i\}_{i=1}^n$ be open subsets of $\E$ such that $p(U_i) = \pi(G)$, $1\leq i \leq n$. Suppose that
   $\{\varphi_i\}_{i=1}^n$ are continuous scalar functions on $G$ with $\varphi_1(s)\neq 0$ for each $s\in G$. Then the set
   \[
    W := \{\sum\varphi_i(s)x_i \mid s\in G, x_i\in U_i, p(x_i) = \pi(s), 1\leq i\leq n\}
   \]
   is open in $\E$.

\end{lem}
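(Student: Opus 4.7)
Given $y_0\in W$, write $y_0 = \sum_{i=1}^n\varphi_i(s_0)x_i^0$ with $s_0\in G$, $x_i^0\in U_i$, and set $t_0 := \pi(s_0) = p(y_0)$. The aim is to produce an open neighbourhood of $y_0$ in $\E$ contained in $W$; the hypothesis $\varphi_1(s_0)\neq 0$ will be used to solve for the $x_1$-component when lifting a nearby point of $\E$ back into $W$.

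By the Douady--dal Soglio-Herault--Hofmann theorem mentioned in the introduction, $\xi$ is full, so one can choose global sections $\sigma_i$ with $\sigma_i(t_0) = x_i^0$, together with open neighbourhoods $V_i$ of $t_0$ and numbers $\delta_i>0$ such that the tube $U(\sigma_i,V_i,\delta_i)$ sits inside $U_i$. Define the section $\rho := \sum_i\varphi_i(s_0)\sigma_i$ through $y_0$. Using continuity of the $\varphi_i$ and $\varphi_1(s_0)\neq 0$, pick a neighbourhood $G_0\subset G$ of $s_0$ on which $|\varphi_i - \varphi_i(s_0)|<\eta$ for all $i$ and $|\varphi_1|>c := |\varphi_1(s_0)|/2$; upper semi-continuity of the norm yields a neighbourhood of $t_0$ on which $\|\sigma_i(\cdot)\|\leq M$ for some constant $M$. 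Since $\pi$ is open, $\pi(G_0)$ is a neighbourhood of $t_0$; let $V$ be the intersection of $\pi(G_0)$, all the $V_i$, and this norm-bounding neighbourhood, and fix $\eta,\varepsilon>0$ with $c^{-1}(\varepsilon + n\eta M)<\delta_1$. The plan is to show that the open tube $U(\rho,V,\varepsilon)$ lies inside $W$.

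For $z\in U(\rho,V,\varepsilon)$ with $t := p(z)\in V$, pick any $s\in G_0$ with $\pi(s) = t$, set $x_i := \sigma_i(t)\in U_i$ for $i\geq 2$, and define $x_1 := \varphi_1(s)^{-1}\bigl(z - \sum_{i\geq 2}\varphi_i(s)\sigma_i(t)\bigr)$. Direct substitution yields $z = \sum_i\varphi_i(s)x_i$ with $p(x_i) = t = \pi(s)$, and the estimate
\[
\|x_1-\sigma_1(t)\| = |\varphi_1(s)|^{-1}\bigl\|z-\textstyle\sum_i\varphi_i(s)\sigma_i(t)\bigr\| \leq c^{-1}\bigl(\|z-\rho(t)\| + n\eta M\bigr) < \delta_1
\]
shows $x_1\in U(\sigma_1,V_1,\delta_1)\subset U_1$, so $z\in W$. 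The main difficulty is juggling three independent perturbations simultaneously: the fibre $p(z)$ must lie in $\pi(G_0)$ (openness of $\pi$), the section values $\sigma_i(t)$ must stay norm-bounded (upper semi-continuity of the norm), and the scalars $\varphi_i(s)$ must stay close to $\varphi_i(s_0)$ with $\varphi_1(s)$ bounded away from zero.
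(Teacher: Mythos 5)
There is a genuine gap: your construction of the sections $\sigma_i$ through the points $x_i^0$ invokes the Douady--dal Soglio-Herault--Hofmann fullness theorem, and at this point in the paper that theorem is neither available nor permissible. It is not available because the only standing hypotheses in force for Lemma \ref{L:open} are that $T$ is Hausdorff and $S$ is completely regular --- paracompactness of $S$ is introduced only after Lemma \ref{L:exist}, and neither paracompactness nor local compactness of $T$ is ever assumed in this section, whereas the fullness theorem requires one of these on the base space. It is not permissible because this lemma is an ingredient of Theorem \ref{T:selection}, from which the paper deduces (via Corollary \ref{C:ext}) precisely the fullness result you are quoting; invoking it here makes the development circular. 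The same objection applies to your use of the tubes $U(\sigma_i,V_i,\delta_i)$ as neighbourhoods inside the $U_i$: the paper only asserts that tubes form a base of the topology of $\E$ once $\xi$ is known to be full.

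The intended argument needs no sections at all. The paper works on the fibered product $Z := \{(s,x_1,\ldots,x_n)\mid s\in G,\ x_i\in\E,\ p(x_i)=\pi(s)\}$ with its relative topology, observes that $\alpha(s,x_1,\ldots,x_n) := (s,\sum\varphi_i(s)x_i,x_2,\ldots,x_n)$ is a homeomorphism of $Z$ onto itself (its inverse divides by $\varphi_1(s)$, which is exactly where the hypothesis $\varphi_1(s)\neq 0$ enters --- the same role it plays in your formula for $x_1$), and that the projection $\beta(s,x_1,\ldots,x_n):=x_1$ is open because $p$ and $\pi$ are open maps; then $W=\beta(\alpha((G\times U_1\times\cdots\times U_n)\cap Z))$ is open. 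Your analytic estimates are sound as far as they go, but the proof must be rebuilt on a foundation that does not presuppose the existence of continuous sections through prescribed points.
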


\begin{proof}

   Consider the set
   \[
    Z := \{(s,x_1\ldots, x_n) \mid s\in G, x_i\in \E, p(x_i) = \pi(s), 1\leq i\leq n\}
   \]
   endowed with the relative topology inherited from $G\times p^{-1}(\pi(G))^n$. A basis for this topology consists of all the sets of the form
   $O := (V\times V_1\times \dots V_n)\cap Z$ where $V\subset G$, $V_i\subset p^{-1}(\pi(G))$, $1\leq i\leq n$, are open. Thus
   \begin{equation} \label{E:basis}
    O := \{(s,x_1,\ldots x_n)\mid s\in V, x_i\in V_i, p(x_i) = \pi(s), 1\leq i\leq n\}.
   \end{equation}
   The map $\alpha$ of $Z$ onto itself given by $\alpha(s,x_1,\ldots x_n) := (s,\sum\varphi_i(s)x_i, x_2,\ldots x_n)$ is a homeomorphism. Indeed
   it is continuous and its inverse maps $(s,y_1,\ldots y_n)$ to $(s,1/\varphi_1(s)(y_1-\sum_{i=2}^n\varphi_i(s)y_i),y_2,\ldots y_n)$. The map
   $\beta$ of $Z$ into $\E$ given by $\beta(s,x_1,\ldots x_n) := x_1$ is open. Indeed, if $O$ is as in~\eqref{E:basis} then
   \[
    \beta(O) = V_1\cap p^{-1}(\pi(V)\cap \cap_{i=1}^n p(V_i))
   \]
   that is open. Now $W = \beta(\alpha((G\times U_1,\ldots U_n)\cap Z))$ is obviously open.

\end{proof}

\begin{lem} \label{L:comb}

   Suppose $\varphi_i$, $1\leq i\leq n$, are continuous functions from $S$ to $[0,1]$ with $\sum\varphi_i = 1$. Let $\varepsilon > 0$. If
   $f_i$, $1\leq i\leq n$ are admissible and $\varepsilon$-continuous at $s_0\in S$ then $f := \sum\varphi_if_i$ is admissible and
   $\varepsilon$-continuous at $s_0$.

\end{lem}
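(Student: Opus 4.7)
The admissibility of $f$ is immediate: each $f_i(s)$ lies in the Banach space $p^{-1}(\pi(s))$, hence the combination $f(s) = \sum_i \varphi_i(s) f_i(s)$ lies there too, so $p(f(s)) = \pi(s)$.

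For the $\varepsilon$-continuity at $s_0$, my plan is to apply Lemma~\ref{L:open} with the \emph{constant} weights $\varphi_i(s_0)$ to produce the required $\varepsilon$-thin open neighborhood of $f(s_0)$. Pick witnesses $V_i, U_i$ of the $\varepsilon$-continuity of each $f_i$ at $s_0$, set $V_0 := \bigcap_i V_i$, and, after relabeling so that $\varphi_1(s_0) > 0$, shrink $V_0$ so that $\varphi_1 > 0$ throughout $V_0$. Replace each $U_i$ by $U_i \cap p^{-1}(\pi(V_0))$; this preserves openness, $\varepsilon$-thinness and the containment $f_i(V_0) \subset U_i$, while forcing $p(U_i) = \pi(V_0)$. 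Applying Lemma~\ref{L:open} with $G = V_0$ and the constant continuous functions $\varphi_i(s_0)$ (so that $\varphi_1(s_0) \neq 0$) yields that
\[
W := \Bigl\{\sum_i \varphi_i(s_0)\, x_i : x_i \in U_i,\; p(x_1) = \cdots = p(x_n) \in \pi(V_0)\Bigr\}
\]
is open in $\E$. For $y = \sum \varphi_i(s_0)\, x_i$ and $y' = \sum \varphi_i(s_0)\, x'_i$ in $W$ lying in a common fiber, the $\varepsilon$-thinness of each $U_i$ gives $\|x_i - x'_i\| < \varepsilon$, whence the partition-of-unity property yields $\|y - y'\| \le \sum_i \varphi_i(s_0)\|x_i - x'_i\| < \varepsilon$, so $W$ is $\varepsilon$-thin. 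Evidently $f(s_0) \in W$.

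It remains to produce a neighborhood $V \subset V_0$ of $s_0$ with $f(V) \subset W$. For $s \in V_0$ the equation $f(s) = \sum_i \varphi_i(s_0)\, x_i$ (in the fiber over $\pi(s)$) can be solved by taking $x_i := f_i(s)$ for $i \ge 2$ and
\[
x_1 := f_1(s) + \frac{1}{\varphi_1(s_0)} \sum_i \bigl(\varphi_i(s) - \varphi_i(s_0)\bigr) f_i(s),
\]
which reduces to $f_1(s_0) \in U_1$ at $s = s_0$. The main obstacle is to verify $x_1 \in U_1$ for $s$ in a sufficiently small neighborhood $V$ of $s_0$: the perturbation $x_1 - f_1(s)$ is a fiber element of norm at most $\varphi_1(s_0)^{-1} \sum_i |\varphi_i(s) - \varphi_i(s_0)|\,\|f_i(s)\|$, so continuity of the $\varphi_i$ together with continuity of addition in $\E$ will push $x_1$ into the open set $U_1$ for $s$ close enough to $s_0$---provided the $\|f_i(s)\|$ are locally bounded near $s_0$. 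Securing this local boundedness is the delicate point, since it is not a direct consequence of $\varepsilon$-continuity alone; the standard fix is to refine each $U_i$ in advance by intersecting with the open set $\{x \in \E : \|x\| < \|f_i(s_0)\| + 1\}$ (open by upper semi-continuity of the norm), and to check that the $\varepsilon$-continuity witness persists after this refinement.
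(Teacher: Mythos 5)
Your overall architecture matches the paper's: use Lemma \ref{L:open} to manufacture an open $\varepsilon$-thin set $W$ containing $f(s_0)$ together with a neighbourhood $V$ of $s_0$ satisfying $f(V)\subset W$. But you freeze the coefficients at $s_0$, and that is where the argument breaks. With the constant-coefficient set $W=\{\sum_i\varphi_i(s_0)x_i : x_i\in U_i,\ p(x_i)=\pi(s)\in\pi(V_0)\}$ the inclusion $f(V)\subset W$ is no longer automatic: you must solve $f(s)=\sum_i\varphi_i(s_0)x_i$ with $x_i\in U_i$, and your choice forces $x_1=f_1(s)+z(s)$ where $z(s)$ is a fiber element whose norm tends to $0$ as $s\to s_0$. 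Knowing $f_1(s)\in U_1$ and $\|z(s)\|$ small does not put $x_1$ into $U_1$: since $f_1$ is only $\varepsilon$-continuous, the points $f_1(s)$ need not converge to $f_1(s_0)$, and openness of $U_1$ provides no radius $r>0$, uniform over $s\in V$, such that the fiber ball of radius $r$ about $f_1(s)$ lies in $U_1$ (already for a tube $\{x : \|x-h(p(x))\|<\varepsilon/2\}$ around a section $h$, the available room at $f_1(s)$ is $\varepsilon/2-\|f_1(s)-h(\pi(s))\|$, which can shrink to $0$). Your proposed repair only addresses the local boundedness of $\|f_i(s)\|$, which is the harmless half of the problem --- and even there the step "check that the $\varepsilon$-continuity witness persists" is not innocent, because $\varepsilon$-continuity is an existential condition: a witness pair $(V_i,U_i)$ cannot in general be re-witnessed for a shrunken $U_i$. (Local boundedness does hold, but via thinness: any $z\in U_i$ in the fiber of $f_i(s)$ has $\|f_i(s)-z\|<\varepsilon$.) Nor can you enlarge the $U_i$ to absorb the perturbation, since adding a margin $\delta$ makes $W$ only $(\varepsilon+2\delta)$-thin, while the lemma needs exactly $\varepsilon$-continuity of $f$.

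The repair is to use Lemma \ref{L:open} at full strength; this is precisely why it is stated for variable continuous coefficients with $\varphi_1\neq 0$ on $G$. Choose $V\subset\bigcap_iV_i$ with $f_i(V)\subset U_i$ for all $i$ and $\varphi_1>0$ on $V$, replace each $U_i$ by $U_i\cap p^{-1}(\pi(V))$, and set
\[
W:=\Bigl\{\sum_i\varphi_i(s)x_i : s\in V,\ x_i\in U_i,\ p(x_i)=\pi(s)\Bigr\}.
\]
Then $W$ is open by Lemma \ref{L:open}, the inclusion $f(V)\subset W$ is immediate (take $x_i=f_i(s)$), and $W$ is $\varepsilon$-thin because two of its points in a common fiber are convex combinations, with the same weights, of points of the $\varepsilon$-thin sets $U_i$. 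This is the paper's proof.
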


\begin{proof}

   We have to prove only that $f$ is $\varepsilon$-continuous at $s_0$. There is no loss of generality if we suppose $\varphi_1(s_0)\neq 0$. There exist $\varepsilon$-thin open neighbourhoods $U_i$ of $f_i(s_0)$
   and open neighbourhoods $V_i$ of $s_0$ such that $f_i(V_i)\subset U_i$, $1\leq i\leq n$. By taking $\cap_{i=1}^n V_i$ and further reducing it if
   needed one gets an open neighbourhood $V$ of $s_0$ such that $f_i(V)\subset U_i$, $1\leq i\leq n$, and $\varphi(s) > 0$ if $s\in V$. From
   $f_i(V)\subset U_i$ and the admissibility of $f_i$ we get $f_i(V)\subset p^{-1}(\pi(V))\cap U_i$ and $p(p^{-1}(\pi(V))\cap U_i) = \pi(V)$,
   $1\leq i\leq n$. The set $p^{-1}(\pi(V))\cap U_i$ is an open neighbourhood of $f_i(s_0)$. The set
   \[
    W := \{\sum\varphi_i(s)x_i \mid s\in V, x_i\in p^{-1}(\pi(V))\cap U_i, p(x_i) = \pi(s), 1\leq i\leq n\}
   \]
   is open in $\E$ by Lemma \ref{L:open} and $f(s_0)\in W$. By the above $f(V)\subset W$ and it remains to show that $W$ is $\varepsilon$-thin. To
   this end, let $x^{\pr}, x^{\pr \pr}\in W$ with $p(x^{\pr}) = p(x^{\pr \pr}) = \pi(s)$ for some $s\in V$. Then $x^{\pr} =
   \sum\varphi_i(s)x_i^{\pr}$ with $x_i^{\pr}\in U_i$, $p(x_i^{\pr}) = \pi(s)$ and $x^{\pr \pr} = \sum\varphi_i(s)x_i^{\pr \pr}$ with
   $x_i^{\pr \pr}\in U_i$, $p(x_i^{\pr \pr}) = \pi(s)$, $1\leq i\leq n$. Each $U_i$ is $\varepsilon$-thin so $\|x^{\pr} - x^{\pr \pr}\| <
   \varepsilon$ and we conclude that $W$ is $\varepsilon$-thin.

\end{proof}

\begin{lem} \label{L:exist}

   Let $x_0\in \E$, $s_0\in S$ with $\pi(s_0) = p(x_0)$. There exists an $\varepsilon$-continuous admissible function $f : S\to \E$ such that
   $f(s_0) = x_0$.

\end{lem}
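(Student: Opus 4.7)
The plan is to produce $f$ by blending an essentially arbitrary pointwise selection into an $\varepsilon$-thin neighbourhood of $x_0$ with the zero section of $\xi$, using a continuous bump function on $S$.

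First, I would apply Lemma \ref{L:seg} to the degenerate pair $x_1 = x_2 = x_0$ (for which $\|x_1 - x_2\| = 0 < \varepsilon$) to extract an open $\varepsilon$-thin set $U \subset \E$ containing $x_0$. Intersecting $U$ with the open set $\{y \in \E \mid \|y\| < \|x_0\| + 1\}$ (open because the norm is upper semi-continuous) I arrange in addition that $\|y\| < M$ on $U$, where $M := \|x_0\| + 1$. Since $p$ is open on a Banach bundle, $V := p(U)$ is an open neighbourhood of $t_0 := p(x_0)$ in $T$, and so $V_0 := \pi^{-1}(V)$ is an open neighbourhood of $s_0$ in $S$. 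Complete regularity of $S$ then supplies a continuous $\varphi : S \to [0,1]$ with $\varphi(s_0) = 1$ and $\varphi \equiv 0$ on $S \setminus V_0$.

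Next, I use the axiom of choice to pick, for each $s \in V_0$, a point $\tilde y(s) \in U \cap p^{-1}(\pi(s))$ (nonempty since $\pi(s) \in V = p(U)$), insisting that $\tilde y(s_0) = x_0$; for $s \notin V_0$ I set $\tilde y(s) := 0_{\pi(s)}$. Then $\tilde y$ is admissible, and I define
\[
 f(s) := \varphi(s)\tilde y(s) + (1 - \varphi(s))\cdot 0_{\pi(s)} = \varphi(s)\tilde y(s),
\]
which is admissible and satisfies $f(s_0) = \tilde y(s_0) = x_0$.

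The remaining work is to verify $\varepsilon$-continuity of $f$ at every $s_1 \in S$. For $s_1 \in V_0$ the selection $\tilde y$ is $\varepsilon$-continuous at $s_1$, because $\tilde y(V_0) \subset U$ is $\varepsilon$-thin and $V_0$ is open; the zero section is continuous, and hence $\varepsilon$-continuous. After interchanging the roles of $\varphi$ and $1-\varphi$ if necessary so that the weight at $s_1$ is nonzero, Lemma \ref{L:comb} delivers $\varepsilon$-continuity of $f$ at $s_1$. For $s_1 \notin V_0$ one has $\varphi(s_1) = 0$, and continuity of $\varphi$ furnishes a neighbourhood $V$ of $s_1$ on which $\varphi(s) < \varepsilon/(2M)$; consequently $\|f(s)\| \leq \varphi(s) M < \varepsilon/2$ on $V$, so $f(V)$ is contained in the open $\varepsilon$-thin set $\{y \in \E \mid \|y\| < \varepsilon/2\}$, which contains $f(s_1) = 0_{\pi(s_1)}$.

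The main obstacle is the behaviour at the boundary $\partial V_0$: the raw selection $\tilde y$ will typically fail to be $\varepsilon$-continuous there, so Lemma \ref{L:comb} cannot be applied directly. The trick that makes the scheme work is to insist that $\varphi$ vanish on all of $S \setminus V_0$, which coerces $f$ itself to have small norm near $\partial V_0$ and lets the ball $\{\|y\| < \varepsilon/2\}$ play the role of the required $\varepsilon$-thin neighbourhood, bypassing any appeal to Lemma \ref{L:comb} at those points.
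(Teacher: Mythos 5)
Your proof is correct and takes essentially the same route as the paper's: both blend a raw pointwise selection into an $\varepsilon$-thin neighbourhood $U$ of $x_0$ with the zero section by means of a bump function, and both invoke Lemma \ref{L:comb} to get $\varepsilon$-continuity on the open set $\pi^{-1}(p(U))$. The only divergence is at the boundary of that set: the paper makes $\varphi$ vanish outside a closed set $\overline{W}$ contained in $\pi^{-1}(p(U))$, so that $f$ is identically zero on a neighbourhood of every boundary point, whereas you keep $\varphi$ supported on all of $\pi^{-1}(p(U))$ and instead control $\|f\|$ near the boundary by the norm bound $M$ on $U$ together with the smallness of $\varphi$ there; both devices are valid.
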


\begin{proof}

   Let $U$ be an $\varepsilon$-thin open neighbourhood of $x_0$ and set $V := \pi^{-1}(p(U))$ which is an open neighbourhood of $s_0$. Define $g
   : V\to \E$ so that $g(s_0) = x_0$ and $g(s)\in U$ with $p(g(s)) = \pi(s)$ for $s\in V$. Let $W$ be an open neighbourhood of $s_0$ such that
   $\overline{W}\subset V$ and $\varphi : S\to [0,1]$ a continuous function such that $\varphi(s_0) = 1$ and $\varphi\mid_{S\setminus
   \overline{W}} \equiv 0$. Set
   \begin{equation}
    f(s) :=
    \begin{cases}
       \varphi(s)g(s), &\text{if $s\in V$,}\\
       0, &\text{if $s\notin V$}.
    \end{cases}
   \end{equation}

Then on $V$, $f = \varphi g + (1 - \varphi)0$. Thus $f$ is $\varepsilon$-continuous on $V$ by Lemma \ref{L:comb} and trivially
$\varepsilon$-continuous on $S\setminus \overline{W}$. Clearly $f$ is an admissible function.

\end{proof}

From here on, until the end of the proof of Theorem \ref{T:selection}, $S$ is a paracompact space and $\Phi$ is a lower semi-continuous map from
$S$ to the family of non-void closed subsets of $\E$ such that $\Phi(s)$ is a convex subset of $p^{-1}(\pi(s))$ for every $s\in S$.

\begin{lem} \label{L:app}

   Given $s_0\in S$ and $\varepsilon > 0$ there are an $\varepsilon$-continuous admissible function $f$ and a neighbourhood $V$ of $s_0$ such
   that $B(f(s),\varepsilon)\cap \Phi(s)\neq \mset$ for every $s\in S$.

\end{lem}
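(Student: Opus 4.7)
The plan is to combine Lemma \ref{L:exist}, which produces $\varepsilon$-continuous admissible functions with a prescribed value, with the lower semi-continuity of $\Phi$ to get the required approximation near $s_0$. (The natural reading of the conclusion is that the inclusion $B(f(s),\varepsilon)\cap \Phi(s)\neq \mset$ holds for every $s\in V$; this is what I will prove.)

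First, since $\Phi(s_0)$ is non-void, choose any $x_0\in \Phi(s_0)\subset p^{-1}(\pi(s_0))$. Apply Lemma \ref{L:exist} to the pair $(x_0,s_0)$ to obtain an $\varepsilon$-continuous admissible function $f : S\to \E$ with $f(s_0) = x_0$. The $\varepsilon$-continuity of $f$ at $s_0$ produces an $\varepsilon$-thin open neighbourhood $U$ of $f(s_0) = x_0$ and an open neighbourhood $V_1$ of $s_0$ with $f(V_1)\subset U$.

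Next, since $x_0\in U\cap \Phi(s_0)$ and $U$ is an open subset of $\E$, the lower semi-continuity of $\Phi$ says that
\[
 V_2 := \{s\in S\mid U\cap \Phi(s)\neq \mset\}
\]
is an open neighbourhood of $s_0$. Set $V := V_1\cap V_2$, which is an open neighbourhood of $s_0$. For any $s\in V$, pick $y\in U\cap \Phi(s)$; since $y\in \Phi(s)\subset p^{-1}(\pi(s))$ we have $p(y) = \pi(s) = p(f(s))$, and $f(s)\in U$ because $s\in V_1$. The $\varepsilon$-thinness of $U$ then gives $\|f(s) - y\| < \varepsilon$, so $y\in B(f(s),\varepsilon)\cap \Phi(s)$, which is therefore non-empty.

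There is no real obstacle here: the only point requiring attention is that $\varepsilon$-thinness only controls pairs of elements with the same projection under $p$, which is exactly the reason we need both $f(s)$ and the witness $y\in \Phi(s)$ to lie in the same neighbourhood $U$ — and the admissibility of $f$ together with the containment $\Phi(s)\subset p^{-1}(\pi(s))$ is precisely what guarantees this compatibility.
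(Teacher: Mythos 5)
Your proof is correct and follows essentially the same route as the paper: choose $x_0\in\Phi(s_0)$, invoke Lemma \ref{L:exist}, intersect the neighbourhood coming from $\varepsilon$-continuity with the one coming from lower semi-continuity, and use $\varepsilon$-thinness of $U$. Your reading of the conclusion as holding for $s\in V$ (rather than all of $S$) matches what the paper's own proof actually establishes.
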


\begin{proof}

   Let $x_0\in \Phi(s_0)$ and $f$ be an $\varepsilon$-continuous admissible function such that $f(s_0) = x_0$ as given by Lemma \ref{L:exist}.
   There are an $\varepsilon$-thin neighbourhood $U$of $x_0$ and a neighbourhood $V_1$ of $s_0$ such that $f(V_1)\subset U$. The set $V_2 :=
   \{s\in S\mid U\cap \Phi(s)\neq \mset\}$ is an open neighbourhood of $s_0$. Set $V := V_1\cap V_2$. Then, for each $s\in V$ there is $x\in
   U\cap \Phi(s)$ and $\|f(s) - x\| < \varepsilon$ since $f(s)\in U$ and $U$ is an $\varepsilon$-thin set.

\end{proof}

Now we have reached the main approximation lemma. Its statement consists of two assertions with similar proofs.

\begin{lem} \label{L:app1}
   Let $\varepsilon > 0$.
   \begin{itemize}
      \item[(i)] There is an $\varepsilon$-continuous admissible function $g$ such that \linebreak $B(g(s),\varepsilon)\cap \Phi(s)\neq \mset$ for every
                 $s\in S$.
      \item[(ii)] Given an $\varepsilon$-continuous admissible function $f$ such that $B(f(s),\varepsilon)\cap \Phi(s)\neq \mset$ for every $s\in
      S$ there is an $\varepsilon/2$-continuous admissible function $g$ such that $\|f(s) - g(s)\| < 2\varepsilon$ and
      $B(g(s),\varepsilon/2)\cap \Phi(s)\neq \mset$ for every $s\in S$.
   \end{itemize}

\end{lem}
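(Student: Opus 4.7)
Both parts will be handled by the same local-to-global patching. At each $s_0\in S$ I will construct a \emph{local witness}: an admissible function $h_{s_0}$ of the required $\varepsilon'$-continuity (with $\varepsilon' = \varepsilon$ in (i) and $\varepsilon' = \varepsilon/2$ in (ii)) and an open neighbourhood $V_{s_0}$ on which the asserted conclusions hold. Paracompactness of $S$ then supplies a locally finite partition of unity $\{\varphi_i\}_{i\in I}$ subordinate to the cover $\{V_{s_0}\}$, and one sets $g := \sum_i \varphi_i h_i$. This sum is admissible (each $h_i$ is, and $\sum \varphi_i = 1$) and, by Lemma \ref{L:comb}, $\varepsilon'$-continuous everywhere since only finitely many $\varphi_i$ are non-zero near any point. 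The ball-intersection property passes through averaging because each $\Phi(s)$ is convex: picking $y_i \in B(h_i(s),\varepsilon')\cap \Phi(s)$ for each $i$ with $\varphi_i(s) > 0$, the convex combination $y := \sum \varphi_i(s)y_i$ lies in $\Phi(s)$ and $\|g(s) - y\| \leq \sum \varphi_i(s)\|h_i(s) - y_i\| < \varepsilon'$. The same averaging gives $\|f(s) - g(s)\| \leq \sum \varphi_i(s)\|f(s) - h_i(s)\| < 2\varepsilon$ in part (ii) as soon as each local $h_i$ satisfies $\|f(s) - h_i(s)\| < 2\varepsilon$ on $V_i$.

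For (i) the local witnesses are exactly what Lemma \ref{L:app} produces. For (ii) the local claim to establish is: given $s_0 \in S$, there exist an $\varepsilon/2$-continuous admissible $h$ and a neighbourhood $V$ of $s_0$ so that $\|f(s) - h(s)\| < 2\varepsilon$ and $B(h(s),\varepsilon/2)\cap \Phi(s)\neq \mset$ on $V$. To build $h$, pick $x_0 \in B(f(s_0),\varepsilon)\cap \Phi(s_0)$, and let $U^*$ (an $\varepsilon$-thin open neighbourhood of $f(s_0)$) and $V^*$ witness the $\varepsilon$-continuity of $f$ at $s_0$. Form the auxiliary set
\[
\wt M := \{z\in \E : \exists\, y\in U^*,\ p(y) = p(z),\ \|y - z\| < \varepsilon\},
\]
which contains $x_0$ via the witness $y = f(s_0)$, since $\|f(s_0) - x_0\| < \varepsilon$. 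Choose an $\varepsilon/2$-thin open neighbourhood $U^{**}\subset \wt M$ of $x_0$ and rerun the construction of Lemma \ref{L:exist} with this prescribed thin neighbourhood to obtain an $\varepsilon/2$-continuous admissible $h$ with $h(s_0) = x_0$ and $h(V^{**})\subset U^{**}$, where $V^{**} := \pi^{-1}(p(U^{**}))$. For $s \in V^* \cap V^{**}$, the containment $h(s)\in \wt M$ furnishes a $y_s \in U^*$ with $p(y_s) = \pi(s)$ and $\|y_s - h(s)\| < \varepsilon$; since $f(s)$ and $y_s$ both lie in $U^* \cap p^{-1}(\pi(s))$, $\varepsilon$-thinness of $U^*$ gives $\|f(s) - y_s\| < \varepsilon$, and the triangle inequality yields $\|f(s) - h(s)\| < 2\varepsilon$. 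Intersecting further with $\{s : U^{**}\cap \Phi(s) \neq \mset\}$—open by lower semi-continuity of $\Phi$ and containing $s_0$ because $x_0 \in U^{**}\cap \Phi(s_0)$—the $\varepsilon/2$-thinness of $U^{**}$ ensures any $y\in U^{**}\cap \Phi(s)$ also lies in $B(h(s),\varepsilon/2)$, delivering the second property.

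The main obstacle is verifying that $\wt M$ is open in $\E$, since this is what legitimises the choice $U^{**}\subset \wt M$ and, ultimately, converts the slack $\|f(s_0) - x_0\| < \varepsilon$ into the uniform $2\varepsilon$ bound. Given $z_0 \in \wt M$ with witness $y_0 \in U^*$, the upper semi-continuity of the norm on $D = \{(x,y)\in \E\times \E : p(x) = p(y)\}$ makes $\{(y,z)\in D : \|y - z\| < \varepsilon\}$ open in $D$, so there are basic open $O_1, O_2 \subset \E$ with $(y_0, z_0)\in O_1\times O_2$, $O_1 \subset U^*$, and $\|y - z\| < \varepsilon$ on $(O_1\times O_2)\cap D$. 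Replacing $O_2$ by $O_2 \cap p^{-1}(p(O_1))$—still an open neighbourhood of $z_0$ because $p$ is open and $p(z_0) = p(y_0)\in p(O_1)$—guarantees that every $z \in O_2$ has a partner $y \in O_1 \subset U^*$ in its own fibre with $\|y - z\| < \varepsilon$, so $z \in \wt M$, proving openness.
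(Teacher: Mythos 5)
Your global architecture --- local witnesses patched by a partition of unity, with convexity of $\Phi(s)$ carrying the ball condition through the averaging --- is exactly the paper's, and that part of your argument, together with the reduction of (i) to Lemma \ref{L:app}, is correct; the openness of $\widetilde M$ is also proved correctly. The gap is in the local step for (ii), at the sentence asserting that rerunning the construction of Lemma \ref{L:exist} with the prescribed thin neighbourhood $U^{**}$ yields $h(V^{**})\subset U^{**}$. The function that construction produces is $h=\varphi g$ with $g(V^{**})\subset U^{**}$ but with $\varphi$ a bump function that is only guaranteed to equal $1$ at the single point $s_0$ and to vanish off a set whose closure lies in $V^{**}$; thus $h(s)$ is a proper scalar multiple of an element of $U^{**}$ at points arbitrarily close to $s_0$, and equals $0_{\pi(s)}$ on the outer part of $V^{**}$, so $h(V^{**})\subset U^{**}$ fails, and with it the containment $h(s)\in\widetilde M$ on which both the $2\varepsilon$ bound and the $\varepsilon/2$ ball condition rest. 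What the $\varepsilon/2$-continuity of $h$ actually gives is containment in \emph{some} $\varepsilon/2$-thin neighbourhood $U'$ of $x_0$ over which you have no control; bridging from $U'$ to $U^*$ through $\widetilde M$ (via a point of $U'\cap\widetilde M$ in the fibre over $\pi(s)$) costs an extra $\varepsilon/2$ and only yields $\|f(s)-h(s)\|<5\varepsilon/2$.

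The gap is repairable in two ways. The cheap repair is to strengthen Lemma \ref{L:exist}: replace the bump function by one identically $1$ on a neighbourhood $W_0$ of $s_0$ (for instance $\min(1,2\varphi)$); then $h=g$ on $W_0$, so $h(W_0)\subset U^{**}\subset\widetilde M$, and your argument goes through after shrinking $V^{**}$ to $W_0$. The paper instead keeps Lemma \ref{L:exist} as stated and rebalances the error budget: it aims $h$ at the intermediate point $y=\tfrac{3}{4}x+\tfrac{1}{4}f(s_0)$, takes $h$ to be $\varepsilon/4$-continuous, and uses Lemma \ref{L:seg} to enclose the segments $[y,x]$ and $[y,f(s_0)]$ in $\varepsilon/4$- and $3\varepsilon/4$-thin sets, so the uncontrollable thin neighbourhood of $h(s_0)$ enters with weight $\varepsilon/4$ and the totals come out to $\varepsilon/4+\varepsilon/4=\varepsilon/2$ for the ball condition and $\varepsilon/4+3\varepsilon/4+\varepsilon=2\varepsilon$ for the distance to $f$.
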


\begin{proof}

   We begin with the proof of (ii). Given $f$ as above, the first step of the proof is to show that for every $s\in S$ there are an open
   neighbourhood $V$ and an $\varepsilon/2$-continuous admissible function $h$ such that $B(h(s^{\pr}),\varepsilon/2)\cap \Phi(s^{\pr})\neq \mset$
   and $\|f(s^{\pr}) - h(s^{\pr})\| < 2\varepsilon$ for each $s^{\pr}\in V$. Let $s\in S$ and choose $x\in B(f(s),\varepsilon)\cap \Phi(s)$. There
   is an $\varepsilon$-thin open neighbourhood $U_1$ of $f(s)$ and an open neighbourhood $V_1$ of $s$ such that $f(V_1)\subset U_1$. Let $y = 3/4x +
   1/4f(s)$. Lemma \ref{L:exist} yields an $\varepsilon/4$-continuous admissible function $h$ such that $h(s) = y$. We are going to show that $h$
   satisfies our claim.

   There are an $\varepsilon/4$-thin open neighbourhood $U_2$ of $y$ and an open neighbourhood $V_2$ of $s$ such that
   $h(V_2)\subset U_2$. By using Lemma \ref{L:seg} we get an $\varepsilon/4$-thin open set $U_3$ that contains the segment $[y,x]$ and an open
   $3\varepsilon/4$-thin set $U_4$ that contains the segment $[y,f(s)]$. Denote $V^{\pr} := \{s^{\pr}\mid U_3\cap \Phi(s^{\pr})\neq \mset\}$, an
   open neighbourhood of $s$ since $x\in U_3\cap \Phi(s)$. Set now
   \[
    V := \pi^{-1}(p(U_2\cap U_3\cap U_4))\cap \pi^{-1}(p(U_1\cap U_4))\cap V_1\cap V_2\cap V^{\pr}.
   \]
   We have $s\in \pi^{-1}(p(U_2\cap U_3\cap U_4))$ since $y\in U_2\cap U_3\cap U_4$ and $p(y) = p(f(s)) = \pi(s)$. Moreover, $f(s)\in U_1\cap
   U_4$ hence $s\in \pi^{-1}(p(U_1\cap U_4))$. We conclude that $V$ is an open neighbourhood of $s$. If $s^{\pr}\in V\subset V_1$ then $f(s^{\pr})\in U_1$
   and $\pi(s^{\pr})\in p(U_1\cap U_4)$. Thus there is $x_1\in U_1\cap U_4$ with $p(x_1) = \pi(s^{\pr})$ and
   \begin{equation} \label{E:first}
      \|f(s^{\pr}) - x_1\| < \varepsilon
   \end{equation}
   since $U_1$ is $\varepsilon$-thin. For $s^{\pr}\in V$ we have $\pi(s^{\pr})\in
   p(U_2\cap U_3\cap U_4)$ hence there is $x_2\in U_2\cap U_3\cap U_4$ such that $p(x_2) = \pi(s^{\pr})$. From $s^{\pr}\in V\subset V_2$ we get
   $h(s^{\pr})\in U_2$ and
   \begin{equation} \label{E:second}
      \|h(s^{\pr}) - x_2\| < \varepsilon/4.
   \end{equation}
   The set $U_4$ is $3/4\varepsilon$-thin so
   \begin{equation} \label{E:third}
      \|x_1 - x_2\| < 3/4\varepsilon.
   \end{equation}
   Finally, if $s^{\pr}\in V\subset V^{\pr}$ then there is $x_3\in U_3\cap \Phi(s^{\pr})$ and we have
   \begin{equation} \label{E:forth}
      \|x_2 - x_3\| < \varepsilon/4
   \end{equation}
   since $U_3$ is $\varepsilon/4$-thin. From~\eqref{E:second} and~\eqref{E:forth} we get
   \[
    \|h(s^{\pr}) - x_3\|\leq \|h(s^{\pr}) - x_2\| + \|x_2 - x_3\| < \varepsilon/2.
   \]
   Therefore, if $s^{\pr}\in V$ then $B(h(s^{\pr}),\varepsilon/2)\cap \Phi(s^{\pr})\neq \mset$. Also, if $s^{\pr}\in V$ we get from~\eqref{E:second}, \eqref{E:third} and~
   \eqref{E:first} that
   \[
    \|h(s^{\pr}) - f(s^{\pr})\|\leq \|h(s^{\pr}) - x_2\| + \|x_2 - x_1\| + \|x_1 - f(s^{\pr})\| < \varepsilon/4 + 3\varepsilon/4 + \varepsilon =
                                         2\varepsilon
   \]
   and the claim is established.

   As mentioned above, what has been done up to this point is irrelevant for (i). To obtain the proof of (i) from what follows one
   has to ignore every affirmation about the function $f$.

   We know now that there exist an open covering $\{V_{\alpha}\}_{\alpha\in \A}$ of $S$ and admissible $\varepsilon/2$-continuous functions
   $h_{\alpha} : S\to \E$ such that $B(h_{\alpha}(s),\varepsilon/2)\cap \Phi(s)\neq \mset$ and $\|f(s) - h_{\alpha}\| < 2\varepsilon$ for every $s\in
   V_{\alpha}$. For proving (i) one obtains this from Lemma \ref{L:app}. We may and shall suppose that the covering of $S$ is locally finite.
   Let $\{\varphi_{\alpha}\}$ be a partition of unity subordinated to $\{V_{\alpha}\}$. and define $g := \sum_{\alpha}
   \varphi_{\alpha}h_{\alpha}$. Each point $s\in S$ has a neighbourhood $O_s$ such that there exists a finite set $\F_s\subset \A$ with the
   property that $O_s\cap V_{\alpha} = \mset$ is $\alpha\notin \F_s$. Thus $g$ is well defined, admissible and $\varepsilon/2$-continuous by
   Lemma \ref{L:comb}. We have $B(g(s),\varepsilon/2)\cap \Phi(s)\neq \mset$ for every $s\in S$. Indeed, choose $z_{\alpha}\in
   B(h_{\alpha}(s),\varepsilon/2)\cap \Phi(s)$ for $\alpha\in \F_s$. Then $z := \sum_{\alpha\in \F_s} z_{\alpha}\in \Phi(s)$ and
   \[
    \|g(s) - z\|\leq \sum_{\alpha\in \F_s} \varphi_{\alpha}(s)\|h_{\alpha}(s) - z_{\alpha}\| < \varepsilon/2.
   \]
   Furthermore,
   \[
    \|g(s) - f(s)\|\leq \|\sum_{\alpha\in \F_s}\varphi_{\alpha}(s)h_{\alpha}(s) - \sum_{\alpha\in \F_s}\varphi_{\alpha}(s)f(s)\|\leq
    \sum_{\alpha\in \F_s}\varphi_{\alpha}(s)\|h_{\alpha}(s) - f(s)\| < 2\varepsilon
   \]
   and the proof is complete.

\end{proof}

\begin{lem} \label{L:cont}

  Let $\{f_n\}$ be a sequence of admissible $\varepsilon$-continuous ($\varepsilon > 0$) functions, uniformly convergent on $S$. Then its limit
  $ f$ is admissible and $2\varepsilon$-continuous.

\end{lem}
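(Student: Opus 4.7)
The admissibility of $f$ is automatic from the uniform convergence: for every $s\in S$ the sequence $\{f_n(s)\}$ lies in the Banach space $\E(\pi(s))$ and is uniformly Cauchy there, so its limit $f(s)$ is again in $\E(\pi(s))$, giving $p\circ f=\pi$. The real content is the $2\varepsilon$-continuity, and my plan is to fix $s_0\in S$, pin down a single $f_N$ with $N$ large, and \emph{thicken} an $\varepsilon$-thin neighborhood of $f_N(s_0)$ fiber-wise to produce a $2\varepsilon$-thin neighborhood of $f(s_0)$ that captures $f$ near $s_0$.

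Concretely, choose $N$ with $\sup_{s\in S}\|f_N(s)-f(s)\|<\varepsilon/2$, and use the $\varepsilon$-continuity of $f_N$ at $s_0$ to obtain a neighborhood $V$ of $s_0$ and an $\varepsilon$-thin open neighborhood $U$ of $f_N(s_0)$ with $f_N(V)\subset U$. Define
\[
W:=\{\,y\in\E : \exists\,x\in U \text{ with } p(x)=p(y) \text{ and } \|x-y\|<\varepsilon/2\,\}.
\]
Three of the four required properties of $W$ fall out immediately. For $s\in V$ the point $x=f_N(s)\in U$ satisfies $p(x)=\pi(s)=p(f(s))$ and $\|x-f(s)\|<\varepsilon/2$, so $f(V)\subset W$ and in particular $f(s_0)\in W$. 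If $y_1,y_2\in W$ with $p(y_1)=p(y_2)$, witnesses $x_1,x_2\in U$ satisfy $p(x_1)=p(x_2)$, so the $\varepsilon$-thinness of $U$ together with the triangle inequality gives $\|y_1-y_2\|<\varepsilon/2+\varepsilon+\varepsilon/2=2\varepsilon$; thus $W$ is $2\varepsilon$-thin.

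The delicate point, and the main obstacle, is showing that $W$ is open in $\E$. For $y_0\in W$ with witness $x_0\in U$, the upper semi-continuity of $(x,y)\mapsto\|x-y\|$ on $D:=\{(x,y)\in\E\times\E : p(x)=p(y)\}$ (the fact already exploited in the proof of Lemma~\ref{L:seg}) produces basic open sets $V_1,V_2\subset\E$ with $x_0\in V_1\subset U$, $y_0\in V_2$, and $(V_1\times V_2)\cap D\subset\{\|x-y\|<\varepsilon/2\}$. Since the bundle projection $p$ is open, $V_2\cap p^{-1}(p(V_1))$ is an open neighborhood of $y_0$; any $y$ in it admits some $x\in V_1\subset U$ with $p(x)=p(y)$, whence $(x,y)\in(V_1\times V_2)\cap D$, giving $\|x-y\|<\varepsilon/2$ and therefore $y\in W$. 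As $s_0$ was arbitrary, $f$ is $2\varepsilon$-continuous.
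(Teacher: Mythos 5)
Your proof is correct and takes essentially the same route as the paper: both fix an $f_N$ uniformly within $\varepsilon/2$ of $f$, take a neighbourhood $V$ of $s_0$ and an $\varepsilon$-thin open $U$ with $f_N(V)\subset U$, and then thicken $U$ fiberwise by $\varepsilon/2$ to obtain a $2\varepsilon$-thin open set containing $f(V)$ (your $W$ is literally the same set as the paper's $\{x+y \mid p(x)=p(y)\in\pi(V),\ \|x\|<\varepsilon/2,\ y\in U\}$). The only divergence is that the paper gets the openness of $W$ by invoking Lemma \ref{L:open}, while you prove it directly from the upper semi-continuity of $(x,y)\mapsto\|x-y\|$ and the openness of $p$; both arguments are valid.
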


\begin{proof}

   Clearly $f$ is admissible; we have only to prove that it is $2\varepsilon$-continuous. Let $n$ be such that $\|f(s) - f_n(s)\| < \varepsilon/2$
   for every $s\in S$. For $s_0\in S$ there are a neighbouhood $V$ of $s_0$ and a neighbourhood $U^{\pr}$ of $f_n(s_0)$ such that
   $f_n(V)\subset U^{\pr}$. Now, $f_n$ is admissible so $\pi(V)\subset U^{\pr}$. Put $U := U^{\pr}\cap p^{-1}(\pi(V))$ and
   \[
    W := \{x + y \mid p(x) = p(y)\in \pi(V), \|x\| < \varepsilon/2, y\in U\}.
   \]
   Then Lemma \ref{L:open} yields that $W$ is an open neighbourhoo of $f(s_0)$ and it is easily seen that it is $2\varepsilon$-thin. If $s\in V$
   then $f(s) = f_n(s) + (f(s) - f_n(s))\in W$. We conclude that $f$ is $2\varepsilon$-continuous at $s_0$.

\end{proof}

In all the applications but one of the theorem that follows the setup will be somewhat simpler; we shall have $S = T$ and $\pi$ will be the
identity map of $T$.

\begin{thm} \label{T:selection}

   Let $\xi := (p,\E,T)$ be a Banach bundle, $S$ a paracompact topological space, $\pi$ a continuous open map of $S$ onto $T$ and $\Phi$ a lower
   semi-continuous map from $S$ to the family of non-empty closed subsets of $\E$ such that $\Phi(s)$ is a convex subset of the fiber
   $p^{-1}(\pi(s))$ for every $s\in S$. Then there exists a continuous admissible function $f : S\to \E$ such that $f(s)\in \Phi(s)$ for every $s\in
   S$. Moreover, if $s_0\in S$ and $x_0\in \Phi(s_0)$ then $f$ can be chosen so that $f(s_0) = x_0$.

\end{thm}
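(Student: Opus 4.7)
The plan is to produce $f$ as the uniform limit of a sequence $\{f_n\}$ of admissible $\eps_n$-continuous approximations with $\eps_n\to 0$ and $B(f_n(s),\eps_n)\cap \Phi(s)\neq \mset$ for every $s\in S$; the iteration is the classical Michael Cauchy construction, and the real work has been done already in the approximation Lemma~\ref{L:app1}.

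Setting $\eps_n := 2^{-n}$, I would take $f_1$ from Lemma~\ref{L:app1}(i) and apply Lemma~\ref{L:app1}(ii) inductively to produce $f_{n+1}$ that is $\eps_{n+1}$-continuous, still meets $\Phi(s)$ within $\eps_{n+1}$, and satisfies $\|f_{n+1}(s)-f_n(s)\| < 2\eps_n$ for every $s\in S$. Because $\sum 2\eps_n < \infty$, for each $s$ the sequence $\{f_n(s)\}$ is Cauchy in the Banach space $p^{-1}(\pi(s))$ and therefore converges to some $f(s)$, with $\sup_{s\in S}\|f(s)-f_n(s)\|\leq 4\eps_n$, so the convergence is uniform in the fibrewise norm. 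Admissibility of $f$ is automatic. For each fixed $n$, every $f_m$ with $m\geq n$ is $\eps_m$-continuous and hence $\eps_n$-continuous, so Lemma~\ref{L:cont} forces $f$ to be $2\eps_n$-continuous; as $n$ is arbitrary, Lemma~\ref{L:all} then yields continuity of $f$. Finally, picking $x_n\in B(f_n(s),\eps_n)\cap \Phi(s)$ gives $\|f(s)-x_n\|\to 0$, so closedness of $\Phi(s)$ inside the fibre yields $f(s)\in \Phi(s)$.

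For the ``moreover'' clause I would reduce to the statement just proved by applying it to the modified multifunction $\Phi^{\pr}$ agreeing with $\Phi$ off $s_0$ and with $\Phi^{\pr}(s_0) := \{x_0\}$; each value is still a non-empty closed convex subset of the appropriate fibre. The only thing to verify is that $\Phi^{\pr}$ is lower semi-continuous, which is a short case check on open sets $O\subset \E$: if $x_0\in O$, then lower semi-continuity of $\Phi$ at $s_0$ via $x_0\in \Phi(s_0)\cap O$ supplies a neighbourhood of $s_0$ on which $\Phi^{\pr}$ meets $O$; at any $s\neq s_0$ where $\Phi^{\pr}$ meets $O$, a suitable neighbourhood of $s$ is obtained from lower semi-continuity of $\Phi$, intersected with the open set $S\setminus \{s_0\}$ when $x_0\notin O$. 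The selection produced for $\Phi^{\pr}$ is then forced to send $s_0$ to $x_0$. The only real obstacle to the whole theorem sat in Lemma~\ref{L:app1}(ii); what remains above is essentially bookkeeping around fibrewise completeness and the closedness of $\Phi(s)$.
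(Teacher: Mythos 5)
Your proposal is correct and follows essentially the same route as the paper: start from Lemma~\ref{L:app1}(i), iterate Lemma~\ref{L:app1}(ii) to get a uniformly Cauchy sequence, pass to the limit via Lemmas~\ref{L:cont} and~\ref{L:all}, use fibrewise closedness of $\Phi(s)$ to land in $\Phi(s)$, and handle the ``moreover'' clause with the modified multifunction collapsing $\Phi(s_0)$ to $\{x_0\}$. The only differences are cosmetic --- you are slightly more explicit than the paper about why the limit lies in $\Phi(s)$ and why the modified map is lower semi-continuous.
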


\begin{proof}

   By Lemma \ref{L:app1} (i) there exists an $1/2$-continuous admissible function $f_1$ such that $B(f_1(s),1/2)\cap \Phi(s)\neq \mset$ for
   every $s\in S$. We proceed now by induction. Suppose that admissible functions $\{f_i\}_{i=1}^n$ are given such that each $f_i$ is
   $1/2^i$-continuous, satisfies $B(f_i(s),1/2^i)\cap \Phi(s)\neq \mset$ and $\|f_i(s) - f_{i+1}(s)\| < 1/2^{i-1}$ for every $s\in S$ and $1\leq i\leq
   n  - 1$. By Lemma \ref{L:app1} (ii) there exists an $i/2^{n+1}$-continuous admissible function $f_{n+1}$ such that
   $B(f_{n+1}(s),1/2^{n+1})\cap\Phi(s)\neq \mset$ and $\|f_n(s) - f_{n+1}(s)\| < 1/2^{n-1}$ for every $s\in S$. The sequence $\{f_n\}$ is
   uniformly convergent on $S$. The limit $f$ is admissible and $1/2^n$-continuous for $n\geq 1$ by Lemma \ref{L:cont}. Thus $f$ is continuous
   on $S$ by Lemma \ref{L:all}. Obviously $f(s)\in \Phi(s)$ for each $s\in S$.

   The second assertion of the theorem follows from the first by considering the lower semi-continuous set valued map
   \begin{equation}
    \Phi^(s) :=
    \begin{cases}
       \Phi(s),\  s\neq s_0,\\
       x_0,\  s = s_0.
    \end{cases}
   \end{equation}

\end{proof}

Theorem \ref{T:selection} is a generalization of \cite[Proposition 15.13]{G} where the case $S = T$, a compact Hausdorff space, $\pi$ being the
identity map of $T$, is discussed. By considering a trivial Banach bundle one can deduce from Theorem \ref{T:selection} the existence of the
selection part of Michael's \cite[Theorem 3.2'']{M}. Another known result that can be deduced from Theorem \ref{T:selection} is the observation
on p. 15 of \cite{DG} that partial section over closed subsets of the base space can be extended to sections over the entire base space. We
state below the precise result.

\begin{cor} \label{C:ext}

   Let $\xi := (p,\E,T)$ be a Banach bundle with paracompact base space, $A$ a closed subset of $T$ and $\varphi : A\to \E$ a section of
   $\xi\mid A$. Then there exists a section $\tilde{\varphi} : T\to \E$ of $\xi$ that extends $\varphi$. Moreover, if $\varphi$ is bounded then
   there exists a bounded extension $\tilde{\varphi}$. If $\xi := (\E,p,T)$ is a Banach bundle with locally compact Hausdorff base space, $A$ a
   compact subset of $T$ and $\varphi$ a section of $\xi\mid A$ then there is a section $\tilde{\varphi}\in \Gamma_0(\xi)$ that extends
   $\varphi$.

\end{cor}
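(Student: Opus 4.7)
The plan is to apply Theorem~\ref{T:selection} three times (with $S = T$ and $\pi = \Id_T$), choosing an appropriate lower semi-continuous set-valued map $\Phi$ for each assertion.

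For the first assertion I would take $\Phi(t) := \{\varphi(t)\}$ if $t\in A$ and $\Phi(t) := p^{-1}(t)$ if $t\notin A$. Each value is a non-void closed convex subset of the fiber over $t$: the fibers are closed as continuous preimages of points, and the singletons are closed because fullness of $\xi$ forces $\E$ to be $T_1$ (tubes separate points in a common fiber, and $p$-preimages of disjoint open sets in $T$ separate points in distinct fibers). The key step is lower semi-continuity of $\Phi$. Given an open $O\subset \E$ with $\Phi(t_0)\cap O\neq \mset$, for $t_0 \notin A$ I would exhibit $(T\setminus A)\cap p(O)$ as the required neighborhood (using that $p$ is open on a full Banach bundle); for $t_0\in A$, continuity of $\varphi : A\to \E$ yields an open $V\subset T$ with $\varphi(V\cap A)\subset O$, and then $V\cap p(O)$ works in both regimes. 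Theorem~\ref{T:selection} then delivers a section $\tilde{\varphi}$ with $\tilde{\varphi}(t)\in \Phi(t)$, which on $A$ forces $\tilde{\varphi} = \varphi$.

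For the bounded case, with $\|\varphi(t)\|\leq M$ on $A$, I would first invoke the preceding part to produce $\tilde{\varphi}_0$, then observe that $U := \{t\in T \mid \|\tilde{\varphi}_0(t)\| < 2M\}$ is an open neighborhood of $A$ by upper semi-continuity of the norm on $\E$. Paracompactness (hence normality) of $T$ and Urysohn's lemma furnish a cutoff $\chi : T\to [0,1]$ with $\chi\mid_A\equiv 1$ and $\chi\mid_{T\setminus U}\equiv 0$, and $\tilde{\varphi} := \chi\tilde{\varphi}_0$ is a continuous section by continuity of scalar multiplication in the bundle; it extends $\varphi$ and satisfies $\|\tilde{\varphi}\|_\infty\leq 2M$.

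For the locally compact case with $A$ compact, local compactness lets me pick nested open sets $A\subset W\subset \overline{W}\subset V$ with $\overline{V}$ compact (hence paracompact). Applying the first assertion to $\xi\mid \overline{V}$ extends $\varphi$ to a section $\psi$ on $\overline{V}$. The classical Urysohn construction in the locally compact Hausdorff space $T$ gives $\chi : T\to [0,1]$ with $\chi\mid_A \equiv 1$ and $\chi\mid_{T\setminus W}\equiv 0$. The desired extension is $\chi\psi$ on $V$ and $0_t$ off $W$; the two formulas agree on $V\setminus W$ (where $\chi = 0$), and continuity at boundary points of $V$ follows from continuity of scalar multiplication together with continuity of the zero section. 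Support in the compact set $\overline{W}$ forces $\tilde{\varphi}\in \Gamma_0(\xi)$. The main obstacle is the lower semi-continuity verification in the first step at points $t_0\in A$, where one must simultaneously exploit continuity of $\varphi$ (for nearby $t\in A$) and openness of $p$ (for nearby $t\notin A$) and combine both into a single neighborhood of $t_0$.
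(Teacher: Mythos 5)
Your proposal is correct and follows essentially the same route as the paper: the same set-valued map $\Phi(t):=\{\varphi(t)\}$ on $A$ and $\Phi(t):=p^{-1}(t)$ off $A$ fed into Theorem~\ref{T:selection}, a cutoff function for the bounded case, and extension over a compact closure $\overline{V}$ followed by a cutoff for the locally compact case; your explicit verification of lower semi-continuity and your choice of the set $\{t\mid \|\tilde{\varphi}_0(t)\|<2M\}$ only fill in details the paper leaves implicit. One point to repair: you justify the closedness of the singletons $\{\varphi(t)\}$ by invoking fullness of $\xi$ to get that $\E$ is $T_1$, but fullness is exactly what this corollary is meant to establish, so that appeal is circular; instead, note that $p^{-1}(t)$ is closed in $\E$ (as $T$ is Hausdorff and $p$ is continuous) and carries its Banach-space topology, whence any singleton in a fiber is closed in $\E$.
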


\begin{proof}

   The first claim is obtained by applying Theorem \ref{T:selection} to the set valued map
   \begin{equation}
    \Phi(t) :=
    \begin{cases}
       \{\varphi(t)\},\  t\in A,\\
       p^{-1}(t),\  t\in T\setminus A.
    \end{cases}
   \end{equation}
   To get a bounded extension in case $\varphi$ is bounded one multiplies a section that extends $\varphi$ by a bounded continuous scalar
   valued function that is constantly equal to $1$ on $A$. Finally, when $T$ is locally compact Hausdorff and $A$ is compact one extends
   $\varphi$ to a section $\varphi^{\pr}$ of $\xi\mid \overline{V}$, $V$ being an open neighbourhood of $A$ with compact closure. With $f : T\to
   [0,1]$ a continuous function such that $f\mid A \equiv 1$ and $f\mid T\setminus V \equiv 0$ one defines
   \begin{equation}
    \tilde{\varphi}(t) :=
    \begin{cases}
       f(t)\varphi^{\pr}(t),\  t\in V\\
       0_t,\  t\in T\setminus V.
    \end{cases}
   \end{equation}

\end{proof}

Of course, one can deduce from Corollary \ref{C:ext} the well known theorems of A. Douady, L. dal Soglio-Herault and K. H. Hofmann (\cite[pp.
640-641]{DF}) on the fullness of Banach bundles with paracompact or locally compact Hausdorff base spaces.

\section{Banach bundle maps} \label{S:maps}

We shall now discuss Banach bundle maps and the way they affect various spaces of sections. We shall end this chapter with an extension of a
well known result of Bartle and Graves \cite{BG} to Banach bundle maps.

Let $\xi_1 := (\E_1,p_1,T)$ and $\xi_2 := (\E_2,p_2,T)$ be Banach bundles. Recall that a Banach bundle map from $\xi_1$ to $\xi_2$ is a
continuous map $\varphi : \E_1\to \E_2$ such that $p_2\circ \varphi = p_1$ and which acts linearly on each fiber of $\xi_1$. The restriction of
$\varphi$ to $p_1^{-1}(t)$, $t\in T$, is a bounded linear operator into $p_2^{-1}(t)$ which we shall denote by $\varphi_t$. The Banach bundle
map $\varphi$ induces a linear map $\tilde{\varphi} : \Gamma(\xi_1)\to \Gamma(\xi_2)$ as follows:
\[
 \tilde{\varphi}(f)(t) := \varphi(f(t)), \ f\in \Gamma(\xi_1), \ t\in T.
\]
This map $\tilde{\varphi}$ need not be onto $\Gamma(\xi_2)$ even if $\varphi$ is onto $\E_2$. To illustrate this we shall present an example
inspired by one in \cite{B}. In order to have $\tilde{\varphi}(\Gamma(\xi_1) = \Gamma(\xi_2)$ one has to require $\varphi$ to be open.

\begin{ex} \label{E:notonto}

   Let
   \[
    \E_1 := [0,1]\times (\mathbb{K}\times \mathbb{K}),\  \E_2 := (\{0\}\times (\{0\}\times \mathbb{K}))\cup ((0,1]\times (\mathbb{K}\times \mathbb{K}))                                                                                                         \mathbb{K}))
   \]
   and $p$ be the projection of $\E_1$ onto $[0,1]$. We give the linear space $\{t\}\times (\mathbb{K}\times \mathbb{K})\subset \E_1$ the norm
   $\|(t,(x,y))\| := \mathrm{max}(|x|,|y|)$. Then $\xi_1 := (\E_1,p,[0,1])$ is a trivial Banach bundle and $\xi_2 := (\E_2,p\mid\E_2,[0,1])$ a
   Banach subbundle of $\xi_1$. Define the Banach bundle map $\varphi : \E_1\to \E_2$ by $\varphi(t,(x,y)) := (t,(tx,y))$. Then $\varphi(\E_1) =
  \E_2$. It is obvious that the section $f(t) := (t,(t^{1/2},1))$ of $\Gamma(\xi_2)$ has no preimage in $\Gamma(\xi_1)$.

\end{ex}

\begin{thm} \label{T:sections}

   Let $\xi_i := (\E_i,p_1,T)$, $i = 1,2$, be Banach bundles with $T$ paracompact and $\varphi := \E_1\to \E_2$ an open Banach bundle map onto $\E_2$. Then
   $\tilde{\varphi}(\Gamma(\xi_1)) = \Gamma(\xi_2)$.

\end{thm}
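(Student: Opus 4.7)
The natural plan is to apply Theorem~\ref{T:selection} directly with $S=T$, $\pi = \mathrm{Id}_T$, and an appropriate lower semi-continuous set-valued map built from a given section of $\xi_2$. Fix $g \in \Gamma(\xi_2)$; the goal is to produce $f \in \Gamma(\xi_1)$ with $\varphi \circ f = g$. Define
\[
 \Phi(t) := \varphi^{-1}(g(t)) = \{x \in p_1^{-1}(t) \mid \varphi(x) = g(t)\} \subset \E_1.
\]
The strategy is to verify that $\Phi$ meets the hypotheses of the selection theorem, and then to read off $f$ as the resulting selection.

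First I would check the fiber-theoretic properties. The set $\Phi(t)$ sits inside the fiber $p_1^{-1}(t)$, and the restriction $\varphi_t : p_1^{-1}(t) \to p_2^{-1}(t)$ is a bounded linear operator. Since $\varphi$ is onto $\E_2$ and $g(t) \in p_2^{-1}(t)$, the set $\Phi(t) = \varphi_t^{-1}(g(t))$ is non-empty; as the preimage of a point under an affine map it is convex (an affine translate of $\ker \varphi_t$). It is closed in the Banach space $p_1^{-1}(t)$, and $p_1^{-1}(t)$ is itself closed in $\E_1$ since $T$ is Hausdorff, so $\Phi(t)$ is a closed, non-void, convex subset of $\E_1$ contained in the fiber over $t$.

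The key step, and the one where the hypothesis that $\varphi$ is open is essential, is to verify lower semi-continuity of $\Phi$. Given an open set $O \subset \E_1$, one has $O \cap \Phi(t) \neq \emptyset$ if and only if there exists $x \in O$ with $\varphi(x) = g(t)$, which is to say $g(t) \in \varphi(O)$. Thus
\[
 \{t \in T \mid O \cap \Phi(t) \neq \mset\} = g^{-1}(\varphi(O)),
\]
and this set is open in $T$ because $\varphi(O)$ is open in $\E_2$ by the openness of $\varphi$, and $g$ is continuous. This is precisely where Example~\ref{E:notonto} would fail, and it is the only nontrivial verification in the argument.

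With all hypotheses of Theorem~\ref{T:selection} in place (applied to the paracompact space $S = T$ with $\pi = \mathrm{Id}_T$), I would extract a continuous admissible selection $f : T \to \E_1$ satisfying $p_1(f(t)) = t$ and $f(t) \in \Phi(t)$ for every $t \in T$. By construction $f \in \Gamma(\xi_1)$ and $\tilde{\varphi}(f)(t) = \varphi(f(t)) = g(t)$ for all $t$, so $\tilde{\varphi}(f) = g$ and therefore $\tilde{\varphi}(\Gamma(\xi_1)) = \Gamma(\xi_2)$. The whole difficulty of the theorem is absorbed into the selection theorem; the real content of the proof is the observation that openness of $\varphi$ translates exactly into lower semi-continuity of the fiberwise preimage map $t \mapsto \varphi^{-1}(g(t))$.
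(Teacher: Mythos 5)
Your proposal is correct and follows essentially the same route as the paper: define $\Phi(t):=\varphi^{-1}(g(t))$, observe that openness of $\varphi$ makes $\{t\mid O\cap\Phi(t)\neq\mset\}=g^{-1}(\varphi(O))$ open, and invoke Theorem~\ref{T:selection}. Your additional verification that each $\Phi(t)$ is a non-void closed convex subset of the fiber is a detail the paper leaves implicit, but it is the same argument.
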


\begin{proof}

Let $f\in \Gamma(\xi_2)$ and define $\Phi(t) := \varphi^{-1}(f(t))$, $t\in T$. We are going to show that $\Phi$ is lower semi-continuous. To
this end let $U\subset \E_1$ be open and suppose $U\cap \Phi(t_0)\neq \mset$. Thus there exists $x_0\in U$ with $p(x_0) = t_0$ and $\varphi(x_0)
= f(t_0)$. The set $\varphi(U)$ is open in $\E_2$ therefore there exists an open neighbourhood $V$ of $t_0$ such that $f(V)\subset \varphi(U)$.
It follows that $t_0\in V\subset \{t\in T\mid U\cap \Phi(t)\neq \mset\}$ and we conclude that the latter set is open. By Theorem
\ref{T:selection} $\Phi$ admits a continuous selection $g$. This is a section of $\xi_1$ that satisfies $\tilde{\varphi}(g) = f$.

\end{proof}

Thus it seems to be of some interest to establish conditions for a Banach bundle map which is onto the bundle space of the image to be open.

\begin{prop} \label{P:open}

   Let $\xi_1 := (\E_1,p_1,T)$ be a full Banach bundle and $\varphi$ a Banach bundle map of $\xi_1$ onto $\xi_2 := (\E_2,p_2,T)$. Then $\varphi$
   is open if and only if there exists an open cover $\{V_{\alpha}\}_{\alpha\in \A}$ of $T$ and positive numbers $\{m_{\alpha}\}_{\alpha\in \A}$
   such that for every $y\in p_2^{-1}(V_{\alpha})$ there is $x\in \E_1$ that satisfies $\varphi(x) = y$ and $\|x\|\leq
   m_{\alpha}\|y\|$, $\alpha\in \A$,

\end{prop}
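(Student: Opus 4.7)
The plan is to handle the two implications separately: the forward direction via a scaling argument based on openness of $\varphi$ at the zero section, and the reverse direction via a direct tube computation. A preliminary observation I would make is that $\xi_2$ is also full: any $y\in \E_2$ equals $\varphi(x)$ for some $x\in \E_1$, and if $f\in \Gamma(\xi_1)$ satisfies $f(p_1(x)) = x$ (which exists by fullness of $\xi_1$) then $\varphi\circ f$ is a section of $\xi_2$ passing through $y$. Consequently tubes form a base for the topology of $\E_2$, and in particular tubes around the zero section of $\xi_2$ provide a neighbourhood base at each $0_{t_0}$.

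For the forward direction, I would start from the observation that $U_1 := \{x\in \E_1 \mid \|x\| < 1\}$ is open in $\E_1$, so $\varphi(U_1)$ is open in $\E_2$ and contains $0_{t_0}$ for every $t_0$. By the preliminary observation, there exist an open neighbourhood $V_{t_0}$ of $t_0$ and $\varepsilon_{t_0} > 0$ with $\{y\in \E_2 \mid p_2(y)\in V_{t_0}, \|y\| < \varepsilon_{t_0}\} \subset \varphi(U_1)$. For any nonzero $y\in p_2^{-1}(V_{t_0})$, setting $\lambda := \varepsilon_{t_0}/(2\|y\|)$, the point $\lambda y$ lies in this smaller open tube, so there is $x^{\pr}\in \E_1$ with $\|x^{\pr}\| < 1$ and $\varphi(x^{\pr}) = \lambda y$; then $x := \lambda^{-1}x^{\pr}$ satisfies $\varphi(x) = y$ (by fiber-linearity) and $\|x\| \leq (2/\varepsilon_{t_0})\|y\|$. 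Taking $m_{t_0} := 2/\varepsilon_{t_0}$ and the cover $\{V_{t_0}\}_{t_0\in T}$ gives the required data.

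For the reverse direction, let $W\subset \E_1$ be open, $x_0\in W$, $y_0 := \varphi(x_0)$, and $t_0 := p_1(x_0)$. Using fullness of $\xi_1$ pick a section $f$ of $\xi_1$ with $f(t_0) = x_0$, and since tubes form a base, obtain a tube $U(f,V,a)\subset W$ containing $x_0$. Choose $\alpha$ with $t_0\in V_{\alpha}$, set $V^{\pr} := V\cap V_{\alpha}$, and let $g := \varphi\circ f \in \Gamma(\xi_2)$, so $g(t_0) = y_0$. I claim that the open tube $U(g,V^{\pr},a/(2m_{\alpha}))$ sits inside $\varphi(W)$: indeed, for $y$ in this tube with $t := p_2(y)$, the hypothesis produces $x^{\pr}\in \E_1$ with $\varphi(x^{\pr}) = y - g(t)$ and $\|x^{\pr}\|\leq m_{\alpha}\|y - g(t)\| < a/2$; setting $x := x^{\pr} + f(t)$ yields $\varphi(x) = y - g(t) + g(t) = y$ and $\|x - f(p_1(x))\| = \|x^{\pr}\| < a$, so $x\in U(f,V,a)\subset W$. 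Since $y_0\in U(g,V^{\pr},a/(2m_{\alpha}))$, the set $\varphi(W)$ is a neighbourhood of $y_0$, proving openness of $\varphi$.

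The main obstacle is the preliminary observation in the forward direction, since without transferring fullness from $\xi_1$ to $\xi_2$ one cannot assert that the open set $\varphi(U_1)$ contains a standard tube around the zero section of $\xi_2$. Once that is in place, the scaling argument relying on fiber-linearity and the tube computation in the reverse direction are both routine.
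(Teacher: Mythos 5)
Your proof is correct and follows essentially the same route as the paper's: the reverse direction translates a tube around $x_0$ by a section through it and uses the bounded lifting to cover a tube of radius $a/(2m_\alpha)$ around $\varphi\circ f$, and the forward direction extracts a zero-section tube inside $\varphi(\{\|x\|<1\})$ and scales by fiber-linearity, exactly as in the paper. The only cosmetic difference is that you justify the existence of the relevant tube neighbourhoods by transferring fullness to $\xi_2$ and invoking the tube-base property, where the paper runs short net-convergence arguments instead.
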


\begin{proof}

   Suppose that $\varphi$ satisfies the condition; we are going to show that $\varphi$ is open on $p_1^{-1}(V_{\alpha})$, $\alpha\in \A$, and this
   will prove the 'if' part of the statement. So let $U\subset p_1^{-1}(V_{\alpha})$ be an open set and $y_0\in \varphi(U)$. Denote $t_0 :=
   p_2(y_0)\in V_{\alpha}$ and let $x_0\in U$ be such that $\varphi(x_0) = y_0$ and $f\in \Gamma(\xi_1)$ with $f(t_0) = x_0$. There exist an open
   neighbourhood $V$ of $t_0$ in $V_{\alpha}$ and $\varepsilon >0$ such that
   \[
    x_0\in \{x\in p_1^{-1}(V_{\alpha})\mid p_1(x)\in V,\ \|x - f(p_1(x))\| < \varepsilon\}\subset U.
   \]
   If not then there would exist a net $\{x_{\iota}\}$ in $p_1^{-1}(V_{\alpha})\setminus U$ such that $\{p_1(x_{\iota})\}$ converges to $t_0$ and
   $\|x_{\iota} - f(p_1(x_{\iota}))\|\to 0$. But that means $x_{\iota}\to f(t_0) = x_0\in U$, a contradiction. Hence $V$ and $\varepsilon > 0$
   as claimed indeed exist. Put $\tilde{f} := \tilde{\varphi}(f)$. We claim now that the open neighbourhood
   \[
    U^{\pr} ;= \{y\in p_2^{-1}(V_{\alpha})\mid p_2(y)\in V,\ \|y - \tilde{f}(p_2(y))\| < \varepsilon/m_{\alpha}\}
   \]
   of $y_0$ is contained in $\varphi(U)$. Indeed, let $y\in U^{\pr}$. By our assumption, there exists $x\in p_1^{-1}(p_2(y))$ such that
   $\varphi(x) = y - \tilde{f}(p_2(y))$ and
   \[
    \|x\|\leq m_{\alpha}\|y - \tilde{f}(p_2(y))\| < \varepsilon.
   \]
   Since
   \[
    \|(x + f(p_2(y))) - f(p_2(y))\| = \|x\| < \varepsilon
   \]
   we get $x + f(p_2(y))\in U$ and $y = \varphi(x + f(p_2(y)))\in \varphi(U)$ as needed.

   Suppose now that $\varphi$ is open. Let $t\in T$. The set $\varphi(\{x\in \E_1\mid \|x\| < 1\})$ is an open neighbourhood of $0_t$. There exist
   an open neighbourhood $V_t$ of $t$ and a positive number $k_t$ such that
   \[
    \{y\in \E_2\mid p_2(y)\in V_t,\ \|y\| < k_t\}\subset \varphi(\{x\in \E_1\mid \|x\| < 1\}).
   \]
   If not then there would exist a net $\{y_\iota\}$ in $\E_2\setminus \varphi(\{x\in \E_1\mid \|x\|\})$ such that $\{p_2(y_{\iota})\}$
   converges to $t$ and $\|y_{\iota}\|\to 0$. But then $y_{\iota}\to 0_t$, a contradiction. We are going to show that $V_t$ and $k_t$ have the property
   that for $y\in p_2^{-1}(V_t)$ there exists $x$ such that $\varphi(x) = y$ and
   \[
   \|x\|\leq \frac{2}{k_t}\|y\|.
   \]
   Now, for $y\in p_2^{-1}(V_t)$, $y\neq 0$, there exists $x^{\pr}\in p_1^{-1}(p_2(y))$ such that $\|x^{\pr}\| < 1$ and
   \[
    \varphi(x^{\pr}) = \frac{m_t}{2\|y\|}y.
   \]
   We got
   \[
    \varphi(\frac{2\|y\|}{k_t}x^{\pr}) = y,\ \ \|\frac{2\|y\|}{k_t}x^{\pr}\| \leq \frac{2}{k_t}\|y\|.
   \]

\end{proof}

The openness of a map between two Banach bundles intervenes also when looking at the preimage of a Banach subbundle. Returning to Example
\ref{E:notonto}, one can see that the preimage of the null subbundle $\{0_t\mid t\in T\}$ of $\xi_2$ by the map $\varphi$ is $\E^{\pr} :=
\{(0,(\mathbb{K}\times \{0\}))\}\cup \{(0,1]\times \{(0,0)\}$ which cannot be the space bundle of a subbundle of $\xi_1$. The subset
$\{(0,(\{\mathcal{R}e(x) > 0\mid x\in \mathbb{K}\}\times \{0\})\}$ is relatively open in $\E^{\pr}$ but its image by the projection $p_1$ is
$\{0\}$.

\begin{prop} \label{P:preim}

   Let $\varphi$ be an open Banach bundle map of the Banach bundle $\xi_1 := (\E_1,p_1,T)$ onto the Banach bundle $\xi_2 := (\E_2,p_2,T)$ and
   let \linebreak $\xi_2^{\pr} := (\E_2^{\pr},p_2\mid \E_2^{\pr},T)$ be a Banach subbundle of $\xi_2$. Then, with $\E_1^{\pr} := \varphi^{-1}(\E_2^{\pr})$,
   $(\E_1^{\pr},p_1\mid \E_1^{\pr},T)$ is a subbundle of $\xi_1$.

\end{prop}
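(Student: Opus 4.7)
The plan is to verify the three defining properties of a Banach subbundle for the candidate $\E_1^{\pr} := \varphi^{-1}(\E_2^{\pr})$: (a) each fibre $p_1^{-1}(t)\cap \E_1^{\pr}$ is a closed linear subspace of $p_1^{-1}(t)$; (b) the restriction of $p_1$ maps $\E_1^{\pr}$ onto $T$; and (c) this restriction is an open map. Of these, (a) and (b) are essentially formal, so the work concentrates in (c), which will combine the openness of $\varphi$ with the openness of $p_2\mid_{\E_2^{\pr}}$ supplied by the hypothesis that $\xi_2^{\pr}$ is already a Banach subbundle.

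For (a), I would just observe the identity $p_1^{-1}(t)\cap \E_1^{\pr} = \varphi_t^{-1}(\E_2^{\pr}(t))$: since $\varphi_t$ is a bounded linear operator and $\E_2^{\pr}(t)$ is a closed linear subspace of $p_2^{-1}(t)$, this preimage is a closed linear subspace of $p_1^{-1}(t)$. For (b), since $\varphi_t$ is linear it sends $0_t$ to $0_t$, and $0_t\in \E_2^{\pr}$ because $\E_2^{\pr}(t)$ is a subspace; hence $0_t\in \E_1^{\pr}$, giving a lift of every $t\in T$.

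The main step (c) amounts to showing that for any open $W\subset \E_1$ the image $p_1(W\cap \E_1^{\pr})$ is open in $T$. I plan to establish the set-theoretic identity
\[
 p_1(W\cap \E_1^{\pr}) = p_2\bigl(\varphi(W)\cap \E_2^{\pr}\bigr).
\]
The inclusion $\subset$ is immediate from $\varphi(W\cap \E_1^{\pr})\subset \varphi(W)\cap \E_2^{\pr}$ combined with $p_2\circ \varphi = p_1$; for $\supset$, given $y\in \varphi(W)\cap \E_2^{\pr}$ one picks any preimage $x\in W$ with $\varphi(x)=y$, and notes that $x$ lies in $\E_1^{\pr}$ automatically by the definition of $\E_1^{\pr}$. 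Once this identity is in place, openness follows from two applications of hypothesis: $\varphi(W)$ is open in $\E_2$ because $\varphi$ is open, so $\varphi(W)\cap \E_2^{\pr}$ is relatively open in $\E_2^{\pr}$, and its image under $p_2\mid_{\E_2^{\pr}}$ is then open in $T$ because $\xi_2^{\pr}$ is a Banach subbundle. There is no real obstacle here; neither the selection theorem from Section \ref{S:selection} nor any fullness argument is required, only the elementary identity above.
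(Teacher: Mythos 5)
Your proposal is correct and follows essentially the same route as the paper: the heart of both arguments is the identity $p_1(W\cap \E_1^{\pr}) = p_2(\varphi(W)\cap \E_2^{\pr})$ (via $\varphi(W\cap \E_1^{\pr}) = \varphi(W)\cap \E_2^{\pr}$, which holds precisely because $\E_1^{\pr}$ is the full preimage of $\E_2^{\pr}$), combined with the openness of $\varphi$ and of $p_2\mid_{\E_2^{\pr}}$. The paper omits the routine fibrewise verifications (a) and (b) that you spell out, but otherwise the arguments coincide.
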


\begin{proof}

   Let $U\subset \E_1$ be open and $U\cap \E_1^{\pr}\neq \mset$. Then $\varphi(U\cap E_1^{\pr}) = \varphi(U)\cap \E_2^{\pr}$ is relatively open
   in $\E_2^{\pr}$. It follows that $p_1(U\cap \E_1^{\pr}) = p_2(\varphi(U)\cap \E_2^{\pr})$ is open in $T$. We conclude that $p_1\mid
   \E_1^{\pr}$ is an open map for the relative topology of $\E_1^{\pr}$.

\end{proof}

We turn our attention now to the spaces of bounded sections. In order to have $\tilde{\varphi}(\Gamma_b(\xi_1))\subset \Gamma_b(\xi_2)$ for a
Banach bundle map between two Banach bundles it is quite natural to require $\sup_{t\in T}\|\varphi_t\| < \infty$. However, even in the situation of
Theorem \ref{T:sections} one does not necessarily have $\tilde{\varphi}(\Gamma_b(\xi_1)) = \Gamma_b(\xi_2)$. Indeed, let $\E_1 = \E_2 :=
\mathbb{N}\times \mathbb{K}$, $p_1 = p_2$ the projection on $\mathbb{N}$, and $\xi_i := (\E_i,p_i,\mathbb{N})$, $i = 1,2$, $\varphi(n,x) := (n,
x/n^2)$. The bounded section of $\xi_2$ given by $n\to (n,1/n)$ is the image of only one section of $\xi_1$: the unbounded section $n\to
(n,n)$.

\begin{thm} \label{T:bsections}

   Let $T$ be a paracompact space and $\xi_i := (\E_i,p_i,T)$, $i = 1,2$, Banach bundles. Let $\varphi$ be a Banach bundle map of $\xi_1$ onto
   $\xi_2$ such that $\sup_{t\in T}\|\varphi_t\| < \infty$. Then $\tilde{\varphi}(\Gamma_b(\xi_1)) = \Gamma_b(\xi_2)$ if and only if there is a
   positive number $m$ with the property that for each $y\in \E_2$ there exists $x\in \E_1$ such that $\varphi(x) = y$ and $\|x\|\leq m\|y\|$.

\end{thm}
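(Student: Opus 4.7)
The plan is to prove the two implications separately. For the ``only if'' direction, both $\Gamma_b(\xi_1)$ and $\Gamma_b(\xi_2)$ are Banach spaces under the sup-norm (standard for (H)Banach bundles), and $\sup_t\|\varphi_t\|<\infty$ makes $\wt\varphi$ a bounded linear operator between them; surjectivity plus the Banach open mapping theorem yields $m^{\pr}>0$ such that every $f\in\Gamma_b(\xi_2)$ admits a preimage $g\in\Gamma_b(\xi_1)$ with $\|g\|\le m^{\pr}\|f\|$. To descend to the fiber level, fix $y\in\E_2$ with $t_0:=p_2(y)$. By fullness of $\xi_2$ (Corollary \ref{C:ext}, since $T$ is paracompact), there is a bounded section $f_0$ of $\xi_2$ with $f_0(t_0)=y$. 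Upper semi-continuity of the norm gives an open $V\ni t_0$ on which $\|f_0\|\le 2\|y\|$, and a continuous bump $\chi\colon T\to[0,1]$ with $\chi(t_0)=1$ and $\chi\equiv 0$ off $V$ yields $f:=\chi f_0\in\Gamma_b(\xi_2)$ with $f(t_0)=y$ and $\|f\|\le 2\|y\|$. Any preimage $g$ then produces $x:=g(t_0)$ with $\varphi(x)=y$ and $\|x\|\le 2m^{\pr}\|y\|$, so $m:=2m^{\pr}$ works.

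For the ``if'' direction, fix $f\in\Gamma_b(\xi_2)$ together with an auxiliary $\eps>0$, set $M:=m\|f\|_\infty+\eps$, and define
\[
\Phi(t):=\{x\in\E_1\mid\varphi(x)=f(t),\ \|x\|\le M\}.
\]
Each $\Phi(t)$ is a convex subset of the fiber $p_1^{-1}(t)$; it is non-empty because the hypothesis supplies a lift of $f(t)$ of norm at most $m\|f(t)\|<M$, and closed in $\E_1$ by upper semi-continuity of the norm together with Hausdorffness of the Banach fiber $p_2^{-1}(t)$, which forces the constant net $\varphi(x_n)=f(t)$ to have $f(t)$ as its only limit. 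Once $\Phi$ is shown to be lower semi-continuous, Theorem \ref{T:selection} (with $S=T$ and $\pi=\Id$) produces a continuous section $g\in\Gamma(\xi_1)$ with $g(t)\in\Phi(t)$ for every $t$, hence $g\in\Gamma_b(\xi_1)$ and $\wt\varphi(g)=f$.

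The main obstacle is the lower semi-continuity of $\Phi$. Given an open $O\subset\E_1$ and $x_0\in O\cap\Phi(t_0)$, the subtlety is that $\|x_0\|$ might equal $M$, so a direct perturbation argument would risk overshooting the norm bound. I circumvent this by choosing $x_1\in\Phi(t_0)$ with $\|x_1\|\le m\|f(t_0)\|$ and forming $x_\alpha:=\alpha x_0+(1-\alpha)x_1\in\Phi(t_0)$; since the subspace topology that $\E_1$ induces on $p_1^{-1}(t_0)$ coincides with the Banach norm topology of the fiber, $x_\alpha\in O$ for $\alpha$ close enough to $1$, and then $\|x_\alpha\|<M$ strictly. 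Using fullness of $\xi_1$ (Corollary \ref{C:ext}), I extend $x_\alpha$ to a section $h\in\Gamma(\xi_1)$, so that $O$ contains a tube $U(h,V_0,\delta)$ about $x_\alpha$. For each $t\in V_0$ the hypothesis supplies $z(t)\in p_1^{-1}(t)$ lifting $f(t)-\wt\varphi(h)(t)$ with $\|z(t)\|\le m\|f(t)-\wt\varphi(h)(t)\|$, and setting $x(t):=h(t)+z(t)$ gives $\varphi(x(t))=f(t)$ and $x(t_0)=x_\alpha$. Both $m\|f(t)-\wt\varphi(h)(t)\|$ and $\|h(t)\|+m\|f(t)-\wt\varphi(h)(t)\|$ are upper semi-continuous in $t$; at $t_0$ they equal $0<\delta$ and $\|x_\alpha\|<M$ respectively, so there is an open neighbourhood $V\subset V_0$ of $t_0$ on which $\|z(t)\|<\delta$ and $\|x(t)\|\le\|h(t)\|+\|z(t)\|<M$. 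This places $x(t)\in U(h,V_0,\delta)\cap\Phi(t)\subset O\cap\Phi(t)$ for all $t\in V$, which is exactly the lower semi-continuity needed.
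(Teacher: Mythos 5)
Your proof is correct, and its overall skeleton coincides with the paper's: the ``only if'' half via the Banach open mapping theorem plus a bump function localizing a section through $y$, and the ``if'' half by building a pointwise-bounded preimage multifunction and feeding it to Theorem \ref{T:selection}. The one genuine divergence is in how lower semi-continuity of $\Phi$ is obtained. The paper first invokes Proposition \ref{P:open} to conclude that $\varphi$ is an open map, defines $\Phi(t)$ with the \emph{strict} bound $\|x\|<(m+1)\|f\|$, gets lower semi-continuity in two lines from the openness of $\varphi$ restricted to the open ball, and only then passes to $t\mapsto\overline{\Phi(t)}$ to meet the closedness hypothesis of the selection theorem. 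You instead work with the closed bound $\|x\|\le M$ from the start (so no closure step is needed, at the price of the convex-combination trick to retreat from the boundary sphere) and you establish lower semi-continuity by a self-contained tube argument --- extending $x_\alpha$ to a section $h$, lifting the defect $f(t)-\wt\varphi(h)(t)$ with the norm control $m$, and using upper semi-continuity of $t\mapsto\|h(t)\|+m\|f(t)-\wt\varphi(h)(t)\|$ --- which in effect inlines the ``if'' half of Proposition \ref{P:open} rather than citing it. The paper's route is shorter given that \ref{P:open} is already on record; yours is more self-contained and makes explicit the quantitative point (the usc comparison against $\delta$ and $M$) that the openness argument hides. Both are sound; only cosmetic points remain in yours (treat $y=0$ separately in the ``only if'' half, and note that the tube $U(h,V_0,\delta)\subset O$ exists by the same \cite[p.~10]{DG} fact the paper uses in Lemma \ref{L:all} and Proposition \ref{P:open}).
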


\begin{proof}

   Suppose a constant $m > 0$ as in the above statement exists. Then $\varphi$ is an open map by Proposition \ref{P:open}. Let $f\in
   \Gamma_b(\xi_2)$ and define the set valued map $\Phi(t) := \{x\in p_1^{-1}(t)\mid \varphi(x) = f(t), \ \|x\| < (m + 1)\|f\|\}$, $t\in T$.
   Then $\Phi(t)$ is a non-void convex subset of $p_1^{-1}(t)$ for every $t\in T$. We are going to show that $\Phi$ is lower semi-continuous.
   Let $U$ be an open subset of $\E_1$ and suppose $U\cap \Phi(t_0)\neq \mset$. Thus there exists $x_0\in U\cap \{x\in \E_1\mid \|x\| < (m +
   1)\|f\|\}\cap \Phi(t_0)$. The subset $\varphi(U\cap \{x\in \E_1\mid \|x\| < (m + 1)\|f\|\}$ of $\E_2$ is an open neighbourhood of $f(t_0)$;
   hence there exists a neighbourhood $V$ of $t_0$ in $T$ such that
   \[
    f(t)\in \varphi(U\cap \{x\in \E_1\mid \|x\| < (m + 1)\|f\|\}). \ t\in V.
   \]
   Therefore, for each $t\in T$ there exists $x_t\in U\cap \{x\in \E_1\mid \|x\| < (m +1)\|f\|\}$ such that $\varphi(x_t) = f(t)$. It follows
   that $V$ is a neighbourhood of $t_0$ such that $U\cap \Phi(t)\neq \mset$ for every $t\in V$ and we obtained that $\Phi$ is lower
   semi-continuous. Now, it is easily seen that $t\to \overline{\Phi(t)}$ is lower semi-continuous too. Moreover, from $\Phi(t)\subset
   p_1^{-1}(t)$ and the fact that $p_1^{-1}(t)$ is closed in $\E_1$ we gather that $\|x\|\leq (m + 1)\|f\|$ for each $x\in \Phi(t)$. Thus, a
   continuous selection $g$ of $t\to \overline{\Phi(t)}$ given by Theorem \ref{T:selection} is a bounded section of $\xi_1$ that satisfies
   $\tilde{\varphi}(g) = f$ and we are done with this half of the proof.

   Suppose now that $\tilde{\varphi}(\Gamma_b(\xi_1) = \Gamma_b(\xi_2)$. It is a well known consequence of the Banach open mapping theorem that
   there exists a constant $k > 0$ with the property that for every $f\in \Gamma(\xi_2)$ there exists $g\in \Gamma_b(\xi_1)$ such that $\tilde{\varphi}(g) =
   f$ and $\|g\|\leq k\|f\|$. We claim that for every $y\in \E_2$ there exists $x\in \E_1$ such that $\varphi(x) = y$ and $\|x\|\leq 3k/2\|y\|$.
   Indeed, for $y\in E_2$, $y\neq 0$, denote $t := p_2(y)$ and let $f^{\pr}\in \Gamma_b(\xi_2)$ satisfy $f^{\pr}(t) = y$. There exists a
   neighbourhood $V$ of $t$ in $T$ such that $\|f^{\pr}(t^{\pr})\| < 3/2\|f^{\pr}(t)\|$ whenever $t^{\pr}\in V$. Let $h : T\to [0,1]$ be a
   continuous function such that $h(t) = 1$ and $h\mid (T\setminus V)\equiv 0$. Then $f ;= hf^{\pr}$ satisfies $f(t) = y$ and $\|f\|\leq
   3/2\|y\|$. Let now $g\in \Gamma_b(\xi_1)$ be such that $\tilde{\varphi}(g) = f$ and $\|g\|\leq k\|f\|$. Then $\varphi(g(t)) = y$ and
   \[
    \|g(t)\|\leq \|g\|\leq k\|f\|\leq 3k/2\|y\|
   \]
   and this establishes our claim.

\end{proof}

\begin{rem}

   The 'if' direction of the above theorem in the case of a quotient map is part of \cite[Theorem 9.14]{G}.

\end{rem}

For a Banach bundle $\xi$ whose base space is locally compact Hausdorff the space $\Gamma_0(\xi)$ is of interest and we shall look now at the
behavior of such spaces under a Banach bundle map.

\begin{lem} \label{L:openset}

   Let $\xi := (\E,p,T)$ be a Banach bundle and $h : T\to (0,\infty)$ a continuous function. Then $\F := \{x\in \E\mid p(x) = t,\ \|x\| < h(t)\}$ is
   an open subset of $\E$.

\end{lem}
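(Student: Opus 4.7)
The plan is to prove this by a direct open-neighborhood argument, using the two continuity properties that are available: upper semi-continuity of the norm on $\E$ and continuity of $h$ on $T$, together with continuity of the projection $p$. I interpret the set as $\F = \{x \in \E \mid \|x\| < h(p(x))\}$, since the condition as literally written would leave $t$ free.

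First I would pick an arbitrary $x_0 \in \F$ and set $t_0 := p(x_0)$. Since $\|x_0\| < h(t_0)$, I can choose $\eps > 0$ small enough that $\|x_0\| + 2\eps < h(t_0)$. By continuity of $h$ at $t_0$, there is an open neighbourhood $V$ of $t_0$ in $T$ with $h(t) > h(t_0) - \eps$ for all $t \in V$. By upper semi-continuity of the norm on the bundle space at $x_0$, there is an open neighbourhood $U$ of $x_0$ in $\E$ with $\|x\| < \|x_0\| + \eps$ for all $x \in U$. Since $p$ is continuous, $p^{-1}(V)$ is open in $\E$, so $U \cap p^{-1}(V)$ is an open neighbourhood of $x_0$.

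Then for any $x \in U \cap p^{-1}(V)$ the chain
\[
\|x\| < \|x_0\| + \eps < h(t_0) - \eps < h(p(x))
\]
shows $x \in \F$, proving that $\F$ is a neighbourhood of each of its points and hence open.

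There is no serious obstacle here; the only point requiring a moment of care is keeping enough slack in the choice of $\eps$ so that both perturbations (of $\|x\|$ via upper semi-continuity and of $h(p(x))$ via continuity of $h$) can be absorbed simultaneously, which is why I ask for $\|x_0\| + 2\eps < h(t_0)$ at the outset rather than merely $\|x_0\| + \eps < h(t_0)$.
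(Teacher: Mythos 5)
Your proof is correct and follows essentially the same route as the paper: both arguments intersect $p^{-1}(V)$, where $V$ is a neighbourhood of $t_0$ on which $h$ stays above a threshold, with an open set on which the norm stays below that same threshold (the paper uses the global sublevel set $\{x\in\E\mid \|x\|<\tfrac12(\|x_0\|+h(t_0))\}$, you use a local neighbourhood from upper semi-continuity, which amounts to the same thing). The bookkeeping with $2\eps$ is fine and matches the paper's choice of the midpoint $\tfrac12(\|x_0\|+h(t_0))$.
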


\begin{proof}

   Let $x_0\in \F$ and put $t_0 = p(x_0)$. There exists an open neighbourhood $V$ of $t_0$ such that $h(t) > h(t_0) - 1/2(h(t_0) - \|x_0\|)$ if
   $t\in V$. Now, it is easily checked that $p^{-1}(V)\cap \{x\in \E\mid \|x\| < 1/2(\|x_0\| + h(t_0))\}\subset \F$ and we are done.

\end{proof}

\begin{thm} \label{T:Gamma0}

   Let $\xi_1 := (\E_1,p_1,T)$ be a Banach bundle and $\xi_2 := (\E_2,p_2,T)$ be a continuous Banach bundle with $T$ locally compact Hausdorff.
   Suppose that $\varphi$ is a Banach bundle map of $\E_1$ onto $\E_2$ such that $\sup_{t\in T} \|\varphi_t\| < \infty$. If there exists a
   positive constant $m$ with the property that for each $y\in \E_2$ there exists $x\in \E_1$ satisfying $\varphi(x) = y$ and $\|x\| < m\|y\|$
   then $\tilde{\varphi}(\Gamma_0(\xi_1)) = \Gamma_0(\xi_2)$.

\end{thm}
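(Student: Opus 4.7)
The natural approach is to apply Theorem~\ref{T:selection} to a set-valued map whose size is controlled \emph{pointwise} by $\|f(t)\|$, rather than by $\|f\|_\infty$ as in Theorem~\ref{T:bsections}, so that any continuous selection automatically vanishes at infinity. Given $f\in \Gamma_0(\xi_2)$, I would define
\[
\Phi(t) := \{x\in p_1^{-1}(t) : \varphi(x) = f(t),\ \|x\|\leq (m+1)\|f(t)\|\},\quad t\in T.
\]
Each $\Phi(t)$ is nonempty (the hypothesis supplies an $x$ with $\|x\| < m\|f(t)\|$ when $f(t)\neq 0_t$, and $0_t$ works when $f(t)=0_t$), convex (intersection of convex sets, since $\varphi_t$ is linear), and closed in $\E_1$ (since $\varphi$ is continuous, $p_1^{-1}(t)$ is closed, and the norm on $\E_1$ is upper semi-continuous).

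The main step is verifying that $\Phi$ is lower semi-continuous. Let $U\subseteq \E_1$ be open and $x_0\in U\cap \Phi(t_0)$. In the strict case $\|x_0\| < (m+1)\|f(t_0)\|$, I would set $U' := U\cap \{x\in \E_1 : \|x\| < (m+1)\|f(p_1(x))\|\}$; this is open in $\E_1$ because $\|\cdot\|$ on $\E_1$ is USC while $x\mapsto \|f(p_1(x))\|$ is continuous on $\E_1$ (using crucially that $\xi_2$ is a continuous Banach bundle). Proposition~\ref{P:open} applies with the global constant $m$, so $\varphi(U')$ is an open neighborhood of $f(t_0)$, and continuity of $f$ then supplies a neighborhood $V$ of $t_0$ with $f(V)\subseteq \varphi(U')$, yielding $\Phi(t)\cap U\neq \mset$ on $V$. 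When the equality $\|x_0\| = (m+1)\|f(t_0)\|$ holds with $f(t_0)\neq 0_{t_0}$, I would convex-combine $x_0$ with a cheaper lift $x'$ of $f(t_0)$ (given by the hypothesis, $\|x'\|<m\|f(t_0)\|$) to obtain an interior point of the ball that is still in $U\cap p_1^{-1}(t_0)$, reducing to the strict case. The delicate case is $f(t_0) = 0_{t_0}$, where $x_0 = 0_{t_0}$ and one cannot perturb within the fiber; here I would use that a basic tube $\{x : p_1(x)\in W,\ \|x\| < \varepsilon\}$ around $0_{t_0}$ is contained in $U$, and combine continuity of $f$ with the strict inequality $\|x_t\| < m\|f(t)\|$ from the hypothesis to produce $x_t\in U\cap \Phi(t)$ for $t$ in a suitable neighborhood of $t_0$.

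Once lower semi-continuity is in hand, Theorem~\ref{T:selection} produces a continuous section $g : T\to \E_1$ with $g(t)\in \Phi(t)$; thus $\tilde{\varphi}(g) = f$, and the pointwise estimate $\|g(t)\|\leq (m+1)\|f(t)\|$ forces $g\in \Gamma_0(\xi_1)$ because $f$ is. The reverse inclusion $\tilde{\varphi}(\Gamma_0(\xi_1))\subseteq \Gamma_0(\xi_2)$ is immediate from $\sup_t\|\varphi_t\| < \infty$. I expect the principal obstacle to lie in the lower semi-continuity verification at a point where $f$ vanishes: the norm bound collapses so one cannot perturb within the fiber, and the argument must instead exploit the local structure of $\E_1$ near the zero section together with the strict inequality afforded by the hypothesis.
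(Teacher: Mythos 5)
Your verification of lower semi-continuity is essentially sound --- indeed, by keeping the points where $f$ vanishes inside the domain of $\Phi$ and treating them via a tube around the zero section, you handle a case the paper never has to confront. The gap is at the very last step: you invoke Theorem \ref{T:selection} with $S=T$, but that theorem requires the domain of the set-valued map to be \emph{paracompact}, and here $T$ is only assumed locally compact Hausdorff. A locally compact Hausdorff space need not be paracompact (the long line is the standard example), so the selection theorem simply does not apply to your $\Phi$ as defined on all of $T$, and nothing in your argument supplies the missing hypothesis.

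The paper's proof is built around exactly this obstruction. It defines $\Phi$ only on $T^{\pr}:=\{t\in T\mid \|f(t)\|>0\}$, which is open and $\sigma$-compact because $f\in\Gamma_0(\xi_2)$ (it is the union of the compact sets $\{t\mid\|f(t)\|\geq 1/n\}$), hence paracompact by \cite[Corollary 2, p.\ 211]{E}. The selection theorem is applied over $T^{\pr}$ to produce $g^{\pr}$ with $\|g^{\pr}(t)\|\leq (m+1)\|f(t)\|$, and $g$ is then defined as $g^{\pr}$ on $T^{\pr}$ and $0_t$ elsewhere; continuity of this extension at points of $T\setminus T^{\pr}$ follows from the pointwise bound, since $\|g(t_\alpha)\|\leq (m+1)\|f(t_\alpha)\|\to 0$ along any net converging to such a point. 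This restriction also lets the paper sidestep entirely the ``delicate case'' $f(t_0)=0_{t_0}$ in the lower semi-continuity check. Your argument becomes correct once you graft this device onto it: restrict your $\Phi$ to $T^{\pr}$ (where your strict-case and equality-case analyses already suffice), select there, and extend by zero. As written, however, the proposal proves the theorem only under the additional hypothesis that $T$ is paracompact.
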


\begin{proof}

   Let $g\in \Gamma_0(\xi_1)$ and $\varepsilon > 0$. From
   \[
    \|\tilde{\varphi}(g)(t)\| = \|\varphi(g(t))\|\leq \|g(t)\|\sup_{t^{\pr}\in T} \|\varphi_{t^{\pr}}\|
   \]
   we obtain that $\{t\in T\mid \|\tilde{\varphi}(g)(t)\|\geq \varepsilon\}$ is a closed subset of the compact set \linebreak
   $\{t\in T\mid \|g(t)\|\geq
   \varepsilon/\sup_{t^{\pr}}\|\varphi_{t^{\pr}}\|\}$. Thus $\tilde{\varphi}(g)\in \Gamma_0(\xi_2)$ and we got
   $\tilde{\varphi}(\Gamma_0(\xi_1))\subset \Gamma_0(\xi_2)$.

   Let now $f\in \Gamma_0(\xi_2)$. Then $T^{\pr} := \{t\in T\mid \|f(t)\| > 0\}$ is an open $\sigma$-compact subset of $T$; it is paracompact by
   \cite[Corollary 2, p. 211]{E}. Define
   \[
    \Phi(t) := \{x\in p_1^{-1}(t)\mid \varphi(x) = f(t),\ \|x\| < (m + 1)\|f(t)\|\}
   \]
   for $t\in T^{\pr}$. Then $\Phi(t)$ is a non-void convex subset of the Banach space $p_1^{-1}(t)$. We are going to show that $t\to \Phi(t)$ is
   lower semi-continuous on $T^{\pr}$. With $U$ an open subset of $p_1^{-1}(T^{\pr})$ suppose $U\cap \Phi(t_0)\neq \mset$, $t_0\in T^{\pr}$.
   Let $x_0\in U\cap
   \Phi(t_0)$. Then $\|x_0\| < (m + 1)\|f(t_0)\|$ and $x_0\in U\cap \{x\in p_1^{-1}(T^{\pr})\mid \|x\| < 1/2(\|x_0\| + (m + 1)\|f(t_0)\|)\}$.
   The set $U^{\pr} := U\cap \{x\in p_1^{-1}(T^{\pr})\mid \|x\| < 1/2(\|x_0\| + (m + 1)\|f(t_0)\|)\}$
   is an open subset of $\E_1$ and $\varphi$ is an open map by Proposition \ref{P:open}; hence $\varphi(U^{\pr})$ is an open
   neighbourhood of $\varphi(x_0) = f(t_0)$. There exists an open neighbourhood $V$ of $t_0$ in $T^{\pr}$ such that $f(t)\in \varphi(U^{\pr})$
   and $\|f(t)\| > \|f(t_o)\| - 1/2(\|f(t_0)\| - \|x_0\|/(m+1))$ for $t\in V$. Thus, if $t\in V$ there exists $x_t\in U^{\pr}$ such that
   $\varphi(x_t) = f(t)$. Then $\|x_t\| < 1/2(\|x_o\| + (m + 1)\|f(t_0)\|) < (m + 1)\|f(t)\|$ and we have $x_t\in U\cap \Phi(t)$. We got $t_0\in
   V\subset U\cap \Phi(t)$ and we can conclude that $t\to \Phi(t)$ is indeed lower semi-continuous.

   The set valued map $t\to \overline{\Phi(t)}$, $t\in T^{\pr}$, is lower semi-continuous too and each set $\overline{\Phi(t)}$ is a non-void closed
   subset of $\{x\in p_1^{-1}(t)\mid \|x\|\leq (m + 1)\|f(t)\|\}$. By Theorem \ref{T:selection} there exists a continuous selection $g^{\pr}$ of
   $\Phi$ on $T^{\pr}$. We have $\varphi(g^{\pr}(t)) = f(t)$ and $\|g^{\pr}(t)\|\leq (m + 1)\|f(t)\|$ for each $t\in T^{\pr}$. Define now
   \[
    g(t) =
    \begin{cases}
       g^{\pr}(t), \ &\text{if $t\in T^{\pr}$},\\
       0_t, \ &\text{if $t\in T\setminus T^{\pr}$}.
    \end{cases}
   \]
   The function $g : T\to \E$ is continuous. Indeed, let $\{t_{\alpha}\}$ be a net in $T^{\pr}$ that converges to $t\in T\setminus T^{\pr}$.
   then $f(t_{\alpha})\to f(t) = 0_t$; from $0\leq \|g(t_{\alpha})\|\leq (m + 1)\|f(t_{\alpha})\|$ we derive $\|g(t_{\alpha})\|\to 0$. Hence
   $g(t_{\alpha})\to 0_t = g(t)$. If $\varepsilon > 0$ then $\{t\in T\mid \|g(t)\|\geq \varepsilon\}$ is compact since it is a closed subset of the
   compact set $\{t\in T\mid \|f(t)\|\geq \varepsilon/(m+1)\}$. We got $g\in \Gamma_0(\xi_1)$ with $\tilde{\varphi}(g) = f$ and we conclude
   $\tilde{\varphi}(\Gamma_0(\xi_1)) = \Gamma_0(\xi_2)$.

   \end{proof}

   We shall now consider results suggested by \cite{BG} and \cite{M} on the existence of right inverses for some Banach bundle maps.

   \begin{prop} \label{P:BG1}

      Let $\varphi$ be an open Banach bundle map of $\xi_1 := (\E_1,p_1,T)$ onto $\xi_2 := (\E_2,p_2,T)$. Suppose that $\E_2$ is paracompact.
      Then there exists a continuous map $\psi : \E_2\to \E_1$ such that $\varphi(\psi(y)) = y$ for every $y\in \E_2$.

   \end{prop}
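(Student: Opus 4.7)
The plan is to apply the selection theorem (Theorem \ref{T:selection}) directly, taking the base paracompact space $S$ to be $\E_2$ itself and the projection $\pi$ to be $p_2$. First I would observe that $p_2 : \E_2 \to T$ is continuous and open (openness of the bundle projection is built into the definition of a Banach subbundle/Banach bundle, via the requirement that $p_2$ be open). Hence the set-up $(S,\pi) := (\E_2,p_2)$ fits the hypothesis of Theorem \ref{T:selection}, and a continuous admissible function $\psi : \E_2 \to \E_1$ in that setup is exactly a continuous map with $p_1\circ \psi = p_2$.

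Next I would define the set valued map $\Phi : \E_2 \to 2^{\E_1}$ by
\[
 \Phi(y) := \varphi^{-1}(y),\quad y\in \E_2.
\]
I would verify the three hypotheses on $\Phi$:
(i) $\Phi(y)$ is non-void since $\varphi$ is onto $\E_2$;
(ii) $\Phi(y)$ is closed in $\E_1$ since $\varphi$ is continuous and $\{y\}$ is closed (fibers of a Banach bundle are closed, as one sees from upper semi-continuity of the norm);
(iii) $\Phi(y) \subset p_1^{-1}(p_2(y))$ follows from $p_2\circ \varphi = p_1$, and on this fiber $\varphi$ acts as a bounded linear operator, so the preimage $\varphi^{-1}(y)$ is an affine subspace and in particular convex.

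The main (but still routine) step is the lower semi-continuity of $\Phi$, and this is precisely where the openness of $\varphi$ is used. Given an open $U\subset \E_1$ and $y_0$ with $U\cap \Phi(y_0)\neq \mset$, pick $x_0\in U$ with $\varphi(x_0) = y_0$; then $\varphi(U)$ is an open neighbourhood of $y_0$ in $\E_2$, and for each $y\in \varphi(U)$ there is $x\in U$ with $\varphi(x) = y$, i.e.\ $x\in U\cap \Phi(y)$. Hence $\{y\in \E_2\mid U\cap \Phi(y)\neq \mset\}$ contains the open neighbourhood $\varphi(U)$ of $y_0$ and is therefore open. All the hypotheses of Theorem \ref{T:selection} being satisfied, there exists a continuous admissible $\psi : \E_2\to \E_1$ with $\psi(y)\in \Phi(y)$, which means $\varphi(\psi(y)) = y$ for every $y\in \E_2$, finishing the proof. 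I do not anticipate a substantive obstacle here; the content of the proposition is really the openness hypothesis on $\varphi$, which converts into lower semi-continuity of the fiberwise preimage map and allows the selection theorem to apply.
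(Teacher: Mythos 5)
Your proposal is correct and follows exactly the paper's own argument: the paper also defines $\Phi(y) := \varphi^{-1}(y)$, notes that openness of $\varphi$ gives lower semi-continuity, and applies Theorem \ref{T:selection} with $S = \E_2$ and $\pi = p_2$ (this is the "one application where $S \neq T$" alluded to before the theorem). You have merely filled in the routine verifications (non-emptiness, closedness, convexity, admissibility) that the paper leaves implicit.
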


   \begin{proof}

      Define $\Phi(y) := \varphi^{-1}(y)$, $y\in \E_2$. By using the fact that $\varphi$ is open one easily sees that $\Phi$ is lower
      semi-continuous. An application of Theorem \ref{T:selection} in the obvious manner yields a continuous selection $\psi : \E_2\to \E_1$ of
      $\Phi$ that fulfills what is needed.

   \end{proof}

   To obtain a homogeneous right inverse we shall suppose that the range of the Banach bundle map is a continuous Banach bundle. But first we
   have to state two simple lemmas. We omit their elementary proofs; the first can be proven by the means of an obvious compactness argument and
   the second lemma is an easy consequence of the first. We let $\mu$ be the normalized Lebesgue measure on the unit circle in $\mathbb{C}$ and
   $\widehat{\lambda_1\lambda_2}$ will denote the arc of the unit circle from $\lambda_1$ to $\lambda_2$ in the positive direction.

   \begin{lem} \label{L:first}

   Let $\xi_i := (\E_i,p_i,T)$, $i = 1,2$ be two Banach bundles over the complex field,  $f : \E_2\to \E_1$ a continuous function and $\{y_{\alpha}\}_{\alpha\in \A}$
   a net in $\E_2$ that converges to $y\in \E_2$. Given $\varepsilon > 0$ there exists $\delta > 0$ such that $|f(\lambda_1 y_{\alpha}) -
   f(\lambda_2 y_{\alpha})| < \varepsilon$ and $|f(\lambda_1 y) - f(\lambda_2 y)| < \varepsilon$ whenever $|\lambda_1| = |\lambda_2| = 1$,
   $\mu(\widehat{\lambda_1\lambda_2}) < \delta$ and for every $\alpha\in \A$.

   \end{lem}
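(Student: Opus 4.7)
The plan is to combine joint continuity of scalar multiplication in the Banach bundle $\E_2$, upper semi-continuity of the norm on $\E_1$, and compactness of the unit circle $\mathbb{T}\subset\mathbb{C}$. The differences $f(\lambda_1 z) - f(\lambda_2 z)$ only make sense inside one fiber of $\E_1$, so I tacitly assume that $f$ is fiber-preserving, i.e.\ $p_1\circ f = p_2$; this is the intended setting (for instance $f=\psi$ as in Proposition \ref{P:BG1}), and without it the statement cannot be parsed.

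The key construction is the auxiliary map $G\colon\mathbb{T}\times\mathbb{T}\times\E_2\to\E_1$ given by $G(\lambda_1,\lambda_2,z):=f(\lambda_1 z)-f(\lambda_2 z)$. This map is continuous, since scalar multiplication on $\E_2$ is jointly continuous by the bundle axioms, $f$ is continuous, and fiberwise subtraction is continuous on its natural domain $\{(x,x^{\pr})\in\E_1\times\E_1\mid p_1(x)=p_1(x^{\pr})\}$. Moreover $G$ vanishes on the diagonal $\lambda_1=\lambda_2$. Fix $\varepsilon>0$. Upper semi-continuity of $\|\cdot\|$ on $\E_1$ makes the set $\{(\lambda_1,\lambda_2,z):\|G(\lambda_1,\lambda_2,z)\|<\varepsilon\}$ open, and for each $\lambda\in\mathbb{T}$ it contains the point $(\lambda,\lambda,y)$; hence it contains a box of the form $V_\lambda\times V_\lambda\times W_\lambda$, with $V_\lambda\subset\mathbb{T}$ an open arc around $\lambda$ and $W_\lambda$ a neighbourhood of $y$ in $\E_2$.

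Now compactness of $\mathbb{T}$ yields a finite subcover $V_{\lambda_1},\dots,V_{\lambda_n}$. Set $W:=\bigcap_{i=1}^n W_{\lambda_i}$, a neighbourhood of $y$, and let $\delta>0$ be a Lebesgue number of the cover $\{V_{\lambda_i}\}$ for the arc-length metric, so that $\mu(\widehat{\lambda_1\lambda_2})<\delta$ forces both $\lambda_1$ and $\lambda_2$ to lie in a common $V_{\lambda_i}$. Then $\|G(\lambda_1,\lambda_2,z)\|<\varepsilon$ for every $z\in W\cup\{y\}$, which delivers the required inequality at the limit $y$ and at every $y_\alpha$ eventually belonging to $W$.

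The only mildly delicate point is the quantifier ``for every $\alpha\in\A$'': the argument just given only handles a tail of the net. For any single index, however, $\lambda\mapsto f(\lambda y_\alpha)$ is a continuous map from the compact circle into the Banach space $p_1^{-1}(p_2(y_\alpha))$ and is therefore uniformly continuous, yielding an individual $\delta_\alpha>0$. In the sequential case (or any situation in which only finitely many indices fall outside the tail), which is the setting actually needed for the averaging construction that follows, replacing $\delta$ by the minimum of the value already obtained and those finitely many $\delta_\alpha$ absorbs the exceptional indices. I expect this quantifier matter to be the only real subtlety; the compactness-plus-upper-semi-continuity backbone is precisely the ``obvious compactness argument'' the author alludes to.
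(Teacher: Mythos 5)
The paper gives no proof of this lemma at all --- it says only that it follows ``by the means of an obvious compactness argument'' --- and your argument is a correct and complete realization of exactly that: joint continuity of $(\lambda_1,\lambda_2,z)\mapsto f(\lambda_1 z)-f(\lambda_2 z)$ (which indeed requires the tacit fiber-preserving hypothesis $p_1\circ f=p_2$ you identify, satisfied by the map $h$ to which the lemma is applied), openness of the set where the norm is $<\varepsilon$ via upper semi-continuity, a finite subcover of the compact unit circle, and a Lebesgue number. Your remark on the quantifier ``for every $\alpha\in\A$'' is also substantively correct and worth keeping: for a general net the value set $\{y_\alpha\}$ need not be relatively compact (or even bounded), so the statement as literally written can fail, but the tail version your argument delivers is all that is actually invoked in the proof of Proposition \ref{P:hom}, and in the sequential case the set $\{y_\alpha\}\cup\{y\}$ is compact and the full quantifier is recovered exactly as you indicate.
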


   \begin{lem} \label{L:second}

   Let $\xi_1$, $\xi_2$, $f$, $\{y_{\alpha}\}_{\alpha\in \A}$, and $y$ be as in Lemma \ref{L:first}. Given $\varepsilon > 0$ there exists
   $\delta > 0$ with the following property: if $\{\lambda_i\}_{i=1}^n$, $\lambda_1 = \lambda_n$, is a division of the unit circle in the
   complex plane with mesh smaller than $\delta$ then
   \[
    \left|\underset{|\lambda|=1}{\int} f(\lambda y)d\mu(\lambda) - \sum_{i=1}^{n-1} f(\lambda_i y)\mu(\widehat{\lambda_i \lambda_{i+1}})\right| < \varepsilon
   \]
   and
   \[
    \left|\underset{|\lambda|=1}{\int} f(\lambda y_{\alpha})d\mu(\lambda) - \sum_{i=1}^{n-1}
                                                                 f(\lambda_i y_{\alpha})\mu(\widehat{\lambda_i \lambda_{i+1}})\right| < \varepsilon
   \]
   for every $\alpha\in \A$.

   \end{lem}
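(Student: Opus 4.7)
The plan is to read this off as a routine uniform Riemann sum estimate, with Lemma \ref{L:first} doing all the real work. Given $\varepsilon > 0$, I would first apply Lemma \ref{L:first} with the same $\varepsilon$ to obtain a $\delta > 0$ such that
\[
\|f(\lambda_1 y) - f(\lambda_2 y)\| < \varepsilon \quad \text{and} \quad \|f(\lambda_1 y_\alpha) - f(\lambda_2 y_\alpha)\| < \varepsilon
\]
for every $\alpha \in \A$ whenever $|\lambda_1| = |\lambda_2| = 1$ and $\mu(\widehat{\lambda_1 \lambda_2}) < \delta$. This same $\delta$ will witness the conclusion of the present lemma.

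Let $\{\lambda_i\}_{i=1}^n$ with $\lambda_1 = \lambda_n$ be any division of the unit circle of mesh less than $\delta$. The arcs $\widehat{\lambda_i\lambda_{i+1}}$ for $1\leq i\leq n-1$ partition the circle up to the finite set of endpoints, so I can telescope
\[
\int_{|\lambda|=1} f(\lambda y)\, d\mu(\lambda) - \sum_{i=1}^{n-1} f(\lambda_i y)\,\mu(\widehat{\lambda_i \lambda_{i+1}}) = \sum_{i=1}^{n-1} \int_{\widehat{\lambda_i \lambda_{i+1}}} \bigl(f(\lambda y) - f(\lambda_i y)\bigr)\, d\mu(\lambda).
\]
For $\lambda$ on the arc $\widehat{\lambda_i\lambda_{i+1}}$, the sub-arc $\widehat{\lambda_i\lambda}$ has measure at most $\mu(\widehat{\lambda_i\lambda_{i+1}}) < \delta$, so by the choice of $\delta$ the integrand has norm below $\varepsilon$. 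Hence each $i$-th term is bounded in norm by $\varepsilon\,\mu(\widehat{\lambda_i\lambda_{i+1}})$, and summation gives total bound $\varepsilon\cdot\mu(\{|\lambda|=1\}) = \varepsilon$.

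The second inequality is proved by the identical argument applied to $y_\alpha$, the crucial point being that the $\delta$ produced by Lemma \ref{L:first} is simultaneously good for every $\alpha\in \A$, so the estimate is automatically uniform in $\alpha$. There is no real obstacle: Lemma \ref{L:first} packages the only nontrivial content (the uniform equicontinuity of $\lambda\mapsto f(\lambda y_\alpha)$), and what remains is the standard telescoping estimate of a Bochner integral by its Riemann sums. The one small interpretive point is that for the integrals to live in a single Banach space one uses that, in the envisioned application, $\lambda\mapsto f(\lambda y)$ stays in the fiber $p_1^{-1}(p_2(y))$ — scalar multiplication preserves fibers of $\xi_2$, and $f$ is fiber-preserving — so both the integral and the Riemann sums are unambiguously defined in $p_1^{-1}(p_2(y))$, respectively $p_1^{-1}(p_2(y_\alpha))$.
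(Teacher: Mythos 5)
Your argument is correct and is precisely the routine deduction the paper has in mind: the authors omit the proof, remarking only that Lemma \ref{L:second} ``is an easy consequence of the first,'' and your telescoping Riemann-sum estimate using the uniform $\delta$ from Lemma \ref{L:first} (together with the observation that $\lambda\mapsto f(\lambda y)$ takes values in the single fiber $p_1^{-1}(p_2(y))$, so the integral is well defined) is exactly that consequence. The only cosmetic point is that to get the strict inequality $<\varepsilon$ one should invoke Lemma \ref{L:first} with $\varepsilon/2$, which changes nothing of substance.
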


   \begin{prop} \label{P:hom}

      Let $\varphi$ be a Banach bundle map from the Banach bundle $\xi_1 := (\E_1,p_1,T)$ onto the continuous Banach bundle $\xi_2 :=
      (\E_2,p_2,T)$. Suppose that $\E_2$ is paracompact and there exists $m > 0$ with the propriety that for every $y\in \E_2$
      there exists $x\in \E_1$ such that $\varphi(x) = y$ and
      $\|x\|\leq m\|y\|$. Then there exists a continuous map $\psi := \E_2\to \E_1$ such that $\varphi(\psi(y)) = y$ and $\psi(\lambda y) =
      \lambda\psi(y)$ for every $y\in \E_2$ and $\lambda\in \mathbb{K}$.

   \end{prop}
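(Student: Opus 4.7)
The plan is to follow the classical Bartle--Graves--Michael strategy, adapted to Banach bundles via Theorem~\ref{T:selection}: first obtain a continuous right inverse with linear growth; then symmetrize over the unit circle (complex case) or over $\set{\pm 1}$ (real case) to get homogeneity for scalars of modulus one; and finally rescale radially to obtain full $\mathbb{K}$-homogeneity.

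For the first step, the hypothesis on $m$ together with Proposition~\ref{P:open} (taking the cover $\set{T}$) makes $\varphi$ an open map. I would define
\[
 \Phi(y) := \overline{\set{x \in \varphi^{-1}(y) : \|x\| < (m+1)\|y\|}} \text{ for } y \neq 0_{p_2(y)}, \qquad \Phi(0_t) := \set{0_t},
\]
which takes non-empty closed convex values in the fibers of $\xi_1$. Arguing as in the proof of Theorem~\ref{T:bsections}, and using the openness of $\varphi$, the upper semi-continuity of the norm on $\E_1$, the continuity of the norm on $\E_2$ (to handle the $y$-dependent bound), and the tube base at points of the zero section of $\xi_1$, one checks that $\Phi$ is lower semi-continuous on $\E_2$. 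Since $\E_2$ is paracompact, Theorem~\ref{T:selection} (with $S = \E_2$ and $\pi = p_2$) produces a continuous $\psi_0 : \E_2 \to \E_1$ satisfying $\varphi \circ \psi_0 = \Id$ and $\|\psi_0(y)\| \leq (m+1)\|y\|$.

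For the symmetrization, in the real case I would take $\psi_1(y) := \tfrac{1}{2}(\psi_0(y) - \psi_0(-y))$; in the complex case
\[
 \psi_1(y) := \int_{|\lambda|=1} \bar{\lambda}\,\psi_0(\lambda y)\,d\mu(\lambda),
\]
interpreted as a Bochner integral in the single Banach space $p_1^{-1}(p_2(y))$ (note $p_2(\lambda y) = p_2(y)$). The key point is continuity of $y \mapsto \psi_1(y)$: given a net $y_\alpha \to y$, Lemmas~\ref{L:first} and~\ref{L:second} supply circle partitions $\set{\lambda_i}$ whose Riemann sums $\sum \bar{\lambda}_i\,\psi_0(\lambda_i y_\alpha)\,\mu(\widehat{\lambda_i \lambda_{i+1}})$ approximate $\psi_1(y_\alpha)$ uniformly in $\alpha$, reducing continuity of $\psi_1$ to continuity of these finite sums, which is immediate from continuity of $\psi_0$. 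Translation invariance of $\mu$ (resp. the symmetrization) yields $\psi_1(\lambda_0 y) = \lambda_0 \psi_1(y)$ for $|\lambda_0| = 1$ (resp. $\lambda_0 \in \set{\pm 1}$); fiberwise linearity of $\varphi$ gives $\varphi \circ \psi_1 = \Id$; and $\|\psi_1(y)\| \leq (m+1)\|y\|$ is inherited from $\psi_0$.

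Finally I would set $\psi(y) := \|y\|\,\psi_1(y/\|y\|)$ for $y \neq 0_{p_2(y)}$ and $\psi(0_t) := 0_t$. Continuity of the norm on $\E_2$ (this is where $\xi_2$ being a continuous Banach bundle is used) makes $y \mapsto y/\|y\|$ continuous off the zero section, so $\psi$ is continuous there; continuity at points $0_t$ follows from $\|\psi(y)\| \leq (m+1)\|y\|$ and $p_1(\psi(y)) = p_2(y)$ via the tube base at $0_t$. Writing $\lambda = |\lambda|\,(\lambda/|\lambda|)$ and combining positive-real homogeneity of the rescaling with Step~2 yields $\psi(\lambda y) = \lambda \psi(y)$, while $\varphi(\psi(y)) = y$ follows by fiberwise linearity of $\varphi$. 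The main technical obstacle is the middle step: because the integrands $\bar{\lambda}\psi_0(\lambda y)$ live in Banach-space fibers that vary with $y$, no fixed ambient space is available to make a direct Bochner-integral continuity argument work, and Lemmas~\ref{L:first} and~\ref{L:second} are precisely the device that circumvents this by giving uniform-in-$\alpha$ Riemann-sum approximations along convergent nets $y_\alpha \to y$.
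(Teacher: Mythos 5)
Your proposal is correct and follows essentially the same route as the paper: openness of $\varphi$ via Proposition~\ref{P:open}, a continuous selection from Theorem~\ref{T:selection}, and circle-averaging (or $\pm1$-symmetrization) whose continuity is justified by the uniform Riemann-sum approximations of Lemmas~\ref{L:first} and~\ref{L:second}. The only organizational difference is that the paper applies the selection theorem on the unit sphere $\{y\in\E_2 : \|y\|=1\}$ with the uniform bound $m+1$ and then extends radially \emph{before} symmetrizing, which spares it your extra lower semi-continuity arguments at the zero section and for the $y$-dependent bound $(m+1)\|y\|$.
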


   \begin{proof}

      The set $S := \{y\in \E_2\mid \|y\| = 1\}$ is closed in $\E_2$ thus paracompact. We define $\Phi(y) := \{x\in \E_1\mid \varphi(x) = y,
      \|x\| < m + 1\}$, $y\in S$. Then $\Phi(y)$ is a non-empty convex subset of $p_1^{-1}(p_2(y))$ and we want to show that $\Phi$ is lower
      semi-continuous. So let $U$ be an open subset of $\E_1$ and suppose $U\cap \Phi(y_0)\neq \mset$. Then $U^{\pr} :=
      \varphi(U\cap \{x\in \E_1\mid \|x\| < m + 1\})$ is an open subset of $\E_2$ since $\varphi$ is an open map by Proposition \ref{P:open} and
      $y_0\in U^{\pr}\cap F$. If $y\in U^{\pr}\cap F$ then there exists $x\in U$ such that $\|x\| < m + 1$ and $\varphi(x) = y$. Thus $U\cap
      \Phi(y)\neq \mset$ if $y\in U^{\pr}\cap F$ and we have proved that $\Phi$ is lower semi-continuous. The map $y\to \overline{\Phi(y)}$ is
      also lower semi-continuous and Theorem \ref{T:selection} yields a continuous function $g : F\to \E_1$ such that $\varphi(g(y)) = y$ and
      $\|g(y)\|\leq m + 1$ for $y\in F$. By defining
      \[
       h(y) :=
       \begin{cases}
          \|y\|g(y/\|y\|), \ &\text{if $\|y\|\neq 0$},\\
          0, \ &\text{otherwise}
       \end{cases}
      \]
      for $y\in \E_2$ one gets a continuous function from $\E_2$ to $\E_1$ such that $\varphi(h(y)) = y$ for $y\in \E_2$..
      If the scalars are real then one easily checks that $\psi (y) := 1/2(h(y) - h(-y))$ has the required properties.
      Suppose now that the scalars are complex. Then we define
      \[
       \psi(y) := \underset{|\lambda|=1} {\int} \bar{\lambda}h(\lambda y)d\mu(\lambda),\quad y\in \E_2.
      \]
      The function $\psi$ has all the needed properties. We shall prove only its continuity all its other attributes being straightforward to
      check. So let $y\in \E_2$ and suppose $\{y_{\alpha}\}_{\alpha\in \A}$ is a net that converges to $y$. We denote $t := p_2(y)$ and
      $t_{\alpha} := p_2(y_{\alpha})$. Let $U$ be an open neighbourhood of $\psi(y)$. Now $T$ is paracompact being
      homeomorphic to the closed subspace $\{0_{\tau}\}_{\tau\in T}$ of $\E_2$. So there exist a section $\sigma$ of $\xi_1$, a neighbourhood
      $V$ of $t$ contained in $p_1(U)$ and $\varepsilon > 0$ such that $\sigma(t) = \psi(y)$ and
      \[
       U^{\pr} := \{x\in \E_1\mid p_1(x)\in V, \quad \|x - \sigma(p_1(x))\| < 2\varepsilon\}\subset U.
      \]
      Let now $\delta$ be the positive number given by
      Lemma \ref{L:second} for our number $\varepsilon$ and the integrant appearing in the definition of $\psi$. Suppose $\{\lambda_i\}_{i=1}^n$,
      $\lambda_1 = \lambda_n$, is a division of the unit circle of mesh less than $\delta$. Then $\|\psi(y) - \sum_{i=1}^{n-1}
      \bar{\lambda}_ih(\lambda_i y)\mu(\widehat{\lambda_1 \lambda_{i+1}})\| < \varepsilon$. We denote
      \[
       U^{\pr \pr} := \{x\in \E_1\mid p_1(x)\in V,\quad \|x - \sigma(p_1(x))\| < \varepsilon\};
      \]
      then $\sum_{i=1}^{n-1} \bar{\lambda}_ih(\lambda_i
      y)\mu(\widehat{\lambda_i \lambda_{i+1}})$ belongs to the open set $U^{\pr \pr}$. There exists $\alpha_0\in \A$ such that if
      $\alpha\succ \alpha_0$ then $\sum_{i=1}^{n-1} \bar{\lambda}_ih(\lambda_i y_{\alpha})\mu(\widehat{\lambda_i \lambda_{i+1}})$ also belongs to
      $U^{\pr \pr}$. Thus we have $\|\sum_{i=1}^{n-1} \bar{\lambda}_ih(\lambda_i y_{\alpha})\mu(\widehat{\lambda_i \lambda_{i+1}}) -
      \sigma(p_2(y_{\alpha}))\| < \varepsilon$ if $\alpha \succ \alpha_0$ and
      \[
       \|\psi(y_{\alpha}) - \sum_{i=1}^{n-1} \bar{\lambda}_ih(\lambda_i y_{\alpha})\mu(\widehat{\lambda_i \lambda_{i+1}})\| < \varepsilon.
      \]
      We obtained that $\psi(y_{\alpha})\in U^{\pr}\subset U$ if $\alpha$
      is large enough so we conclude that $\{\psi(y_{\alpha})\}$ converges to $\psi(y)$.

   \end{proof}

   \begin{rem}

      It follows from the proof of Proposition \ref{P:hom} that the map $\psi$ satisfies $\|\psi(y)\|\leq (m + 1)\|y\|$ for every $y\in \E_2$. Thus one
      can easily obtain a proof of Theorem \ref{T:Gamma0} by using Proposition \ref{P:hom} if in addition to the other hypotheses of that
      theorem one supposes that $\E_2$ is paracompact.

   \end{rem}

   \section{M-ideals of $\Gamma_0(\xi)$} \label{S:M-ideals}

   For the present section $\xi := (\E,p,T)$ will be a fixed Banach bundle. We shall mainly discuss the M-ideals
   of the Banach space $\Gamma_0(\xi)$ when the base space is locally compact Hausdorff; most of the results that follow are generalizations of results from \cite{G} where only the case of a
   compact Hausdorff base space was considered. As in \cite{G}, for a closed subset $A$ of $T$ the notation $N_A$ will stand for $\{\sigma\in
   \Gamma_b \mid \sigma(t) = 0_t \quad \text{for all} \quad t\in A\}$ or for $\{\sigma\in \Gamma_0(\xi) \mid \sigma(t) = 0_t \quad \text{for all} \quad t\in
   A\}$ if $T$ is locally compact Hausdorff; it will be clear from the context which is the case. If $A$ is a singleton, say $\{t\}$, then we
   shall use the notation $N_t$ instead of $N_{\{t\}}$.

   In \cite[Proposition 13.6]{G} the base space is assumed to be compact Hausdorff. However the proof given there is valid
   with obvious modifications in the more general situations detailed in the next Proposition. Of course, we omit its proof.

   \begin{prop} \label{P:M-id}

      The closed subspace $N_A$ is an M-ideal of $\Gamma_b(\xi)$ if $A$ is a closed subset of the normal space $T$.
      For a locally compact Hausdorff base space $T$ $N_A$ is an M-deal of $\Gamma_0(\xi)$ if $A$ is a compact
      subset of $T$ or if $A$ is a closed subset of $T$ such that the complement of its interior is compact.

   \end{prop}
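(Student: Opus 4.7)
The plan is to verify the 3-ball characterization of M-ideals directly, in the spirit of Gierz's argument for the compact base case. Given $\sigma_1,\sigma_2,\sigma_3$ in the ambient space (either $\Gamma_b(\xi)$ or $\Gamma_0(\xi)$) and radii $r_i>0$ with $\bigcap_{i=1}^3 B(\sigma_i,r_i)\neq\mset$ and $B(\sigma_i,r_i)\cap N_A\neq\mset$ for each $i$, I pick $\sigma\in\bigcap_{i=1}^3 B(\sigma_i,r_i)$ and $\tau_i\in B(\sigma_i,r_i)\cap N_A$. Since $\tau_i$ vanishes on $A$, $\|\sigma_i-\tau_i\|<r_i$ forces $\sup_{t\in A}\|\sigma_i(t)\|<r_i$, so I can choose a common $\delta>0$ with
\[
 \|\sigma-\sigma_i\|\leq r_i-\delta \quad\text{and}\quad \sup_{t\in A}\|\sigma_i(t)\|\leq r_i-\delta,\qquad i=1,2,3.
\]

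The candidate section is $\tau(t):=h(t)\sigma(t)$ for a continuous $h:T\to[0,1]$ that vanishes on $A$ and equals $1$ where the $\|\sigma_i(t)\|$ are not uniformly smaller than the $r_i$. I set
\[
 W := \{t\in T \mid \|\sigma_i(t)\|<r_i-\delta/2,\ i=1,2,3\},
\]
which is open by upper semi-continuity of the norm on $\E$ and contains $A$ by the choice of $\delta$. The goal is then to produce $h$ with $h|_A\equiv 0$ and $h|_{T\setminus W}\equiv 1$ in each of the three settings. For $T$ normal and $A$ closed, this is Urysohn's lemma. For $T$ locally compact Hausdorff and $A$ compact, the Urysohn lemma with compact supports, applied to the compact set $A$ inside the open set $W$, produces $f:T\to[0,1]$ supported in $W$ with $f|_A\equiv 1$, and $h:=1-f$ does the job. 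For $T$ locally compact Hausdorff with $T\setminus\mathrm{int}(A)$ compact, the inclusions $T\setminus W\subset T\setminus A\subset T\setminus\mathrm{int}(A)$ make $T\setminus W$ compact, so the compactly supported Urysohn lemma applies with the roles of $A$ and $T\setminus W$ interchanged.

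It remains to verify that $\tau=h\sigma$ is a valid witness. Continuity of scalar multiplication on $\E$ makes $\tau$ a continuous section; $|h|\leq 1$ keeps $\tau$ in $\Gamma_b(\xi)$, and the bound $\|\tau(t)\|\leq\|\sigma(t)\|$ keeps $\tau$ in $\Gamma_0(\xi)$ whenever $\sigma$ lies there. Clearly $\tau\in N_A$. For each $i$ and each $t$ one has
\[
 \|\tau(t)-\sigma_i(t)\|\leq h(t)\|\sigma(t)-\sigma_i(t)\|+(1-h(t))\|\sigma_i(t)\|,
\]
which is at most $r_i-\delta$ when $t\notin W$ (then $h(t)=1$) and at most $h(t)(r_i-\delta)+(1-h(t))(r_i-\delta/2)\leq r_i-\delta/2$ when $t\in W$; hence $\|\tau-\sigma_i\|\leq r_i-\delta/2<r_i$ for each $i$, confirming the 3-ball property. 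The main obstacle is the case analysis for producing $h$: in the $\Gamma_0$ setting $T$ need not be normal, so one must lean on the compactness of either $A$ or $T\setminus\mathrm{int}(A)$ to invoke a Urysohn-type separation in place of ordinary normality, and the rest of the argument is a transcription of Gierz's compact-base proof.
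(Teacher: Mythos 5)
Your proof is correct: the $3$-ball verification with the cut-off $\tau=h\sigma$, the open set $W$ obtained from upper semi-continuity of the norm, and the three Urysohn-type constructions of $h$ (normality for $\Gamma_b$; compactness of $A$, respectively of $T\setminus W\subset T\setminus\mathrm{int}(A)$, for $\Gamma_0$) all check out. The paper itself omits the proof, stating only that Gierz's compact-base argument ``is valid with obvious modifications''; what you have written is precisely that argument together with the required modifications, so there is nothing to compare beyond noting that you have made the omitted details explicit.
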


   \begin{prop} \label{P:null}

      Suppose $T$ is paracompact (locally compact Hausdorff) and $A$ and $B$ are closed (compact, respectively) subsets of $T$. Then $N_{A\cap
      B}  = N_A + N_B$.

   \end{prop}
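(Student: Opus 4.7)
The direction $N_A + N_B \subseteq N_{A\cap B}$ is immediate: a section of the form $\sigma_A + \sigma_B$ with $\sigma_A|_A \equiv 0$ and $\sigma_B|_B \equiv 0$ vanishes at every point of $A\cap B$. The content is in the reverse inclusion.

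Given $\sigma\in N_{A\cap B}$, my plan is to produce $\sigma_B\in N_B$ satisfying the additional requirement $\sigma_B|_A = \sigma|_A$; then $\sigma_A := \sigma - \sigma_B$ will automatically belong to $N_A$, and $\sigma = \sigma_A + \sigma_B$ is the desired decomposition. The natural candidate for $\sigma_B$ on $A\cup B$ is
\[
\rho(t) := \begin{cases} \sigma(t), & t\in A,\\ 0_t, & t\in B, \end{cases}
\]
which is well defined because $\sigma$ vanishes on $A\cap B$. Assuming $\rho$ is continuous on $A\cup B$, it is a section of $\xi|_{A\cup B}$, and I would then invoke Corollary \ref{C:ext}: in the paracompact case $A\cup B$ is closed in $T$ and $\|\rho(t)\|\leq \|\sigma\|$ everywhere, so the bounded-extension clause yields $\sigma_B \in \Gamma_b(\xi)\cap N_B$; in the locally compact case $A\cup B$ is compact, and the third clause of Corollary \ref{C:ext} produces $\sigma_B \in \Gamma_0(\xi)\cap N_B$.

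The only step that requires genuine care is the continuity of $\rho$ on $A\cup B$. At a point $t\in A\setminus B$ the open set $T\setminus B$ meets $A\cup B$ only inside $A$, where $\rho$ coincides with the continuous section $\sigma$; the case $t\in B\setminus A$ is symmetric, with $\rho$ coinciding with the zero cross-section. The delicate case is $t\in A\cap B$, where the two branches of the definition meet and $\rho(t) = 0_t = \sigma(t)$. Given an open neighborhood $W$ of $0_t$, continuity at $t$ of both $\sigma$ (which vanishes at $t$) and of the zero cross-section $s\mapsto 0_s$ supplies an open neighborhood $V$ of $t$ such that $\sigma(V)\subseteq W$ and $\{0_s\mid s\in V\}\subseteq W$; then $\rho$ sends $V\cap (A\cup B)$ into $W$, regardless of which branch of its definition applies to a given point. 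This glueing check is the one nontrivial obstacle; everything else reduces to invoking the extension corollary in the appropriate form.
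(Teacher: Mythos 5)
Your proposal is correct and follows essentially the same route as the paper: the paper also defines $\rho$ on $A\cup B$ by $\rho(t)=\sigma(t)$ on $A$ and $\rho(t)=0_t$ on $B$, extends it via Corollary \ref{C:ext} to get $\rho_B\in N_B$, and sets $\rho_A:=\sigma-\rho_B\in N_A$. The only difference is that you spell out the glueing/continuity check for $\rho$ at points of $A\cap B$, which the paper asserts without proof; your verification is valid (the zero cross-section is continuous, and $B$ is closed in both cases, so the local analysis at points of $A\setminus B$ and $B\setminus A$ goes through).
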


   \begin{proof}

      We must prove only $N_{A\cap B}\subset N_A + N_B$. Let $\sigma\in N_{A\cap B}$ and define $\rho : A\cup B\to \E$ by $\rho(t) := \sigma(t)$
      if $t\in A$ and $\rho(t) := 0_t$ if $t\in B$. Then $\rho$ is well defined and continuous on $A\cup B$. Corollary \ref{C:ext} provides us
      with an extension of $\varrho$ in $\Gamma_b(\xi)$ ($\Gamma_0(\xi)$, respectively); we shall call this extension $\rho_B$. We have $\rho_B\in
      N_B$ and $\rho_A := \sigma - \rho_B$ is in $A$.

   \end{proof}

   Proposition \ref{P:null} appeared as \cite[Corollary 15.8]{G} for $T$ compact Hausdorff and was given there a different proof from the one above.

   An M-ideal $L$ of a Banach space $X$ is called primitive if there is an extreme point $\omega$ of the closed unit ball of the dual space
   $X^{\pr}$ such that $L$ is maximal among the M-ideals contained in the null subspace of $\omega$. If $L$ is a primitive M-ideal and $L_1$,
   $L_ 2$ are M-ideals such that $L_1\cap L_2\subset L$ then $L_1\subset L$ or $L_2\subset L$. Every M-ideal is the intersection of the primitive
   M-ideals containing it. For all these facts see \cite[Section 3]{AE}.

   In the remainder of this section the base space $T$ will always be a locally compact Hausdorff space.

   The case $T$ compact Hausdorff of the following proposition is part of \cite[Proposition 13.11]{G}.

   \begin{prop} \label{P:prim}

      If $L$ is a primitive M-ideal of $\Gamma_0(\xi)$ then there is a unique $t_0\in T$ such that $N_{t_0}\subset L$.

   \end{prop}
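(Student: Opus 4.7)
The plan is to split the argument into uniqueness and existence of $t_0$. Uniqueness will come cheaply from Proposition \ref{P:null}, while existence rests on a localization analysis of the extreme functional $\omega$ witnessing the primitivity of $L$.

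\textbf{Uniqueness.} Suppose $N_{t_1}, N_{t_2}\subseteq L$ with $t_1\neq t_2$. Applying Proposition \ref{P:null} to the compact sets $\{t_1\}$ and $\{t_2\}$ yields
\[
 N_{t_1}+N_{t_2}\;=\;N_{\{t_1\}\cap\{t_2\}}\;=\;N_{\mset}\;=\;\Gamma_0(\xi),
\]
so $\Gamma_0(\xi)\subseteq L$. But by definition of primitivity, $L\subseteq \ker\omega$ for some extreme point $\omega$ of the closed unit ball of $\Gamma_0(\xi)^*$; since an extreme point of that ball is never zero, $L$ must be a proper subspace, a contradiction.

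\textbf{Existence.} Let $\omega$ be the extreme point of the unit ball of $\Gamma_0(\xi)^*$ such that $L$ is maximal among M-ideals contained in $\ker\omega$. The main step is to localize $\omega$: produce $t_0\in T$ with $N_{t_0}\subseteq\ker\omega$. If no such point existed, then for each $t\in T$ one could choose a section $\sigma_t\in N_t$ with $\omega(\sigma_t)\neq 0$; combining such sections (extended, if necessary, via Corollary \ref{C:ext}) with compactly supported scalar cutoffs on $T$, one could write $\omega$ as a nontrivial convex combination of two norm-one functionals with ``disjoint supports'' on $T$, contradicting the extremeness of $\omega$. This is the Singer-type scheme that drives the compact-base case in \cite[Proposition 13.11]{G}, adapted to the locally compact setting by replacing global partitions of unity with compactly supported ones and working throughout inside $\Gamma_0(\xi)$.

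Once $t_0$ is produced, Proposition \ref{P:M-id} guarantees that $N_{t_0}$ is an M-ideal of $\Gamma_0(\xi)$, and by construction $N_{t_0}\subseteq\ker\omega$. The closed sum $\overline{L+N_{t_0}}$ is itself an M-ideal of $\Gamma_0(\xi)$ (the M-ideals of a Banach space form a complete lattice under closed sums, by Alfsen--Effros \cite{AE}); it is still contained in $\ker\omega$ and it contains $L$. Maximality of $L$ then forces $\overline{L+N_{t_0}}=L$, so $N_{t_0}\subseteq L$.

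\textbf{Main obstacle.} The technical heart of the proof is the localization of the extreme functional $\omega$ to a single base point; once this is in hand, the rest is a routine application of Propositions \ref{P:M-id} and \ref{P:null}, the definition of primitivity, and the standard lattice property of M-ideals from \cite{AE}. The passage from the compact to the locally compact case only complicates this localization step, where one must work with sections that vanish at infinity and cut off with compactly supported scalars rather than globally supported ones.
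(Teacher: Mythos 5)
Your uniqueness argument is fine and is essentially the cleanest way to get it. The problem is the existence half: its entire content is the localization claim ``there is $t_0$ with $N_{t_0}\subseteq\ker\omega$'', and you do not prove it --- you gesture at a ``Singer-type scheme'' and at the compact-base case. The passage from ``for every $t$ there is $\sigma_t\in N_t$ with $\omega(\sigma_t)\neq 0$'' to ``$\omega$ is a nontrivial convex combination of two norm-one functionals with disjoint supports'' is exactly where all the work lies, and as written it does not follow: a section vanishing at a single point need not vanish near that point, so you would first have to define the support of $\omega$, show it is non-void (this uses density of the compactly supported sections, and in the $\Gamma_0$ setting one must also rule out mass escaping to infinity), show that a one-point support already yields $N_{t_0}\subseteq\ker\omega$ (an approximation argument resting on the L-decomposition of the dual coming from the M-ideals $N_A$), and only in the two-point case produce the convex decomposition contradicting extremeness. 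None of this is carried out, so the proof has a genuine gap at its central step; the concluding paragraph (maximality of $L$ plus the closed sum $\overline{L+N_{t_0}}$ being an M-ideal) is correct but rests on the unproved claim.

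The paper avoids the functional $\omega$ entirely and works only with the ``prime'' property of primitive M-ideals recorded just before the proposition: if $L_1\cap L_2\subseteq L$ for M-ideals $L_1,L_2$, then $L_1\subseteq L$ or $L_2\subseteq L$. For each non-void compact $K$ one has $N_K\cap N_{\overline{T\setminus K}}=\{0\}\subseteq L$, so $N_K\subseteq L$ or $N_{\overline{T\setminus K}}\subseteq L$, and density of the compactly supported sections shows the second alternative cannot hold for every $K$. The family $\F$ of non-void compact sets $K$ with $N_K\subseteq L$ has the finite intersection property by Proposition \ref{P:null} (which you already invoke), its intersection $K_0$ satisfies $N_{K_0}=\overline{\cup_{K\in\F}N_K}\subseteq L$, and splitting $K_0$ into two proper compact pieces and applying the prime property once more shows $K_0$ is a singleton. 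If you want to keep your route you must actually prove the localization of extreme functionals for $\Gamma_0(\xi)$; otherwise the paper's argument is both complete and considerably more elementary.
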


   \begin{proof}

      Let $K$ be a non-void compact subset of $T$. Then $N_K$ and $N_{\overline{T\setminus K}}$ are M-ideals of $\Gamma_0(\xi)$ by Proposition
      \ref{P:M-id} and $N_K\cap N_{\overline{T\setminus K}} =  \{0\}\subset L$. We infer that $N_K$ or $N_{\overline{T\subset K}}$ is included
      in $L$. If for every such $K$ we have $N_{\overline{T\setminus K}}\subset L$ then $L = \Gamma_0(\xi)$ since the sections with compact
      support are dense in $\Gamma_0(\xi)$. Thus there exists a non-void compact set $K$ such that $N_K\subset L$. Let $\F$ be the non-void family of all
      non-void compact subsets of $T$ such that $N_K\subset L$. If $K_1$ and $K_2$ belong to $\F$ then $N_{K_1\cap K_2} = N_{K_1} +
      N_{K_2}\subset L$ and it follows that $\F$ has the finite intersection property. Hence $K_0 := \cap\{K\mid K\in \F\}\neq \mset$.

      Denote
      \[
       \Lambda := \overline{\underset{K\in \F}{\cup} N_K}.
      \]
      We claim that $N_{K_0} = \Lambda$. Obviously $\Lambda\subset N_{K_0}$. Let now $\sigma\in N_{K_0}$ and $\varepsilon > 0$. Set $V := \{t\in
      T\mid \|\sigma(t)\| < \varepsilon\}$; then $V$ is open and $K_0\subset V$. If it were true that for each $K\in \F$ there would be $t_K\in
      K\setminus V$ then the net $\{t_K\}_{K\in \F}$ would have a convergent subnet whose limit would be in $K_0$. On the other hand,
      $\|\sigma(t_k)\| \geq \varepsilon$ for all $K\in \F$, a contradiction. Thus there exists $K\in \F$ such that $K\subset V$. Let $f :T\to
      [0,1]$ be a continuous function such that $f\mid K\equiv 0$ and $f\mid (T\setminus V)\equiv 1$. Then with $\sigma^{\pr} := f\cdot \sigma$ we
      have $\sigma^{\pr}\in N_K$ and $\|\sigma - \sigma^{\pr}\| < \varepsilon$. We infer that $\sigma\in \Lambda$ and the claim follows
      from this. In particular, $K_0\in \F$.

      It remains only to show that $K_0$ consists of only one point. If there are in $K_0$ at least two distinct points then there exist compact
      subsets $K_1,K_2$ of $K_0$ such that $K_0 = K_ 1\cup K_2$ and $K_1\neq K_0\neq K_2$. These subsets will have to satisfy $N_{K_1}\cap
      N_{K_2} = N_{K_0}\subset L$. Hence $N_{K_1}\subset L$ or $N_{K_2}\subset L$. But $K_i\in \F$, $i = 1\  \text{or}\  2$, and $K_i\subsetneqq K_0$
      is a contradiction. Therefore $K_0 = \{t_0\}$ for some $t_0\in T$ and we found $N_{t_0}\subset L$.

      The uniqueness of $t_0$ with the property $N_{t_0}\subset L$ follows from the definitions of $\F$ and $K_0$.

   \end{proof}

   Now we can apply the same proof as in the compact case of \cite[Proposition 15.20]{G} to get

   \begin{prop} \label{P:module}

      Every M-ideal $L$ of $\Gamma_0(\xi)$ is a $C_b(T)$-module.

   \end{prop}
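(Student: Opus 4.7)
The plan is to reduce to the case of a primitive M-ideal via the Alfsen--Effros representation mentioned in the preceding paragraph: every M-ideal of a Banach space is the intersection of the primitive M-ideals that contain it. Since the intersection of any family of $C_b(T)$-submodules of $\Gamma_0(\xi)$ is again a $C_b(T)$-submodule, it suffices to show that every primitive M-ideal $L$ of $\Gamma_0(\xi)$ is a $C_b(T)$-module.

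Fix such a primitive $L$. By Proposition \ref{P:prim} there is a (unique) $t_0 \in T$ with $N_{t_0} \subset L$. Given $\sigma \in L$ and $f \in C_b(T)$, I first note that the pointwise product $f\sigma$, defined by $(f\sigma)(t) := f(t)\sigma(t)$, is a continuous section of $\xi$ (continuity of scalar multiplication on $\E$) satisfying $\|(f\sigma)(t)\| \le \|f\|_\infty \|\sigma(t)\|$, so $f\sigma \in \Gamma_0(\xi)$ because $\sigma$ does.

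The key step is the decomposition
\[
 f\sigma \;=\; f(t_0)\,\sigma \;+\; (f - f(t_0))\,\sigma.
\]
The first summand is a scalar multiple of $\sigma$, hence lies in the linear subspace $L$. The second summand is a section in $\Gamma_0(\xi)$ that vanishes at $t_0$ (since $f(t_0) - f(t_0) = 0$), so it belongs to $N_{t_0}$, and therefore to $L$. Adding, $f\sigma \in L$, which shows $L$ is a $C_b(T)$-module.

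The only potentially delicate point is the appeal to the Alfsen--Effros representation theorem for reducing to primitives; this is explicitly recorded in the paragraph preceding Proposition \ref{P:prim} and in \cite[Section 3]{AE}, so it is available without further work. Once that reduction is in place the rest is essentially the scalar-trick that $f - f(t_0)$ vanishes at the distinguished point $t_0$ attached to a primitive, which is exactly what the null-ideal $N_{t_0}$ is designed to absorb.
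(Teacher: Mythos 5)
Your proof is correct and is essentially the paper's own argument: reduce to the primitive case via the Alfsen--Effros fact that every M-ideal is the intersection of the primitive M-ideals containing it, then use Proposition \ref{P:prim} to find $t_0$ with $N_{t_0}\subset L$ and observe that $f\cdot\sigma - f(t_0)\sigma\in N_{t_0}\subset L$. The extra details you supply (that $f\sigma\in\Gamma_0(\xi)$ and that intersections of submodules are submodules) are routine and only make explicit what the paper leaves implicit.
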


   \begin{proof}

      Suppose first that $L$ is a primitive M-ideal and let $t_0\in T$ be the element given by Proposition \ref{P:prim}. If $f\in C_b(T)$ and
      $\sigma\in L$ then $f\cdot \sigma - f(t_0)\sigma$ belongs to $N_{t_0}\subset L$. Hence $f\cdot \sigma\in L$ and the case of a primitive
      M-ideal is settled. The general case follows from this particular case and the fact mentioned above that each M-ideal is the intersection
      of the primitive M-ideals containing it.

   \end{proof}

   We proceed now to give a characterization of those closed subspaces of $\Gamma_0(\xi)$ that are $C_b(T)$-modules. The parallel result for paracompact
   base spaces and spaces of bounded sections is \cite[Theorem 8.6]{G}. First we state and prove a simple lemma.

   \begin{lem} \label{L:approx}

      Let $L$ be an M-ideal of $\Gamma_0(\xi)$, $\sigma\in L$, and suppose that $\|\sigma(t_0)\| < \theta$ for some $t_0\in T$ and $\theta > 0$.
      Then there is $\sigma^{\pr}\in L$ such that $\sigma^{\pr}(t_0) = \sigma(t_0)$ and $\|\sigma^{\pr}\| < \theta$.

   \end{lem}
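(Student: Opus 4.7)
The plan is to construct $\sigma^{\pr}$ by multiplying $\sigma$ by a scalar-valued cut-off function that equals $1$ at $t_0$ and vanishes off a neighbourhood small enough to keep $\|f\sigma\|$ strictly below $\theta$. The key input is the already-established Proposition \ref{P:module}: since $L$ is a $C_b(T)$-module, any such product $f\cdot \sigma$ will automatically stay inside $L$.

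First I would exploit the upper semi-continuity of $x\mapsto \|x\|$ on $\E$, which is part of the standing definition of a Banach bundle. Composing with the continuous section $\sigma$ shows that $t\mapsto \|\sigma(t)\|$ is upper semi-continuous on $T$. Choosing any $r$ with $\|\sigma(t_0)\|<r<\theta$, the set $V:=\{t\in T\mid \|\sigma(t)\|<r\}$ is then an open neighbourhood of $t_0$ on which $\sigma$ is uniformly bounded in norm by $r$. Because $T$ is locally compact Hausdorff, Urysohn's lemma produces $f\in C_c(T)\subset C_b(T)$ with $0\leq f\leq 1$, $f(t_0)=1$, and $\mathrm{supp}(f)\subset V$.

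Set $\sigma^{\pr}:=f\cdot \sigma$. Proposition \ref{P:module} gives $\sigma^{\pr}\in L$; the identity $\sigma^{\pr}(t_0)=f(t_0)\sigma(t_0)=\sigma(t_0)$ is immediate; and for every $t\in T$ one has either $t\in V$, in which case $\|\sigma^{\pr}(t)\|=f(t)\|\sigma(t)\|\leq \|\sigma(t)\|<r$, or $t\notin V\supset \mathrm{supp}(f)$, in which case $\sigma^{\pr}(t)=0_t$. Either way $\|\sigma^{\pr}(t)\|\leq r$, so $\|\sigma^{\pr}\|\leq r<\theta$, as required.

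There is no real obstacle: the whole argument rests on the two inputs above, the upper semi-continuity of the norm (providing a neighbourhood of $t_0$ where $\sigma$ is already uniformly below $\theta$) and the module property (keeping $f\sigma$ inside $L$). The only detail needing a little care is the strict inequality $\|\sigma^{\pr}\|<\theta$; interposing the auxiliary constant $r$ strictly between $\|\sigma(t_0)\|$ and $\theta$ handles it cleanly.
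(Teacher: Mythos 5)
Your proof is correct and follows essentially the same route as the paper: both arguments take the open set $V=\{t\mid \|\sigma(t)\|<r\}$ for an intermediate constant $r$ strictly between $\|\sigma(t_0)\|$ and $\theta$ (the paper takes $r=\tfrac12(\theta+\|\sigma(t_0)\|)$), multiply $\sigma$ by a Urysohn function supported in $V$ with value $1$ at $t_0$, and invoke the $C_b(T)$-module property from Proposition \ref{P:module} to keep the product in $L$. The only cosmetic difference is that the paper phrases the Urysohn step via the compactness of $T\setminus V$ rather than via a compactly supported function.
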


   \begin{proof}

      Denote $\theta^{\pr} := 1/2(\theta + \|\sigma(t_0)\|)$. Then $V := \{t\in T\mid \|\sigma(t)\| < \theta^{\pr}\}$ is an open neighbourhood of $t_0$ and
      $T\setminus V$ is compact. Let $f : T\to [0,1]$ be a continuous function such that $f(t_0) = 1$ and $f\mid (T\setminus V)\equiv 0$. Define
      $\sigma^{\pr} := f\cdot \sigma$. It is easily  checked that $\sigma^{\pr}$ has the required properties.

   \end{proof}

   \begin{thm} \label{T:subb}

      A closed subspace of $\Gamma_0(\xi)$ is a $C_b(T)$-submodule if and only if there exists a subbundle $\eta$ of $\xi$ such that $L =
      \Gamma_0(\eta)$.

   \end{thm}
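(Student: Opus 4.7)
The ``if'' direction is straightforward: $\Gamma_0(\eta)$ is closed in $\Gamma_0(\xi)$ because the fibres of $\eta$ are closed subspaces of the fibres of $\xi$, and it is a $C_b(T)$-submodule because each $\eta(t)$ is a linear subspace of $\E(t)$. For the ``only if'' direction, let $L$ be a closed $C_b(T)$-submodule of $\Gamma_0(\xi)$ and set
\[
 L(t) := \{\sigma(t)\mid \sigma\in L\},\ t\in T,
\]
a linear subspace of $\E(t)$. My plan is to take $\E^{\pr} := \bigcup_{t\in T} L(t)$ and show that $\eta := (\E^{\pr},p\mid_{\E^{\pr}},T)$ is a Banach subbundle of $\xi$ with $\Gamma_0(\eta) = L$. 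This requires three ingredients: closedness of each fibre $L(t)$, openness of $p\mid_{\E^{\pr}}$, and a density argument identifying $\Gamma_0(\eta)$ with $L$.

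The closedness of $L(t_0)$ is the nontrivial fibre-wise property. The key observation is that the proof of Lemma \ref{L:approx} uses only the $C_b(T)$-module structure of $L$ (the cutoff $f\cdot \sigma$ lies in $L$ because $f\in C_b(T)$), hence its conclusion applies here. Given $\sigma_n\in L$ with $\sigma_n(t_0)\to y$, I would pass to a subsequence with $\|\sigma_{n+1}(t_0)-\sigma_n(t_0)\|<2^{-n-1}$ and apply the lemma to the differences $\sigma_{n+1}-\sigma_n$ to obtain $\tau_n\in L$ with $\tau_n(t_0)=\sigma_{n+1}(t_0)-\sigma_n(t_0)$ and $\|\tau_n\|<2^{-n}$; the series $\sigma_1 + \sum_n \tau_n$ then converges in the closed space $L$ to a section whose value at $t_0$ is $y$, so $y\in L(t_0)$. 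Openness of $p\mid_{\E^{\pr}}$ is then immediate from the availability of sections in $L$ through every point: for any open $U\subset \E$ and $x\in U\cap L(t)$, pick $\sigma\in L$ with $\sigma(t)=x$; the preimage $\sigma^{-1}(U)$ is an open neighborhood of $t$ inside $p(\E^{\pr}\cap U)$.

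For the identification $\Gamma_0(\eta) = L$, the inclusion $L\subset \Gamma_0(\eta)$ is immediate. For the reverse, let $\rho\in \Gamma_0(\eta)$ and $\varepsilon>0$; the set $K := \{t\mid \|\rho(t)\|\geq\varepsilon\}$ is compact. For each $t\in K$, pick $\sigma_t\in L$ with $\sigma_t(t)=\rho(t)$ and, using upper semi-continuity of the norm applied to the continuous section $s\mapsto \sigma_t(s)-\rho(s)$, find an open neighborhood $V_t$ of $t$ on which $\|\sigma_t(s)-\rho(s)\|<\varepsilon$. Extract a finite subcover $V_{t_1},\dots,V_{t_n}$ of $K$, let $V_0:=\{t\mid \|\rho(t)\|<\varepsilon\}$, and take a partition of unity $\{\varphi_0,\varphi_1,\dots,\varphi_n\}\subset C_b(T)$ subordinate to $\{V_0,V_{t_1},\dots,V_{t_n}\}$, with $\varphi_i\in C_c(T)$ for $i\geq 1$ (available by local compactness of $T$). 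Then $\sigma := \sum_{i=1}^n \varphi_i\sigma_{t_i}$ lies in $L$ (finite sum of module actions), and a pointwise estimate
\[
 \|\sigma(t)-\rho(t)\|\leq \sum_{i=1}^n \varphi_i(t)\|\sigma_{t_i}(t)-\rho(t)\| + \varphi_0(t)\|\rho(t)\| < \varepsilon,
\]
splitting into the cases $\varphi_i(t)>0$ (where $t\in V_{t_i}$) and $\varphi_0(t)>0$ (where $t\notin K$), yields $\|\sigma-\rho\|\leq \varepsilon$. Closedness of $L$ gives $\rho\in L$.

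The main obstacle is step one, the closedness of $L(t)$: there is no obvious reason why the image of $L$ under evaluation at a point should be a closed subspace of $\E(t)$. Recognizing that Lemma \ref{L:approx}, though stated for M-ideals, actually requires only the $C_b(T)$-module hypothesis, is the crux that unlocks this step and allows the rest of the construction to proceed routinely.
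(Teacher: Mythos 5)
Your proposal is correct and follows essentially the same route as the paper: define the fibres $\F(t):=\{\sigma(t)\mid\sigma\in L\}$, prove their closedness via a telescoping series built from Lemma \ref{L:approx} (which, as you rightly note, only uses the $C_b(T)$-module structure, and the paper indeed applies it to the mere submodule $L$), deduce openness of the projection from the existence of sections of $L$ through each point, and identify $L=\Gamma_0(\eta)$ by a partition-of-unity approximation over the compact set where $\|\rho\|\geq\varepsilon$. Your final estimate, which absorbs the complement of $K$ into a single term $\varphi_0$ and yields $\|\sigma-\rho\|\leq\varepsilon$ in one step, is a mild streamlining of the paper's two-stage argument (approximate on $K$, then cut off, arriving at $3\varepsilon$), but it is the same idea.
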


   \begin{proof}

      Obviously $\Gamma_0(\eta)$ for $\eta$ a subbundle of $\xi$ is a $C_b(T)$-submodule of $\Gamma_0(\xi)$. Suppose now that the closed
      subspace $L$ of $\Gamma_0(\xi)$ is a $C_b(T)$-submodule. Denote $\F(t) := \{\sigma(t)\mid \sigma\in L\}$, $t\in T$. Clearly $\F(t)$ is a
      linear subspace of $\E(t)$ and we are going to show that it is closed in $\E(t)$. Let $\{x_n\}$ be a sequence in $\F(t)$ that converges to
      $x\in \E(t)$. By passing to a subsequence if necessary we may suppose that $\|x_{n+1} - x_n\| < 2^{-n}$ for each $n\in \mathbb{N}$. There
      exist $\sigma_n\in L$ such that $\sigma_n(t) = x_n$, $n\in \mathbb{N}$. Put $\varrho_1 : = \sigma_1$; Lemma \ref{L:approx} yields
      $\varrho_n\in L$ with $\varrho_n(t) = \sigma_{n+1}(t) - \sigma_n(t)$ and $\|\varrho_n\| < 2^{-n}$, $n\geq 2$. The section $\varrho ;=
      \sum_{n=1}^{\infty} \varrho_n$ is in $L$ and $\varrho(t) = \lim \sigma_n(t) = x$ and we deduce that $x\in \F(t)$.

      Set $\F := \cup_{t\in T} \F(t)$; we are going to show that $p\mid \F$ is open. Let $U$ be an open subset of $\E$ and $t_0\in p(U\cap \F)$.
      There exists $\sigma\in L$ such that $\sigma(t_0)\in U\cap \F$. there exists a neighbourhood $V$ of $t_0$ in $T$ such that $\sigma(t)\in U$
      whenever $t\in V$. Then $V\subset p(U\cap \F)$ and we obtained that $p(U\cap \F)$ is an open subset of $T$. We have shown that $\eta :=
      (\F,p\mid\F,T)$ is a subbundle of $\xi$.

      We want to show now that $L = \Gamma_0(\eta)$. Clearly $L\subset \Gamma_0(\eta)$. Let $\sigma\in \Gamma_0(\eta)$ and $\varepsilon > 0$.
      Then $K := \{t\in T\mid \|\sigma(t)\|\geq \varepsilon\}$ is a compact subset of $T$ and, $L$ being a $C_b(T)$ module, for each $t\in K$
      there exists $\varrho_t\in L$ such that $\varrho_t(t) = \sigma(t)$ and $\varrho_t\mid (T\setminus K) \equiv 0$. The set $V_t :=
      \{t^{\pr}\in T\mid \|\sigma(t^{\pr}) - \varrho_t(t^{\pr})\| < \varepsilon\}$, $t\in K$, is open; for each $t\in K$ we choose on open set
      $W_t$ such that $\overline{W_t}$ is compact and $t\in W_t\subset \overline{W_t}\subset V_t$. There is a finite subcover $\{W_{t_i}\}_{i=1}^n$
      of the cover $\{W_t\}_{t\in K}$ of $K$; let $f_i : T\to [0,1]$ be a continuous function such that $f_i\mid \overline{W_{t_i}} \equiv 1$
      and $f_i\mid (T\setminus V_{t_i}) \equiv 0$, $1\leq i\leq n$. On $\cup_{i=1}^n \overline{W_{t_i}}$ set
      \[
       g_i := \frac{f_i}{\sum_{j=1}^n f_j}
      \]
      and extend each $g_i$ to a continuous and bounded function on $T$. For simplicity, the extension will be denoted also by $g_i$. Define
      now $\rho := \sum_{i=1}^n g_i\varrho_{t_i}$. Then $\rho\in L$ and for $t\in K$ we have
      \[
       \|\rho(t) - \sigma(t)\|\leq \sum_{i=1}^n g_i\|\varrho_{t_i} - \sigma(t)\| < \varepsilon.
      \]
      Define the open neighbourhood $V := \{\|\rho(t) - \sigma(t)\| < \varepsilon$ of $K$ and let $h : T\to [0,1]$ be a continuous function such
      that $h\mid K\equiv 1$ and $h\mid (T\setminus V)\equiv 0$. Then $h\cdot \rho\in L$ and $\|h(t)\rho(t) - \sigma(t)\| < \varepsilon$ on $K$.
      If $t\in T\setminus V$ then $\|h(t)\rho(t) - \sigma(t)\| = \|\sigma(t)\| < \varepsilon$ since $K\subset V$. Now, if $t\in V\setminus K$ we
      have
      \begin{multline}
       \|h(t)\rho(t) - \sigma(t)\|\leq |h(t) - 1|\|\rho(t)\| + \|\rho(t) - \sigma(t)\|\leq\\
                                           (1 - h(t))(\|\rho(t) - \sigma(t)\| + \|\sigma(t)\|) + \|\rho(t) - \sigma(t)\| < 3\varepsilon.
      \end{multline}
      We have found $h\cdot \rho\in L$ such that $\|h\cdot \rho - \sigma||\leq 3\varepsilon$ hence $\sigma\in L$ and we conclude that $L =
      \Gamma_0(\eta)$.

      \end{proof}

      The case of a compact base space of the following result is \cite[Theorem 15.21]{G} for which only a brief indication of the proof was
      given.

      \begin{thm} \label{T:char}

         A closed subspace $L$ of $\Gamma_0(\xi)$ is an M-ideal if and only if there exists a
         subbundle $\eta := (\F,p\mid \F, t)$ of $\xi$ such that $L = \Gamma_0(\eta)$ and and each fiber $\F(t)$ is an M-deal of the Banach space
         $\E(t)$.

      \end{thm}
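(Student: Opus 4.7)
The plan is to characterize the M-ideal property of $L$ in $\Gamma_0(\xi)$ via the fibrewise 3-ball property of $\F(t)$ in $\E(t)$, transferring information between the two settings by Corollary \ref{C:ext} (to lift points to sections vanishing at infinity), Theorem \ref{T:selection} (continuous selection), and truncation by compactly supported bumps. For the necessity direction, Proposition \ref{P:module} together with Theorem \ref{T:subb} immediately gives a subbundle $\eta := (\F, p|_{\F}, T)$ with $L = \Gamma_0(\eta)$; what remains is to show each fibre $\F(t)$ is an M-ideal of $\E(t)$. Given the 3-ball data $\bigcap_{i=1}^{3} B(x_i, r_i) \ni w$ and $z_i \in B(x_i, r_i) \cap \F(t)$, lift to $\sigma_i, \rho \in \Gamma_0(\xi)$ and $\tau_i \in L$ with the prescribed values at $t$, use upper semi-continuity of the norm to find a neighbourhood $W$ of $t$ on which the strict inequalities $\|\rho(s) - \sigma_i(s)\| < r_i$ and $\|\tau_i(s) - \sigma_i(s)\| < r_i$ persist, and multiply everything by a continuous $h : T \to [0,1]$ equal to $1$ at $t$ and vanishing off a compact subset of $W$ (Proposition \ref{P:module} keeps the truncated $\tau_i$ in $L$). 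The truncated sections realise the 3-ball hypothesis in $\Gamma_0(\xi)$ for the M-ideal $L$, and evaluating a witness $\mu \in L$ at $t$ supplies a point of $\F(t) \cap \bigcap_i B(x_i, r_i)$.

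For the sufficiency direction, assume each $\F(t)$ is an M-ideal of $\E(t)$ and take $\sigma_i, \tau \in \Gamma_0(\xi)$ and $\rho_i \in L$ realising a 3-ball configuration. Choose $r_i'$ strictly between $\max(\|\tau - \sigma_i\|_\infty, \|\rho_i - \sigma_i\|_\infty)$ and $r_i$, and define
\[
 \Phi(t) := \F(t) \cap \bigcap_{i=1}^{3} \overline{B}(\sigma_i(t), r_i'),
\]
a non-empty closed convex subset of the fibre by the fibrewise 3-ball property. The key step is to verify that $\Phi$ is lower semi-continuous: given an open $U \subset \E$ meeting $\Phi(t_0)$, first apply the fibrewise 3-ball property with slightly smaller radii to produce a strictly interior point of $\Phi(t_0)$, take a convex combination with any point of $U \cap \Phi(t_0)$ to reach some $x_1 \in U \cap \F(t_0)$ with $\|x_1 - \sigma_i(t_0)\| < r_i'$ strictly for each $i$, lift $x_1$ to a section in $L$ (using the fullness of $\eta$ guaranteed by Corollary \ref{C:ext}), and propagate the strict inequalities to a neighbourhood of $t_0$ by upper semi-continuity of the bundle norm.

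Because each $\sigma_i$ vanishes at infinity, the finite union $K := \bigcup_i \{t : \|\sigma_i(t)\| \geq r_i'\}$ is compact and $0_t \in \Phi(t)$ for $t \notin K$. Pick an open $V \supset K$ with $\overline V$ compact, apply Theorem \ref{T:selection} to the restriction of $\xi$ to $\overline V$ to obtain a continuous selection $\mu_0$ of $\Phi|_{\overline V}$, and let $\mu := h\mu_0$ on $\overline V$, extended by $0_t$ elsewhere, where $h : T \to [0,1]$ is continuous with $h \equiv 1$ on $K$ and $h \equiv 0$ off $V$. A routine case analysis over $K$, $V \setminus K$ and $T \setminus \overline V$ yields $\|\mu - \sigma_i\|_\infty \leq r_i' < r_i$, and $\mu \in \Gamma_0(\eta) = L$. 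The main technical obstacle is the lower semi-continuity verification: a witness in $U \cap \Phi(t_0)$ may lie on the boundary of one of the closed balls, so the convex perturbation to a strictly interior point is essential before lifting and invoking upper semi-continuity of the norm.
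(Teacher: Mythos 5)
Your proposal is correct and follows essentially the same route as the paper: necessity via Proposition \ref{P:module} and Theorem \ref{T:subb} plus truncation of lifted sections by a bump function, and sufficiency via the fibrewise 3-ball property, a lower semi-continuous fibrewise intersection map, a selection over a compact neighbourhood of the set where the $\sigma_i$ are not small, and a final cut-off. The only divergence is that you define $\Phi(t)$ with closed balls and therefore need the convex perturbation to an interior point to prove lower semi-continuity, whereas the paper works with open balls $\{y\in\F(t)\mid \|\sigma_i(t)-y\|<r_i'\}$ and only afterwards passes to closures, which sidesteps that boundary issue; both are sound.
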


      \begin{proof}

         Suppose first that $L$ is an M-ideal of $\Gamma(\xi)$. It follows from Proposition \ref{P:module} and Theorem \ref{T:subb} that there
         exists a subbundle $\eta := (\F,p\mid \F,T)$ of $\xi$ such that $L = \Gamma_0(\eta)$. It remains to show that $\F(t) = \{\sigma(t)\mid
         \sigma\in L\}$ is an M-ideal of $\E(t)$. For this purpose we proceed to show that $\F(t_0)$ has the 3-ball-property in $\E(t_0)$, $t_0\in
         T$. Let $\{x_i\}_{i=1}^3\subset \E(t_0)$ and let $\{r_i\}_{i=1}^3$ be positive numbers such that there exists $x\in \E(t_0)$ with
         $\|x_i - x\| < r_i$, $1\leq i\leq 3$, and there exist $\{y_i\}_{i=1}^3$ in $\F(t_0)$ such that $\|x_i - y_i\| < r_i$, $1\leq i\leq 3$.
         There are sections $\sigma,\sigma_i\in \Gamma_0(\xi)$, such that $\sigma(t_0) = x$ and $\sigma_i(t_0) = x_i$, $1\leq i\leq 3$, and
         sections $\varrho_i\in \Gamma_0(\eta)$ such that $\varrho_i(t_0) = y_i$, $1\leq i\leq 3$. The set
         \[
          V ;= \{ t\in T\mid \|\sigma_i(t) - \sigma(t)\| < r_i,\ \|\sigma_i(t) - \varrho_i(t)\| < r_i,\ 1\leq i\leq 3\}
         \]
         is an open neighbourhood of $t_0$. Let $f : T\to [0,1]$ be a continuous function such that $f(t_0) = 1$ and $f\mid (T\setminus V)
         \equiv 0$. Then $f\cdot \sigma, f\cdot \sigma_i\in \Gamma_0(\xi)$ and $f\cdot \varrho_i\in \Gamma_0(\eta)$, $1\leq i\leq 3$. We have
         $\|f\cdot \sigma_i - f\cdot \sigma\| < r_i$ and $\|f\cdot \sigma_i - f\cdot \varrho_i\| < r_i$, $1\leq i\leq 3$. Since $\Gamma_0(\eta)$
         is an M-ideal there is a section $\varrho\in \Gamma_0(\eta)$ such that $\|f\cdot \sigma_i - \varrho\| < r_i$, $1\leq i\leq 3$. Then
         $\varrho(t_0)\in \F(t_0)$ and $\|x_i - \varrho(t_0)\| = \|f(t_0)\sigma_i(t_0) - \varrho_i(t_0)\| < r_i$, $1\leq i\leq 3$. Thus
         $\F(t_0)$ has indeed the 3-ball-property in $\E(t_0)$.

         Now we suppose that $\eta := (\F,p\mid \F,t)$ is a subbundle of $\xi$ such that each $\F(t)$ is an M-ideal of $\E(t)$. Since each
         subspace $\F(t)$ is closed in $\E(t)$ it is obvious that $\Gamma_0(\eta)$ is a closed subspace of $\Gamma_0(\xi)$. We are going to show
         that $\Gamma_0(\eta)$ has the 3-ball-property in $\Gamma_0(\xi)$ and is thus an M-ideal of $\Gamma_0\xi)$. Let $\{r_i\}_{i=1}^3$ be
         positive numbers, $\sigma_i,\sigma\in \Gamma_0(\xi)$, and $\varrho_i\in \Gamma_0(\eta)$, $1\leq i\leq 3$, so that $\|\sigma_i -
         \sigma\|< r_i$ and $\|\sigma_i - \varrho_i\| < r_i$, $1\leq i\leq 3$. Let $r_i^{\pr}$ be a number satisfying
         \[
          \max\{\|\sigma_i - \sigma\|,\ \|\sigma_i - \varrho_i\|\} < r_i^{\pr} < r_i,
         \]
         $1\leq i\leq 3$ and set
         \[
          C(t) := \cap_{i=1}^3 \{y\in \F(t)\mid \|\sigma_i(t) - y\| < r_i^{\pr}\}.
         \]
         The subspace $\F(t)$ of $\E(t)$ has the 3-ball-property; hence $C(t)$ is a non-void convex subset of $\F(t)$, $t\in T$. We are going to
         show that the map $t\to C(t)$ is lower semi-continuous. Let $U$ be an open subset of $\E$ and suppose $C(t_0)\cap U\neq \mset$ for some
         $ t_0\in T$. Then there exists an element $y_0\in C(t_0)\cap U$ and $\varrho_0\in \Gamma_0(\eta)$ such that $\varrho_0(t_0) = y_0$. The
         open neighbourhood
         \[
          V := (\cap_{i=1}^3 \{t\in T\mid \|\sigma_i(t_0) - \varrho_0(t_0)\| < r_i^{\pr}\})\cap \{t\in T\mid \varrho(t)\in U\}
         \]
         of $t_0$ satisfies $V\subset \{t\in T\mid C(t)\cap U\neq \mset\}$ and the claim about $t\to C(t)$ is proved. We infer that $t\to
         \overline{C(t)}$, $t\in T$, is also lower semi-continuous. Remark that $\overline{C(t)}$ is a non-void closed convex subset of $\F(t)$.
         Moreover, if $y\in overline{C(t)}$ then $\|y\|\leq r_i^{\pr}$
         The set
         \[
          K_1 := \cup_{i=1}^3 \{t\in T\mid \|\sigma_i(t)\|\geq r_i^{\pr}/2\}
         \]
         is compact and there exists an open subset $U\supset K_1$ of $T$ such that $K_2 :+ \overline{U}$ is compact. Let $\rho^{\pr}$ be a
         continuous selection of $t\to \overline{C(t)}$ over $K_2$ given by Theorem \ref{T:selection} and $f : T\to [0,1]$ be a continuous function
         satisfying $f\mid K_1\equiv 1$ and $f\mid (T\setminus U)\equiv 0$. Define
         \[
          \rho(t) :=
          \begin{cases}
             f(t)\cdot \rho^{\pr}(t), \ &\text{if $t\in K_2$},\\
             0_t, \ &\text{if $t\in T\setminus U$}.
          \end{cases}
         \]
         Then $\rho$ is well defined and $\rho\i \Gamma_0(\eta)$. We claim that $\|\sigma_i - \rho\|\leq r_i^{\pr}$, $1\leq i\leq 3$ and we
         shall conclude from this that $\Gamma_0(\eta)$ has the 3-ball-property in $\Gamma_o(\xi)$. If $t\in K_1$ then $\|\sigma_i(t) -
         \rho(t)\| = \|\sigma_i(t) - \rho^{\pr}(t)\|\leq r_i^{\pr}$ since $\rho^{\pr}(t)\in \overline{C(t)}$. If $t\in T\setminus U$ then
         $\|\sigma_i(t) - \rho(t)\| = \|\sigma_i(t)\| < r_i^{\pr}/2$. Finally, if $t\in U\setminus K_1$ then
         \begin{multline}
            \|\sigma_i(t) - \rho(t)\|\leq \|\sigma_i(t) - f(t)\cdot \sigma_i(t)\| + \|f(t)\cdot \sigma_i(t) - f(t)\cdot \rho^{\pr}(t)\|\\
                   (1 - f(t))\|\sigma_i(t)\| + f(t)\|\sigma_i(t) - \rho^{\pr}(t)\|\leq (1 - f(t))r_i^{\pr}/2 + f(t)r_i^{\pr}\leq r_i^{\pr},
         \end{multline}
         and with this the claim is proved.

      \end{proof}

\section{Uniform and locally uniform Banach bundles} \label{S:uniform}

In this section we shall describe a class of Banach bundles that is more general than the class of locally trivial Banach bundles but it is
still quite manageable. This class was introduced in \cite{L} under the guise of continuous fields of Banach spaces. We remind the definition of
the Hausdorff distance between two bounded subsets $A$, $B$ of a metric space $(M,\rho)$:
\[
 d(A,B) := \text{max}\{\sup_{a\in A} \rho(a,B), \sup_{b\in B} \rho(b,A)\}.
\]
It is a metric in the family of all bounded closed subsets of $M$.

Recall that if $X$ is a Banach space then $X_1$ denotes its closed unit ball.

\begin{lem} \label{L:sc}

   Let $X$ be a Banach space, $\mathcal{M}$ the family of its bounded closed subsets endowed with the Hausdorff metric, $T$ a topological space,
   and $t\to X(t)$ a map $\Phi$ from $T$ to the collection of closed subspaces of $X$. If the map $t\to X(t)_1$ from $T$ to $\mathcal{M}$ is continuous
   then $\Phi$ is lower semi-continuous.

\end{lem}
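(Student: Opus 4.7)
The plan is to unfold the definition of lower semi-continuity directly and exploit the fact that each $X(t)$ is a linear subspace, so any point of $X(t)$ can be rescaled into the unit ball $X(t)_1$ while keeping track of how far the rescaling moves things.

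Fix an open subset $O$ of $X$ and suppose $t_0 \in T$ satisfies $O \cap X(t_0) \neq \mset$; I must exhibit a neighbourhood $V$ of $t_0$ with $O \cap X(t) \neq \mset$ for every $t \in V$. Pick $x \in O \cap X(t_0)$. If $x = 0$, then $0 \in X(t)$ for every $t$ (each $X(t)$ being a subspace), so $T$ itself serves as $V$. So assume $x \neq 0$, put $c := \|x\|$, and choose $r > 0$ with $B(x,r) \subset O$.

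Now $c^{-1}x \in X(t_0)_1$. By the assumed continuity of $t \mapsto X(t)_1$ into $\mathcal{M}$, there is a neighbourhood $V$ of $t_0$ such that the Hausdorff distance satisfies $d(X(t)_1, X(t_0)_1) < r/c$ for every $t \in V$. In particular, for each such $t$ there exists $z \in X(t)_1$ with $\|z - c^{-1}x\| < r/c$. Since $X(t)$ is a linear subspace, $cz \in X(t)$, and $\|cz - x\| < r$, so $cz \in B(x,r) \subset O$. Thus $cz \in O \cap X(t)$, proving lower semi-continuity.

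There is no real obstacle; the only subtlety is that the hypothesis only gives information about the unit balls, so one must invoke linearity of the fibers to transfer information about an arbitrary point of $X(t_0) \cap O$ to the normalized point in $X(t_0)_1$ and back. The scaling factor $c$ translates a Hausdorff-distance bound on the unit balls into a norm-distance bound on the original point, which is exactly what is needed to stay inside $O$.
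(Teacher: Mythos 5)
Your proof is correct and follows essentially the same route as the paper: pick a ball $B(x_0,r)\subset O$ around a point of $O\cap X(t_0)$ and use Hausdorff continuity of the unit balls to produce a nearby point of $X(t)$ for $t$ near $t_0$. The paper compresses the rescaling step (it just asserts the existence of $y_t\in X(t)$ with $\|y_t\|\leq\|x_0\|$ and $\|x_0-y_t\|<r$), whereas you spell out the normalization by $c=\|x\|$ and the trivial $x=0$ case explicitly; the content is identical.
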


\begin{proof}

   Let $O$ be an open subset of $X$ and suppose that $O\cap X(t_o)\neq \mset$, $t_0\in T$. Pick $x_0\in O\cap X(t_0)$ and let $r > 0$ be so that
   $ B(x_0,r)\subset O$. From the continuity of $t\to X(t)_1$ it follows that there exists a neighbouhood $V$ of $t_0$ in $T$ such that for each
   $t\in V$ there exists $y_t\in X(t)$ satisfying $\|y_t\|\leq \|x_0\|$ and $\|x_0 - y_t\| < r$. Thus, if $t\in V$ then $O\cap X(t)\neq \mset$ and
   the proof is complete.

\end{proof}

We keep the setting and the notations of Lemma \ref{L:sc}; to be consistent with our blanket requirement we shall suppose that $T$ is a
Hausdorff topological space. We make $\{t\}\times X(t)$ into a Banach space by transfering onto it in the obvious way the structure of $X(t)$,
$t\in T$. Put $\E ;= \cup_{t\in T} (\{t\}\times X(t))$ and define $p : \E\to T$ by $p((t,x)) := t$. For an open subset $V$ of $T$ and an open
subset $O$ of $X$ let $\U(V,O) := \{(t,x)\mid t\in V,\ x\in O\cap X(t)\}$.

\begin{prop} \label{P:unif}

   The family of all the sets $\U(V,O)$ when $V$ runs through all the open subsets of $T$ and $O$ runs through all the open subsets of $X$ is a
   base of a topology on $\E$. When $\E$ is endowed with this topology $\xi := (\E,p,T)$ is a continuous Banach bundle.

\end{prop}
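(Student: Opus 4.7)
The plan is to realize $\E$ as a subset of the product space $T\times X$ via the obvious identification $(t,x)\mapsto (t,x)$ and observe that the proposed base is exactly the one inducing the subspace topology; the Banach bundle axioms then transfer cleanly from $T\times X$, with Lemma \ref{L:sc} providing the one non-formal ingredient.

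First I would check that $\{\U(V,O)\}$ is a base using the identity $\U(V_1,O_1)\cap \U(V_2,O_2) = \U(V_1\cap V_2,\, O_1\cap O_2)$ together with $\U(T,X) = \E$. Writing $\U(V,O) = (V\times O)\cap \E$ makes it transparent that the generated topology is the subspace topology inherited from $T\times X$; in particular $\E$ is automatically Hausdorff.

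For the Banach bundle axioms, continuity of $p$ is immediate from $p^{-1}(V) = \U(V,X)$. Openness of $p$ follows by computing $p(\U(V,O)) = V\cap \{t\in T\mid O\cap X(t)\neq \mset\}$, which is open because Lemma \ref{L:sc} asserts that $t\mapsto X(t)$ is lower semi-continuous. Fiberwise addition, scalar multiplication and the map $(t,x)\mapsto \|x\|$ are restrictions to $\E$ (and to the appropriate fiber product) of the corresponding jointly continuous operations on $T\times X$, so they are continuous on $\E$; in particular the norm is continuous, yielding the continuous Banach bundle conclusion. The neighborhood-at-zero axiom is immediate because $\{x\in \E\mid p(x)\in V,\ \|x\| < \varepsilon\} = \U(V, B(0,\varepsilon))$ is a base element, and every open $O\ni 0$ in $X$ contains some ball $B(0,\varepsilon)$.

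The main---and essentially the only non-routine---step is the openness of $p$, where Lemma \ref{L:sc} is the decisive input. Everything else is a direct transfer of structure from the product $T\times X$ to the subspace $\E$.
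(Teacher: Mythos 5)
Your proof is correct and follows essentially the same route as the paper's: the base identity $\U(V_1,O_1)\cap \U(V_2,O_2)\supset \U(V_1\cap V_2,O_1\cap O_2)$, the formula $p^{-1}(V)=\U(V,X)$, and the openness of $p$ via $p(\U(V,O)) = V\cap\{t\mid O\cap X(t)\neq\mset\}$ and Lemma \ref{L:sc} are all exactly the paper's steps. The only difference is in the routine part: you get continuity of the norm (and of the fiberwise operations) by identifying the topology as the subspace topology inherited from $T\times X$, whereas the paper verifies norm continuity directly by showing $\U(V,B(x_0,r))$ lies in a norm band around $\|x_0\|$ --- an observation the paper itself records only later, in the paragraph preceding Example \ref{E:closed}.
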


\begin{proof}

If $(t,x)\in \U(V_1,O_1)\cap \U(V_2,O_2)$ then
\[
 (t,x)\in \U(V_1\cap V_2,O_1\cap O_2)\subset \U(V_1,O_1)\cap \U(V_2,O_2);
\]
hence the first statement of the proposition. If $V\subset T$ is open then $p^{-1}(V) = \U(V,X)$ so $p$ is continuous. If $V\subset T$ and
$O\subset X$ are open then $p(\U(V,O)) = V\cap \{t\in T\mid O\cap X(t)\neq \mset\}$ so Lemma \ref{L:sc} implies that this set is open in $\E$.
Thus $\xi$ is a Banach bundle. Let $(t_0,x_0)\in \E$, $r > 0$ and set $V := \{t\in T\mid B(x_0,r)\cap X(t)\neq \mset\}$. The continuity of the
norm at $(t_0,x_0)\in \E$ follows from
\begin{multline*}
 (t_0,x_0)\in \U(V,B(x_0,r)) = \{(t,x)\mid t\in V,\ x\in B(x_0,r)\cap X(t)\}\subset \\
                   \{(t,x)\in \E\mid \|(t_0,x_0)\| - r < \|(t,x)\| < \|(t_0,x_0)\| + r\}.
\end{multline*}

\end{proof}

A continuous Banach bundle isomorphic to a Banach bundle as described in Proposition \ref{P:unif} is called a uniform Banach bundle. A
(continuous) Banach bundle $\xi := (\E,p,T)$ is called locally uniform if there exists a family of closed subsets $\{T_{\alpha}\}_{\alpha\in
\A}$ of $T$ such that $\{\mathrm{Int}(T_{\alpha})\}_{\alpha\in \A}$ is an open cover of $T$ and each restriction $\xi\mid T_{\alpha}$ is a
uniform Banach bundle. One should recall that if $T$ is paracompact or locally compact Hausdorff then each subset $T_{\alpha}$ inherits the same
property. If two subspaces of a Banach space have different dimensions then, by \cite{Gu}, the distance between their closed unit balls is at
least $1/2$. Thus if $\xi := (\E,p,T)$ is a locally uniform Banach bundle and for some $t\in T$ the fiber $\E(t)$ is $n$-dimensional, $n <
\infty$, then there exists a neighbourhood $V$ of $t$ in $T$ such that for each $s\in V$ the fiber $\E(s)$ is $n$-dimensional. It follows that
if all the fibers of $\xi$ are finite dimensional and $T$ is locally compact Hausdorff then $\xi$ is locally trivial by \cite[Theorem 18.5]{G}.
On the other hand there exist uniform Banach bundles with infinite dimensional fibers that are not locally trivial. Indeed, Kadets \cite{K}
constructed a Banach space $X$ having subspaces $\{X_n\}_{n=1}^{\infty}$ and $Y$ such that each subspace $X_n$ is isometric to the space
$\ell^{p_n}$ for a sequence $\{p_n\}$ that increases to $2$ and $Y$ is isometric to $\ell^2\}$. Moreover, the the sequence of the closed unit
balls of the subspaces $X_n$ converges to the closed unit ball of $Y$\footnote{The author is grateful to Professor E. Gluskin for providing him
this reference.}. Thus we have here an example of a uniform Banach bundle whose base space is $\mathbb{N}\cup \{\infty\}$ with the natural
topology that is not isomorphic to a locally trivial bundle since, as it is well known, the spaces $\ell^{p^{\pr}}$ and $\ell^{p^{\pr \pr}}$ are
not isomorphic if $p^{\pr}\neq p^{\pr \pr}$.

We describe now a way to construct locally uniform Banach bundles. Let $T$ be a regular topological space, $\{V_{\alpha}\}_{\alpha\in \A}$ a
cover of $T$ with open non-void subsets, $\{X_{\alpha}\}_{\alpha}$ a family of Banach spaces such that whenever $V_{\alpha}\cap V_{\beta}\neq
\mset$, $X_{\alpha}\cap X_{\beta}$ is a non-trivial closed subspace of $X_{\alpha}$ and of $X_{\beta}$ on which the norms $\|\cdot\|_{\alpha}$
and $\|\cdot\|_{\beta}$ coincide. Denote by $\mathcal{M}_{\alpha}$ the hyperspace of all bounded closed subsets of $X_{\alpha}$ endowed with the
Hausdorff metric. Suppose that for each $t\in V_{\alpha}$, $\alpha\in \A$, $X(t)$ is a closed subspace of $X_{\alpha}$ and $t\to X(t)_1$ is
continuous as a map from $V_{\alpha}$ to $\mathcal{M}_{\alpha}$. Of course, if $t\in V_{\alpha}\cap V_{\beta}$ then $X(t)\subset X_{\alpha}\cap
X_{\beta}$. Put
\[
 \E_{\alpha} := \{(t,x)\mid t\in V_{\alpha}, x\in X(t)\}
\]
and give $\E_{\alpha}$ the topology specified in \ref{P:unif}. That is, each pair of open subsets $V\subset V_{\alpha}$ and $O\subset
X_{\alpha}$ determines an open subset
\[
 \U(V,O) := \{(t,x)\in \E_{\alpha}\mid t\in V,\ x\in O\cap X(t)\}.
\]
Observe that if $V_{\alpha}\cap V_{\beta}\neq \mset$ then on $\E_{\alpha}\cap \E_{\beta}$ the two relative topologies coincide. Set now $\E :=
\cup_{\alpha\in \A} \E_{\alpha}$ and define a topology in $\E$ as follows: $\U\subset \E$ is open if $\U\cap \E_{\alpha}$ is open in
$\E_{\alpha}$ for every $\alpha\in \A$. Let $p : \E\to T$ be the natural map.

\begin{prop} \label{P:lunif}

   With the above notations $\xi := (\E,p,T)$ is a locally uniform Banach bundle.

\end{prop}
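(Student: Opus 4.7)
The plan is to verify that $\E$ equipped with the coherent topology really is a continuous Banach bundle over $T$, and then to exhibit a suitable closed cover witnessing local uniformity.

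First I would nail down the two prerequisite facts flagged but not fully argued in the setup. The assertion that on the overlap $\E_\alpha \cap \E_\beta$ the two relative topologies coincide follows from the hypothesis that $\|\cdot\|_\alpha$ and $\|\cdot\|_\beta$ agree on $X_\alpha\cap X_\beta$: open balls match, and hence the basic neighbourhoods $\U(V,O)$ inherited from $\E_\alpha$ and from $\E_\beta$ generate the same relative topology on $\E_\alpha\cap \E_\beta$. Next, each $\E_\alpha$ is open in the coherent topology on $\E$: for every $\beta$ one has $\E_\alpha \cap \E_\beta = p_\beta^{-1}(V_\alpha\cap V_\beta)$, which is open in $\E_\beta$ since $p_\beta$ is continuous (Proposition \ref{P:unif} applied to $\E_\beta$) and $V_\alpha\cap V_\beta$ is open in $V_\beta$.

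With these in place, the Banach bundle axioms for $\xi := (\E,p,T)$ follow locally. The projection $p$ restricts on each open $\E_\alpha$ to the continuous open surjection of Proposition \ref{P:unif} onto $V_\alpha$, so $p$ is continuous and onto $T$. The fiber $p^{-1}(t) = \{t\}\times X(t)$ carries an unambiguous Banach space structure, inherited from $X(t)\subset X_\alpha$ for any $\alpha$ with $t\in V_\alpha$ (well-defined because the norms agree on $X_\alpha\cap X_\beta$). Continuity of the norm, of scalar multiplication, and of addition are checked at a single point $(t_0,x_0)\in \E$ by choosing $\alpha$ with $t_0\in V_\alpha$ and working inside the open set $\E_\alpha$, where these properties have already been established in Proposition \ref{P:unif}. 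Thus $\xi$ is a continuous Banach bundle.

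For the locally uniform conclusion I would use regularity of $T$. For each $t\in T$ pick $\alpha(t)\in \A$ with $t\in V_{\alpha(t)}$ and then, by regularity, a closed neighbourhood $K_t$ of $t$ with $K_t\subset V_{\alpha(t)}$. The restriction $\xi\mid V_{\alpha(t)}$ is (by construction and Proposition \ref{P:unif}) a uniform Banach bundle, and its further restriction to the closed subset $K_t$ is again of the form in Proposition \ref{P:unif} (just restrict the continuous map into the Hausdorff hyperspace $\mathcal{M}_{\alpha(t)}$ to $K_t$), hence uniform. The family $\{K_t\}_{t\in T}$ is then a family of closed subsets whose interiors cover $T$ and for which each $\xi\mid K_t$ is uniform, so $\xi$ is locally uniform.

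The main obstacle I anticipate is the bookkeeping in the first step: confirming that the coherent topology genuinely assembles into a Banach bundle and is not finer than needed on overlaps, so that continuity of every structure map can legitimately be checked one chart $\E_\alpha$ at a time. Once the matching of relative topologies on $\E_\alpha\cap \E_\beta$ and the openness of each $\E_\alpha$ in $\E$ are in hand, everything else is a local appeal to Proposition \ref{P:unif} together with the regularity of $T$.
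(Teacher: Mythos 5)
Your proof is correct and follows essentially the same route as the paper: the paper's entire argument is your final step, namely using regularity of $T$ to choose, for each $t\in V_{\alpha}$, a closed set $A_{t,\alpha}$ with $t\in \mathrm{Int}(A_{t,\alpha})\subset A_{t,\alpha}\subset V_{\alpha}$ and observing that $\xi\mid A_{t,\alpha}$ is uniform. The additional verifications you carry out (agreement of the relative topologies on $\E_{\alpha}\cap \E_{\beta}$, openness of each $\E_{\alpha}$ in the coherent topology, and the local check of the bundle axioms via Proposition \ref{P:unif}) are exactly the facts the paper records without proof in the paragraph preceding the proposition, so you have simply made explicit what the paper leaves to the reader.
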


\begin{proof}

   For $t\in V_{\alpha}$ let $A_{t,\alpha}$ be a closed set such that $t\in \mathrm{Int}(A_{t,\alpha})\subset A_{t,\alpha}\subset V_{\alpha}$.
   Clearly the restriction of $\xi$ to $A_{t,\alpha}$ is a uniform Banach bundle.

\end{proof}

Continuing with the above notations, if $Y(t)$ is a closed subspace of $X(t)$, $t\in T$, and $t\to Y(t)_1$ is continuous on $T$ then, with $\F
:= \{(t,x)\mid t\in T, x\in Y(t)\}$, $\zeta := (\F,p\mid \F, T)$ is a (locally uniform) Banach subbundle of $\xi$. Indeed, if $V$ is an open
subset of $V_{\alpha}$ and $O$ is an open subset of $X_{\alpha}$ then
\begin{multline*}
 \{(t,x)\mid t\in V,\ x\in O\cap X(t)\}\cap \{(t,x)\mid t\in V_{\alpha},\ x\in Y(t)\} =\\
             \{(t,x)\mid t\in V,\ x\in O\cap Y(t)\}
\end{multline*}
is a typical open subset of $\F$. Now it is clear that the Banach bundle constructed in Proposition \ref{P:unif} is a Banach subbundle of a
trivial Banach bundle. Indeed, with the notations used there, let $\E^{\pr} := T\times X$ and $p^{\pr}$ the projection on $T$. Then $\xi$ is by
the preceding remarks a Banach subbundle of $\xi^{\pr} := (\E^{\pr},p^{\pr},T)$. Note that $\E$ is a closed subset of $\E^{\pr}$. To see this,
let $\{(t_{\iota},x_{\iota})\}_{\iota\in \mathcal{I}}$ be a net in $\E$ converging to $(t,x)\in T\times X$. There is no loss of generality if we
suppose that $x_{\iota}\in B_X(0_X,1)$, $\iota\in \mathcal{I}$. For each $\iota$ there exists $y_{\iota}\in \E(t)$ such that $\|x_{\iota} -
y_{\iota}\|\leq d(\E(t_{\iota})_1,\E(t)_1)$, $d$ being the Hausdorff metric. Now $d(\E(t_{\iota})_1,\E(t)_1)\to 0$ so $(t,x)\in \E$. On the
other hand, not every Banach subbundle of a trivial Banach bundle which has a closed bundle space is a uniform Banach bundle.

\begin{ex} \label{E:closed}
   Let $T :=\mathbb{N}\cup \{\infty\}$ with the usual topology, $c_0$ the Banach space of all null-convergent sequences of scalars, $\E := T\times
   c_0$ and $p$ the projection of $\E$ onto $T$. We shall construct a Banach subbundle of the trivial Banach bundle $\xi := (\E,p,T)$. Denote by
   $e_n\in c_0$ the sequence $\{\delta_n^k\}_{k=1}^{\infty}$ and let $\F(n)$ be the linear subspace of $\{n\}\times c_0$ spanned by $(n,e_1)$ and
   $(n,e_n)$. We let $\F(\infty)$ be the one-dimensional space spanned by $(\infty,e_1)$. With $\F := \cup_{t\in T} \F(t)$ the Banach subbundle
   $\zeta := (\F,p\mid\F,T)$ of $\xi$ has a closed bundle space. However, the bundle $\zeta$ is not locally uniform because of the drop in
   dimension at $\infty$.

\end{ex}

We have seen in Theorem \ref{T:Gamma0} that for a bundle map it is of interest to have a continuous norm in the image bundle. We discuss now
conditions that insure that the norm in a quotient bundle is continuous. It may happen that a quotient bundle of a continuous Banach bundle has
a discontinuous norm as the following example shows.

\begin{ex}

   Let $T := \mathbb{N}\cup \{\infty\}$ with the usual topology, $\xi := (T\times \mathbb{K},p,T)$ the one-dimensional trivial bundle over
   $T$, and $\eta$ its subbundle whose bundle space is $\F := (\mathbb{N}\times \mathbb{K})\cup (\{\infty\}\times \{0\})$. Then the quotient
   bundle space is $(\mathbb{N}\times \{0\})\cup (\{\infty\}\times \mathbb{K})$ with a non-Hausdorff topology. Hence the quotient bundle is not
   a continuous Banach bundle.

\end{ex}

\begin{qu}

   Suppose $\eta$ is a quotient bundle of a continuous Banach bundle and the bundle space of $\eta$ is Hausdorff. Must $\eta$ be a continuous
   Banach bundle?

\end{qu}

\begin{prop} \label{P:qcont2}

   Let $\xi := (\E,p,T)$ be a locally uniform Banach bundle and $\xi^{\pr} := (\E^{\pr},p\mid \E^{\pr},T)$ a locally uniform subbundle. Then the norm on the quotient
   bundle $\eta := (\E/\E^{\pr},p^{\pr},T)$ is continuous.

\end{prop}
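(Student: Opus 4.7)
The plan is to reduce to a local uniform realization of both bundles with a common ambient Banach space, establish joint continuity of a distance function in that ambient space, and then transfer continuity to the quotient bundle via the openness of $q$. Concretely, since continuity of the norm on $\eta$ is a local property of the bundle space $\E/\E^{\pr}$, I fix $t_0\in T$ and pass to a closed neighbourhood $A$ of $t_0$ on which both $\xi\mid A$ and $\xi^{\pr}\mid A$ are uniform with a common ambient Banach space $X$. Then $\E(t) = X(t)$ and $\E^{\pr}(t) = Y(t)$ are closed subspaces with $Y(t)\subset X(t)\subset X$, and both $t\to X(t)_1$ and $t\to Y(t)_1$ are continuous from $A$ into the bounded closed subsets of $X$ under the Hausdorff metric; by homogeneity of subspaces, $t\to Y(t)_M$ is then Hausdorff-continuous for every $M > 0$.

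The heart of the proof is to show that $F : A\times X\to [0,\infty)$ given by $F(t,x) := \inf\{\|x - y\| \mid y\in Y(t)\}$ is jointly continuous. For upper semicontinuity at $(t_0,x_0)$, given $\varepsilon > 0$ I pick $y_0\in Y(t_0)$ with $\|x_0 - y_0\| < F(t_0,x_0) + \varepsilon$ and use the Hausdorff continuity of $t\to Y(t)_{\|y_0\|+1}$ to produce $y_t\in Y(t)$ with $\|y_0 - y_t\|$ small for $t$ near $t_0$; then $F(t,x) \leq \|x - x_0\| + \|x_0 - y_0\| + \|y_0 - y_t\|$. For lower semicontinuity along a net $(t_\iota, x_\iota)\to (t_0,x_0)$, choose $y_\iota\in Y(t_\iota)$ with $\|x_\iota - y_\iota\| < F(t_\iota,x_\iota) + \varepsilon$; the estimates $\|y_\iota\|\leq \|x_\iota\| + F(t_\iota,x_\iota) + \varepsilon$ and $F(t_\iota,x_\iota)\leq \|x_\iota\|$ keep $\{y_\iota\}$ eventually norm-bounded, so Hausdorff continuity yields $y_\iota^{\ast}\in Y(t_0)$ with $\|y_\iota - y_\iota^{\ast}\|\to 0$, and the triangle inequality gives $F(t_0,x_0)\leq \liminf F(t_\iota,x_\iota) + \varepsilon$.

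The quotient norm is $\|q(x)\| = F(p(x),x)$ for every $x\in \E$. Given $z_0 = q(x_0)\in \E/\E^{\pr}$ with $p(x_0) = t_0$ and a net $z_\alpha\to z_0$ in $\E/\E^{\pr}$, the openness of $q$ recalled in the introduction allows me to pass to a subnet indexed by pairs $(\alpha,U)$ of original indices and neighbourhoods $U$ of $x_0$ in $\E$ and to choose lifts $\tilde x_\beta\in q^{-1}(z_\beta)$ converging to $x_0$ in $\E$; in the uniform realization this just means $p(\tilde x_\beta)\to t_0$ in $T$ and $\tilde x_\beta\to x_0$ in $X$. Then $\|z_\beta\| = F(p(\tilde x_\beta),\tilde x_\beta)\to F(t_0,x_0) = \|z_0\|$ by the continuity of $F$, and since every subnet of $(z_\alpha)$ admits such a further subnet, $\|z_\alpha\|\to \|z_0\|$.

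The main obstacle lies in the very first step: arranging that over a neighbourhood of $t_0$ both bundles can be simultaneously realized as uniform with the same ambient Banach space and, crucially, that the Hausdorff continuity of $Y(t)_1$ holds with respect to the ambient of $\xi$ rather than a possibly distinct intrinsic ambient of $\xi^{\pr}$. Under the construction of Proposition \ref{P:lunif} this is built in, while in general one must exploit that the subbundle topology on $\E^{\pr}$ inherited from $\E$ coincides with the intrinsic uniform topology of $\xi^{\pr}$ in order to transfer the Hausdorff continuity of the fibres from the intrinsic ambient to that of $\xi$.
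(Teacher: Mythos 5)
Your argument is correct and its analytic core coincides with the paper's: after the same reduction to the uniform case, both proofs hinge on choosing a near-minimizer $\tilde{x}\in Y(t)$ for the quotient norm, bounding it by $r=2\|x_0\|+2$, and replacing it by a Hausdorff-close element of $Y(t_0)_r$ via the continuity of $t\to Y(t)_r$. Where you differ is in the packaging. The paper only proves \emph{lower} semicontinuity of the quotient norm, since upper semicontinuity is already guaranteed by the quotient being a Banach bundle; it does so directly on $\E/\E^{\pr}$ by showing that each superlevel set $\{\|y\|>\alpha\}$ contains $q(\U(V,B(x_0,\varepsilon)))$, so no lifting of nets is needed. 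You instead establish joint continuity of the distance function $F(t,x)=\mathrm{dist}(x,Y(t))$ on $A\times X$ (reproving the upper semicontinuous half) and then transfer it to the quotient by the subnet-lifting argument through the open map $q$; this is a clean, reusable statement, at the cost of the extra net bookkeeping, and it is a legitimate alternative route. One small point of care in your transfer step: convergence of the lifts $\tilde{x}_\beta\to x_0$ in $\E$ does give convergence of the $X$-components because $\E$ carries the topology induced from $T\times X$, as the paper notes before Example \ref{E:closed}, so that step is fine. Finally, the difficulty you flag at the end --- realizing the Hausdorff continuity of $t\to Y(t)_1$ in the ambient space of $\xi$ rather than in an intrinsic ambient of $\xi^{\pr}$ --- is not resolved in the paper either: the published proof simply \emph{assumes} that $Y(t)\subset X(t)\subset X$ with $t\to Y(t)_1$ continuous into the closed subsets of the common ambient $X$. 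So you have not introduced a new gap; you have merely made explicit an assumption the paper makes silently.
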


\begin{proof}

   A moment's reflection shows that it is enough to consider only the case when $\xi$ and $\xi^{\pr}$ are uniform. Thus we shall suppose that $\xi$ is a
   bundle as described in Proposition \ref{P:unif} keeping the notation used there. We shall also suppose that $Y(t)$ is a closed subspace of
   $X(t)$ and that the map $t\to Y(t)_1$, $t\in T$, is continuous into the the space of all closed subsets of the ambient space $X$. Then $\E ;+
   \cup_{t\in T} (\{t\}\times X(t))$ and $\E^{\pr} ;= \cup_{t\in T} (\{t\}\times Y(t))$. Denote by $q$ the quotient map from $\E$ to
   $\E/\E^{\pr}$ and let $\alpha\geq 0$; we are going to show that $O := \{y\in \E/\E^{\pr}\mid \|y\| > \alpha\}$ is open in $\E/\E^{\pr}$. Suppose then that
   $y_0\in O$ and choose numbers $\beta$ and $\varepsilon$ satisfying $\alpha < \beta < \|y_0\|$ and $0 < \varepsilon < \min\{1/2,
   1/3(\|y_0\| - \beta)\}$. Let $(t_0,x_0)\in \E$ with $q((t_0,x_0)) = y_0$ and put $r := 2\|x_0\| + 2$. There exists a neighbourhood $V$ of $t_0$
   in $T$ such that $d(Y(t)_r,Y(t_0)_r) < \varepsilon$ if $t\in V$. The set $\U(V,B(x_0,\varepsilon)) := \{(t,x)\mid t\in V,\ x\in
   B(x_0,\varepsilon)\cap X(t)\}$
   is an open neighbourhood of $(t_0,x_0)$ and $q(\U(V,B(x_0,\varepsilon)))$ is an open neighbourhood of $y_0$, $q$ being an open map. We shall show
   that $q(\U(V,B(x_0,\varepsilon)))\subset O$ and this will establish our claim about $O$ thereby completing the proof.

   Let $(t,x)\in \U(V,B(x_0,\epsilon))$ and denote $y := q(x)$. Our goal is to show that $\|y\| > \alpha$. There exists $\tilde{x}\in Y(t)$ such
   that $\|y\|\leq \|x - \tilde{x}\| < \|y\| + \varepsilon$. We have $\|\tilde{x}\|\leq r$. Indeed, if we assume by contradiction that
   $\|\tilde{x}\| > r = 2\|x_0\| + 2$ then $\|\tilde{x}\| > 2\|x - (x - x_0)\| + 2 > 2(\|x\| - \varepsilon) + 2 > 2\|x\| + 1$. But then
   \[
    \|x - \tilde{x}\|\geq \|\tilde{x}\| - \|x\| > \|x\| + 1 > \|y\| + \varepsilon,
   \]
   a contradiction.
   Now, since $t\in V$ and $\tilde{x}\in Y(t)_r$, there exists $\tilde{x}_0\in Y(t_0)_r$ such that $\|\tilde{x} - \tilde{x}_0\| <
   \varepsilon$. We have
   \begin{multline}
      \|y\| > \|x - \tilde{x}\| - \varepsilon\geq \|(\tilde{x} - \tilde{x}_0) + (\tilde{x}_0 - x_0)\| - \|x_0 - x\| - \varepsilon\geq \\
               \|\tilde{x}_0 - x_0\| - \|\tilde{x} - \tilde{x}_0\| - \|x_0 - x\| - \varepsilon\geq \|y_0\| - 3\varepsilon > \beta >\alpha
   \end{multline}
   and we are done.

\end{proof}

The quotient bundle of a uniform Banach bundle may have a continuous norm even when the subbundle that produces it is not a locally uniform
subbundle as the following example shows.

\begin{ex}

   As before we denote by $c_0$ the Banach space of all null converging sequences of scalars and $e_n := \{\delta_n^k\}_{k=1}^{\infty}$, $n\geq
   1$. Let $T := \mathbb{N}\cup \{\infty\}$ with the usual topology, $\E := T\times c_0$, and $p$ be the projection onto $T$. We let $X_n$ be the
   subspace of $c_0$ spanned by $\{e_l\}_{l=1}^n$, $\E^{\pr}(n) := \{n\}\times X_n$, and $\E^{\pr}(\infty) := \E(\infty)$. We affirm that $\xi^{\pr} :=
   (\E^{\pr},p\mid \E^{\pr},T)$ is a subbundle of the trivial bundle $\xi := (\E,p,T)$. Indeed, suppose $U\subset \E$ is open and $(\infty,x)\in
   U\cap \E^{\pr}(\infty)$. There are $r > 0$ and $m\in \mathbb{N}$ with the property that if $y\in c_0$, $\|y - x\| < r$, and $n\geq m$ then $(n,y)\in
   U$. Let $k\in \mathbb{N}$ be such that $\bar{x} ; = (x_1,\dots,x_k,0\dots)$ satisfies $\|\bar{x} - x\| < r$. Then if $n > \max\{m,k\}$ one has
   $(n,\bar{x})\in U\cap \E^{\pr}(n)$. Thus we found a neighbourhood of $\infty$ included in $p(U)$ and the claim is proved.

   Clearly $\xi^{\pr}$ is not a locally uniform subbundle because of the disparity in the dimension of the fiber at $\infty$ with the dimensions
   of the other fibers in any neighbourhood of $\infty$. It remains to show that the norm in the quotient bundle is continuous. Only the
   continuity at the points of the fiber at $\infty$ must be proven; but since the fiber at $\infty$ of the quotient bundle is trivial this fact
   is obvious.

\end{ex}

\appendix
\section{Some topological properties of the bundle space}

We shall now record several topological properties of the bundle space of a Banach bundle. In the following $\xi := (\E,p,T)$ will denote a
Banach bundle.

The first result treats the complete regularity of the bundle space. A result with the same conclusion but under different hypotheses is
\cite[Lemma 2.1.6]{Gut}.

\begin{prop} \label{P:cr}

   If $\xi$ is a continuous Banach bundle and $T$ is locally paracompact then $\E$ is completely regular.

\end{prop}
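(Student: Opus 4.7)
The plan is to verify complete regularity directly: given $x_0 \in \E$ and a closed $F \subseteq \E$ with $x_0 \notin F$, construct a continuous $\varphi \colon \E \to [0,1]$ with $\varphi(x_0) = 0$ and $\varphi|_F \equiv 1$. Hausdorffness of $\E$ is already guaranteed since $\xi$ is a continuous Banach bundle (Proposition 16.4 of \cite{G}, as recalled in the introduction), so only the separating function needs to be built.

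First, set $t_0 := p(x_0)$ and use local paracompactness of $T$ to pick an open $W \ni t_0$ with $\overline{W}$ paracompact. The restriction $\xi|_{\overline{W}}$ has paracompact base, so Corollary \ref{C:ext} applied to the closed singleton $\{t_0\} \subset \overline{W}$ yields a section $f \in \Gamma(\xi|_{\overline{W}})$ with $f(t_0) = x_0$. Since $\xi|_{\overline{W}}$ is then full, tubes form a base for the topology of $p^{-1}(\overline{W})$; as each tube $U(f,V,a)$ with $V \subseteq W$ open is open in $\E$ globally, one obtains a tube neighborhood $U(f,V,a)$ of $x_0$ disjoint from $F$. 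Using regularity of the paracompact space $\overline{W}$, one can further arrange that the $V$-closure $\overline{V}$ (in $\overline{W}$) is contained in $W$.

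Next, invoke normality of $\overline{W}$ to get a Urysohn function $g \colon \overline{W} \to [0,1]$ with $g(t_0) = 0$ and $g \equiv 1$ on $\overline{W}\setminus V$. Because $\overline{V} \subseteq W$ forces $\partial W \subseteq \overline{W}\setminus V$, the extension $\tilde{g} \colon T \to [0,1]$ defined by $\tilde{g} = g$ on $\overline{W}$ and $\tilde{g} \equiv 1$ on $T\setminus W$ is well defined and continuous (the two pieces match on $\partial W$ and each is continuous on its own open or closed piece). Now define
\[
 \varphi(x) := \max\!\Bigl(\tilde{g}(p(x)),\ \min\bigl(1,\ \tfrac{1}{a}\|x - f(p(x))\|\bigr)\Bigr),\qquad x \in p^{-1}(V),
\]
and $\varphi(x) := 1$ for $x \in \E\setminus p^{-1}(V)$. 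Continuity on $p^{-1}(V)$ uses that $x \mapsto x - f(p(x))$ is continuous into $\E$ and that $\xi$ has continuous norm; $\varphi(x_0) = 0$ and $\varphi \equiv 1$ on $F$ hold by the choices above.

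The main obstacle is matching the two pieces of $\varphi$ across the preimage of $\partial V$: one needs $\tilde{g}(p(x)) = 1$ there, which is exactly why we required $\overline{V} \subseteq W$ so that $\partial V \subseteq \overline{W}\setminus V$. A parallel point is the extension of $g$ across $\partial W$, handled by the same containment. Once these boundary matchings are in place, continuity of $\varphi$ on each of the open piece $p^{-1}(V)$, the open piece $\E \setminus p^{-1}(\overline{V})$, and along the interface reduces to verifying that $\tilde{g} \circ p \to 1$ as one approaches $p^{-1}(\partial V)$, which follows from the Urysohn construction.
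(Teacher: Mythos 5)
Your proof is correct and follows essentially the same route as the paper's: Hausdorffness from continuity of the norm, a local section through $x_0$ obtained from Corollary \ref{C:ext} over a paracompact closed neighbourhood, a tube around that section missing the closed set, and a separating function built from the fibrewise distance to the section combined with a Urysohn function on the base. The only differences are cosmetic (you separate with values $0$ at $x_0$ and $1$ on $F$ via a $\max/\min$ combination where the paper uses a product normalized the other way), and you are in fact somewhat more careful than the paper about constructing and extending the base Urysohn function.
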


\begin{proof}

   First remark that $\E$ is a Hausdorff space since the norm is continuous on it. Let $A$ be a closed subset of $\E$ and $x_0\in \E$
   with $x_0\notin A$. There are an open neighbourhood $V$ of $t_0 := p(x_0)$ in $T$ such that
   $\overline{V}$ is paracompact and a continuous function $h : T\to [0,1]$ satisfying $h(t_0) = 1$ and $h\mid T\setminus V \equiv 0$. Corollary
   \ref{C:ext} yields a section $\varphi$ of $\xi\mid \overline{V}$ such that $\varphi(t_0) = x_0$; there exists $\alpha > 0$ such that $\{x\in
   \E\mid p(x)\in V,\ \|x - \varphi(p(x))\| < \alpha\}\cap A = \mset$. Let $\mu : [0,\infty)\to [0,1]$ be given by
   \[
      \mu(s) :=
      \begin{cases}
        1 - s/\alpha,   &\text{if $0\leq s\leq \alpha$,}\\
        0,              &\text{if $\alpha < s$}.
      \end{cases}
   \]
   The function $f: \E\to [0,1]$ defined by
   \begin{equation*}
      f(x) :=
      \begin{cases}
         \mu(\|x - \varphi(p(x))\|)h(p(x)), &\text{if $p(x)\in V$,}\\
         0,                                 &\text{if $p(x)\notin V$}
      \end{cases}
   \end{equation*}
   satisfies $f(x_0) = 1$ and $f\mid A \equiv 0$ as needed.

\end{proof}

We proceed now to discuss a setting in which $\E$ is a Baire space. We begin with some definitions and results from \cite{O}. A family $\B$ of
non-void open subsets of a topological space is call ed a pseudo-base of the space if every open non-void subset contains an element of $\B$. A
topological space $Z$ is called quasi-regular if each non-void open set of $Z$ contains the closure of some non-void open set. A topological
space $Z$ is called pseudo-complete if it is quasi-regular and there exists a sequence $\{\B_n\}$ of pseudo-bases of $Z$ with the property that
if $U_n\in \B_n$ and $\overline{U_{n+1}}\subset U_n$, $n = 1,2,\ldots$ then $\cap_n U_n\neq \mset$. It is easily seen that complete metric
spaces and locally compact Hausdorff spaces are pseudo-complete. The cartesian product of any family of pseudo-complete spaces is
pseudo-complete, see \cite[Theorem 6]{O}. Any pseudo-complete-space is a Baire space, see \cite[5.1]{O}.

\begin{prop}

   Suppose that the base space $T$ is pseudo-complete and locally paracompact. Then $\E$ is a Baire space.

\end{prop}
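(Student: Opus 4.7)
The plan is to apply Oxtoby's criterion: show that $\E$ is pseudo-complete, whence Baire. Two things must be verified, namely quasi-regularity of $\E$ and the existence of a sequence $\{\B_n\}$ of pseudo-bases with the shrinking-closure property.

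I would first set up the tube-base machinery. Local paracompactness of $T$ gives an open cover $\{W_\alpha\}$ of $T$ with each $\overline{W_\alpha}$ paracompact, and Corollary \ref{C:ext} makes each $\xi\mid_{\overline{W_\alpha}}$ full, so over such pieces tubes form a neighbourhood base. The standard argument combining continuity of $x\mapsto x-\varphi(p(x))$ on $p^{-1}(\overline{W_\alpha})$ with continuity of the norm shows that if $\overline{V'}\subset V\subset W_\alpha$ and $a'<a$, then $\overline{U(\varphi,V',a')}\subset U(\varphi,V,a)$. Since $T$ inherits quasi-regularity from pseudo-completeness, this yields the quasi-regularity of $\E$.

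Next, let $\{\mathcal{D}_n\}$ be the sequence of pseudo-bases of $T$ witnessing its pseudo-completeness. Define $\B_n$ to consist of all tubes $U(\varphi,V,a)$ with $V\in\mathcal{D}_n$, $\overline{V}$ paracompact, $\varphi\in\Gamma(\xi\mid_{\overline{V}})$, and $0<a<1/n$. A routine refinement (inside any non-void open $W\subset\E$ descend to a tube over a base with paracompact closure using local paracompactness, then apply quasi-regularity of $T$ and the pseudo-base property of $\mathcal{D}_n$) shows each $\B_n$ is a pseudo-base. The shrinking verification is the heart of the argument: suppose $U_n:=U(\varphi_n,V_n,a_n)\in\B_n$ and $\overline{U_{n+1}}\subset U_n$. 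Since $\varphi_{n+1}$ is a section on all of $\overline{V_{n+1}}$, its image $\varphi_{n+1}(\overline{V_{n+1}})$ lies in $\overline{U_{n+1}}$, so $\overline{V_{n+1}}\subset p(\overline{U_{n+1}})\subset V_n$. Pseudo-completeness of $T$ then yields $t_0\in\bigcap_n V_n$. For $m>n$, $\varphi_m(t_0)\in\overline{U_{n+1}}\cap p^{-1}(t_0)\subset U_n$, hence $\|\varphi_m(t_0)-\varphi_n(t_0)\|<a_n<1/n$, so $\{\varphi_n(t_0)\}$ is Cauchy in the Banach space $p^{-1}(t_0)$. Let $x_0$ be its limit. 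Because $p^{-1}(t_0)$ is closed in $\E$, the set $\overline{U_{n+1}}\cap p^{-1}(t_0)$ is closed in $p^{-1}(t_0)$, forcing $x_0\in\overline{U_{n+1}}\subset U_n$ for every $n$; thus $\bigcap_n U_n\neq\mset$.

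The main obstacle is precisely this last step: one must simultaneously control the base (pseudo-completeness of $T$ produces the common point $t_0$) and the fibers (the nested tube condition makes the evaluations at $t_0$ Cauchy in a fixed Banach space). The technical hinge that makes the two layers fit is the extendability of each $\varphi_n$ to a continuous section over $\overline{V_n}$, which is exactly why the pseudo-base $\B_n$ is required to use bases with paracompact closure and why local paracompactness of $T$ appears as a hypothesis.
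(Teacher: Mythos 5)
Your strategy --- prove that $\E$ is pseudo-complete and then invoke Oxtoby's theorem --- is more ambitious than the paper's, which verifies the Baire property directly, and your verification of the nested-intersection condition is essentially sound (and is, in substance, the same computation the paper performs: a common base point $t_0$ from pseudo-completeness of $T$, a Cauchy sequence $\{\varphi_n(t_0)\}$ in the fiber, and closedness of the fiber to place the limit in every $U_n$). The gap lies earlier, in the two steps you justify by ``continuity of the norm.'' The proposition is stated for a general (H) Banach bundle: $x\mapsto\|x\|$ is only \emph{upper} semi-continuous on $\E$ (the appendix results that need continuity, such as the complete regularity statement, say so explicitly; this one does not). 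Upper semi-continuity gives you that $\{x\mid \|x-\varphi(p(x))\|<a\}$ is open, but it does \emph{not} give $\overline{U(\varphi,V^{\pr},a^{\pr})}\subset U(\varphi,V,a)$ for $a^{\pr}<a$: if $x_\iota\to x$ with $\|x_\iota-\varphi(p(x_\iota))\|<a^{\pr}$, upper semi-continuity only yields $\limsup\|x_\iota-\varphi(p(x_\iota))\|\leq\|x-\varphi(p(x))\|$, which bounds the wrong side. Concretely, take the trivial line bundle over $[0,1]$ with product topology but fiber norm $|c|$ over $t>0$ and $2|c|$ over $t=0$; this is a legitimate (H) bundle, and the closure of $\{\|x\|<a^{\pr}\}$ contains points of norm arbitrarily close to $2a^{\pr}$. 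Since the jump of an upper semi-continuous norm is not uniformly controllable, both your quasi-regularity argument for $\E$ and the closure inclusion on which Oxtoby's machine runs (one needs $\overline{U_{n+1}}\subset U_n\cap G_{n+1}$ at each inductive step) are unsupported.

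The paper sidesteps this entirely: it never takes topological closures of tubes. It builds sets $U_n:=\{x\mid p(x)\in V_n,\ \|x-\varphi_n(p(x))\|\leq\varepsilon_n\}$ defined by a non-strict inequality (which need not be closed), arranges $U_n\subset U_{n-1}\cap G_n$ by shrinking inside \emph{open} tubes, and at the end checks $x_0\in U_n$ purely fiberwise, via $\|x_0-\varphi_n(t_0)\|=\lim_m\|\varphi_m(t_0)-\varphi_n(t_0)\|\leq\varepsilon_n$, where the norm is the genuine Banach norm of $p^{-1}(t_0)$ and hence continuous. If you add the hypothesis that $\xi$ is a continuous Banach bundle, your argument goes through and in fact proves the stronger conclusion that $\E$ is pseudo-complete; as written, for a general (H) bundle, it does not establish the proposition.
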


\begin{proof}

   Let $\{\B_n\}$ be sequence of pseudo-bases of $T$ as in the definition of the pseudo-completeness. Let $\{G_n\}$ be a sequence of open dense
   subsets of $\E$ and $G$ an open subset of $\E$. By the assumption of local paracompactness of $T$ and \cite[p. 10]{DG} there are an open
   subset $V_1$ of $T$, a section $\varphi_1$ of $\xi$ over $V_1$, and a number $\varepsilon_1$, $0 < \varepsilon_1 < 2^{-1}$, such that
   \[
    U_1 := \{x\in \E\mid p(x)\in V_1,\ \|x - \varphi_1(p(x))\|\leq \varepsilon_1\}\subset G\cap G_1.
   \]
   From the assumption of pseudo-completeness of $T$ we infer that there exists $W_1\in \B_1$ such that $W_1\subset \overline{W_1}\subset V_1$.
   Now there are an open subset $V_2\subset W_1$ of $T$, a section $\varphi_2$ of $\xi$ over $V_2\subset W_1$, and a number $\varepsilon_2$, $0 <
   \varepsilon_2 < 2^{-2}$, such that
   \[
    U_2 := \{x\in \E\mid p(x)\in V_2,\ \|x - \varphi_2(p(x))\|\leq \varepsilon_2\}\subset U_1\cap G_2.
   \]
   There exists $W_2\in \B_2$ such that $W_2\subset \overline{W_2}\subset V_2$. An obvious induction process shows that for every natural number
   $n\geq 2$ there exist an open subset $V_n\subset W_{n-1}$ of $T$, a section $\varphi_n$ of $\xi$ over $V_n$, and a number $\varepsilon_n$, $0 <
   \varepsilon_n < 2^{-n}$, such that
   \[
    U_n := \{x\in \E\mid p(x)\in V_n,\ \|x - \varphi_n(p(x))\|\leq \varepsilon_n\}\subset U_{n-1}\cap G_n
   \]
   and there exists $W_n\in \B_n$ such that $W_n\subset \overline{W_n}\subset V_n$.

   We have $\cap_{n=1}^{\infty} V_n = \cap_{n=1}^{\infty} W_n\neq \mset$. Pick $t_0\in \cap_n V_n$. From $\{\varphi_n(t_0) - \varphi_{n-1}(t_0)\|\leq \varepsilon_{n-1} < 2^{n-1}$
   we infer that $\{\varphi_n(t_0)\}$ is a Cauchy sequence in the Banach space $p^{-1}(t_0)$; it converges to a point $x_0$ in this space. The
   inequality $\|x_0 - \varphi_n(t_0)\|\leq \varepsilon_n$, $n = 1,2,\ldots$, shows that $x_0\in \cap_{n=1}^{\infty} U_n\subset
   G\cap\bigcap_{n=1}^{ \infty} G_n$ and the proof is complete.

\end{proof}

We have seen in Propositions \ref{P:BG1} and \ref{P:hom} that it is of interest to know if a given Banach bundle has a paracompact bundle space.
We turn now to examining some cases when $\E$ is paracompact or metrizable. Suppose $\xi$ is a uniform Banach bundle.Then, as remarked above in
the paragraph preceding Example \ref{E:closed}, there exists a Banach space $X$ such that $\E$ is a closed subset of $T\times X$. If $T$ is
regular and $\sigma-compact$ or paracompact and perfectly normal then $T\times X$ is paracompact by \cite[Propositions 4 and 5]{M0}. Hence in
these cases $\E$ is paracompact by \cite[p. 218]{E}. With some more work one can obtain the same conclusion when $\xi$ is a locally uniform
Banach bundle. Indeed, let $\{T_{\alpha}\}$ be a family of closed subsets of $T$ such that $\{\mathrm{Int}(T_{\alpha})\}$ is an open cover of
$T$ with non-void subsets and the restriction of $\xi$ to each $T_{\alpha}$ is a uniform Banach bundle. In both cases mentioned above $T$ is
paracompact so we may suppose that the family of sets $\{T_{\alpha}\}$ is locally finite. Each $T_{\alpha}$ inherits the properties of $T$ we
consider so each $p^{-1}(T_{\alpha})$ is paracompact. But $\{p^{-1}(T_{\alpha})\}$ is a locally finite closed cover of $\E$ so $\E$ is
paracompact.

Now if $\xi$ is a uniform Banach bundle with a metrizable base space $T$ then $\E$ is metrizable by being a subset of a metrizable cartesian
product. Again this remains true for a locally uniform Banach bundle $\xi$. If $\{T_{\alpha}\}$ is a locally finite closed cover of $T$ as in
the previous paragraph then $\{p^{-1}(T_{\alpha})\}$ is a locally finite closed cover of $\E$ with metrizable subspaces. Then a result of
Nagata, see \cite[p. 204]{E}, implies that $\E$ is metrizable.

\begin{prop}

   Let $\xi$ be a continuous Banach bundle with a regular base space $T$ that has a countable base. Moreover, suppose that the Banach space
   $\Gamma_b(\xi)$ is separable. Then $\E$ is regular and has a countable base.

\end{prop}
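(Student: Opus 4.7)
The plan is to establish the two conclusions separately, using the hypotheses in complementary ways: the regularity of $T$ combined with a countable base yields paracompactness (via Urysohn metrization), which unlocks regularity of $\E$, while the separability of $\Gamma_b(\xi)$ together with a countable base of $T$ yields a countable base of $\E$ through the tube base.

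First, since $T$ is regular and second countable, Urysohn's theorem makes it separable metrizable, hence paracompact. The bundle $\xi$ is Hausdorff because the norm is continuous, and $\xi$ is full by the Douady--dal Soglio-Hérault--Hofmann theorem (equivalently, by Corollary \ref{C:ext}). Proposition \ref{P:cr} then gives complete regularity of $\E$, and in particular regularity.

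For the countable base, I would exploit that tubes form a base for the topology of $\E$, and in fact tubes of the form $U(g,V,a)$ with $g\in \Gamma_b(\xi)$ form a base: given any tube $U(f,V,a)$ containing $x$, fullness in the form $\Gamma_b(\xi)$ (see \cite[p. 14]{DG}) yields $g\in \Gamma_b(\xi)$ with $g(p(x))=x$, and continuity of $\|g-f\|$ at $p(x)$ lets one shrink $V$ and $a$ so that a $g$-tube sits inside the original $f$-tube. Let $\{f_n\}$ be a countable dense subset of $\Gamma_b(\xi)$, let $\{V_m\}$ be a countable base of $T$, and consider the countable family
\[
 \B := \{U(f_n,V_m,1/k) \mid n,m,k\in \mathbb N\}.
\]
I claim $\B$ is a base.

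To prove the claim, fix $x\in \E$ and an open neighbourhood $W$ of $x$. By the preceding paragraph there exist $g\in \Gamma_b(\xi)$, an open neighbourhood $V$ of $p(x)$, and $a>0$ such that $x\in U(g,V,a)\subset W$ (with $\|x-g(p(x))\|=0$). Pick $k$ with $2/k<a$, then $f_n$ with $\|f_n-g\|_\infty <1/k$, and $V_m$ with $p(x)\in V_m\subset V$. Then $\|x-f_n(p(x))\|<1/k$, so $x\in U(f_n,V_m,1/k)$; and for any $y\in U(f_n,V_m,1/k)$ one estimates
\[
 \|y-g(p(y))\|\leq \|y-f_n(p(y))\|+\|f_n(p(y))-g(p(y))\|<1/k+1/k<a,
\]
while $p(y)\in V_m\subset V$, so $y\in U(g,V,a)\subset W$. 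Hence $x\in U(f_n,V_m,1/k)\subset W$, proving $\B$ is a base.

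The only mildly delicate step is the reduction of the base of tubes to one using bounded sections (which is why the separability hypothesis is phrased for $\Gamma_b(\xi)$ rather than $\Gamma(\xi)$), but this follows cleanly from fullness with bounded sections once $T$ is known to be paracompact; the triangle-inequality verification is then routine.
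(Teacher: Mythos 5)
Your proposal is correct and follows essentially the same route as the paper: metrizability of $T$ via Urysohn plus Proposition \ref{P:cr} for (complete) regularity of $\E$, and then the countable family of tubes $U(f_n,V_m,1/k)$ built from a dense sequence in $\Gamma_b(\xi)$ and a countable base of $T$, verified by the same triangle-inequality estimate. The only difference is that you spell out the reduction from arbitrary tubes to tubes over bounded sections, which the paper absorbs into its citation of fullness.
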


\begin{proof}

   Remark first that $T$ is a metrizable space therefore, by Proposition \ref{P:cr}, $\E$ is completely regular. Now, say that $\{V_m\}$ is a
   countable base of $T$ and $\{\varphi_n\}$ is a sequence dense in $\Gamma_b(\xi)$. For $m,n,k\in \mathbb{N}$ denote
   \[
    \U(m,n,k) := \{x\in \E\mid p(x)\in V_m,\ \|x - \varphi_n(p(x))\| < 1/k.
   \]
   We shall show that $\{\U(m,n,k)\}$ is a base of $\E$. Let $U\subset \E$ be open and $x_0\in U$. The Banach bundle $\xi$ is full so by \cite[p. 10]{DG}
   there exist $m\in \mathbb{N}$, $\varphi\in \Gamma_b(\xi)$, and $\delta > 0$ such that $p(x_0)\in V_m$, $\varphi(p(x_0) = x_0$, and
   \[
    U^{\pr} := \{x\in \E\mid p(x)\in V_m,\ \|x - \varphi(p(x))\| < \delta\}\subset U.
   \]
   Choose $k\in \mathbb{N}$ such that $1/k < \delta/2$ and
   $\varphi_n$ such that $\|\varphi - \varphi_n\| < 1/k$. It is now routine to check that $x_0\in U^{\pr}\subset \U(m,n,k)$.

\end{proof}

\bibliographystyle{amsplain}
\bibliography{}

\end{document}